\theoremstyle{definition}
\newtheorem{df}{Definition} [section]
\newtheorem{note}[df]{Note}
\newtheorem{remark}[df]{Remark}
\theoremstyle{plain}
\newtheorem{thm}[df]{Theorem}
\newtheorem{lemma}[df]{Lemma}
\newtheorem{fact}[df]{Fact}
\newtheorem{obs}[df]{Observation}
\pgfplotsset{compat=1.14}
\title{An Improved Upper Bound for the size of the Sphere of Influence Graph}
\author{Dan Ismailescu}
\address{Department of Mathematics, Hofstra University,
Hempstead, NY 11549, USA.}
\email{dan.p.ismailescu@hofstra.edu}
\author{Sung Hoon Kim}
\address{Bergen Catholic High School,1040 Oradell Avenue,
Oradell, NJ 07649, USA.}
\email{kelvin2kim@gmail.com}
\author{Taeyang David Park}
\address{Peddie School, 201 South Main Street,
Hightstown, NJ 08520, USA.}
\email{taeyangpark0801@gmail.com}
\begin{document}


\thispagestyle{empty}
\begin{abstract}

Let $V$ be a set of $n$ points in the plane. For each $x\in V$, let $B_x$ be the closed circular disk centered at $x$ with radius equal to the distance from $x$ to its closest neighbor. The {\it closed sphere of influence graph} on $V$ is defined as the undirected graph where $x$ and $y$ are adjacent if and only if the $B_x$ and $B_y$ have nonempty intersection.

It is known that every $n$-vertex closed sphere of influence graph has at most $cn$ edges, for some absolute positive constant $c$.
The first result was obtained in 1985 by Avis and Horton who provided the value $c=29$. Their result was successively improved by several authors: Bateman and Erd\H{o}s (c=18), Michael and Quint (c=17.5), and Soss (c=15).

In this paper we prove that one can take $c=14.5$.
\end{abstract}

\maketitle

\thispagestyle{empty}
\pagenumbering{arabic}

\section{\bf Introduction}
Let $V$ be a set of $n$ points in the plane. For each $x\in V$, let $B_x$ be the closed ball centered at $x$ with radius equal to the distance from $x$ to the nearest neighbor. We refer to these balls as the {\it spheres of influence} of the set $V$. The {\it open sphere of influence graph} on $V$ is defined as the undirected graph where $\{x,y\}$ is an edge if and only if the {\it interiors} of $B_x$ and $B_y$ have nonempty intersection. The {\it closed sphere of influence graph} is defined similarly; this time for $\{x,y\}$ to be an edge requires that $B_x$ and $B_y$ have nonempty intersection.

\begin{figure}\label{csig}
\definecolor{ffqqqq}{rgb}{1.,0.,0.}
\definecolor{ududff}{rgb}{0.30196078431372547,0.30196078431372547,1.}
\begin{tikzpicture}[line cap=round,line join=round,>=triangle 45,x=1.0cm,y=1.0cm,scale = 0.4]
\clip(-2,-10) rectangle (24.5,8.5);
\draw [line width=1.pt] (5.130791152195217,2.9236128828727286) circle (5.298346130874559cm);
\draw [line width=1.pt] (2.1208838183275462,-2.123302444622555) circle (5.876299556811006cm);
\draw [line width=1.pt] (7.806264337855369,-6.258124640642787) circle (6.2882263865225925cm);
\draw [line width=1.pt] (11.059396506783054,-0.8767751649399849) circle (4.4889645668944445cm);
\draw [line width=1.pt] (10.390528210368016,3.5620780749052643) circle (4.4889645668944445cm);
\draw [line width=1.pt] (17.413645322725916,3.9573184318777868) circle (5.464614434189294cm);
\draw [line width=1.pt] (16.92719565260589,-5.893287388052767) circle (6.081380906610993cm);
\draw [line width=1.pt] (20.180327821533574,-0.755162747409978) circle (5.464614434189294cm);
\draw [line width=2.pt,color=ffqqqq] (2.1208838183275462,-2.123302444622555)-- (5.130791152195217,2.9236128828727286);
\draw [line width=2.pt,color=ffqqqq] (2.1208838183275462,-2.123302444622555)-- (10.390528210368016,3.5620780749052643);
\draw [line width=2.pt,color=ffqqqq] (2.1208838183275462,-2.123302444622555)-- (11.059396506783054,-0.8767751649399849);
\draw [line width=2.pt,color=ffqqqq] (2.1208838183275462,-2.123302444622555)-- (7.806264337855369,-6.258124640642787);
\draw [line width=2.pt,color=ffqqqq] (5.130791152195217,2.9236128828727286)-- (11.059396506783054,-0.8767751649399849);
\draw [line width=2.pt,color=ffqqqq] (5.130791152195217,2.9236128828727286)-- (10.390528210368016,3.5620780749052643);
\draw [line width=2.pt,color=ffqqqq] (5.130791152195217,2.9236128828727286)-- (7.806264337855369,-6.258124640642787);
\draw [line width=2.pt,color=ffqqqq] (10.390528210368016,3.5620780749052643)-- (11.059396506783054,-0.8767751649399849);
\draw [line width=2.pt,color=ffqqqq] (10.390528210368016,3.5620780749052643)-- (17.413645322725916,3.9573184318777868);
\draw [line width=2.pt,color=ffqqqq] (17.413645322725916,3.9573184318777868)-- (11.059396506783054,-0.8767751649399849);
\draw [line width=2.pt,color=ffqqqq] (17.413645322725916,3.9573184318777868)-- (20.180327821533574,-0.755162747409978);
\draw [line width=2.pt,color=ffqqqq] (17.413645322725916,3.9573184318777868)-- (16.92719565260589,-5.893287388052767);
\draw [line width=2.pt,color=ffqqqq] (20.180327821533574,-0.755162747409978)-- (16.92719565260589,-5.893287388052767);
\draw [line width=2.pt,color=ffqqqq] (16.92719565260589,-5.893287388052767)-- (7.806264337855369,-6.258124640642787);
\draw [line width=2.pt,color=ffqqqq] (16.92719565260589,-5.893287388052767)-- (11.059396506783054,-0.8767751649399849);
\draw [line width=2.pt,color=ffqqqq] (7.806264337855369,-6.258124640642787)-- (11.059396506783054,-0.8767751649399849);
\draw [line width=2.pt,color=ffqqqq] (7.806264337855369,-6.258124640642787)-- (10.390528210368016,3.5620780749052643);
\draw [line width=2.pt,color=ffqqqq] (11.059396506783054,-0.8767751649399849)-- (20.180327821533574,-0.755162747409978);
\begin{scriptsize}
\draw [fill=ududff] (5.130791152195217,2.9236128828727286) circle (2.5pt);
\draw [fill=ududff] (11.059396506783054,-0.8767751649399849) circle (2.5pt);
\draw [fill=ududff] (10.390528210368016,3.5620780749052643) circle (2.5pt);
\draw [fill=ududff] (17.413645322725916,3.9573184318777868) circle (2.5pt);
\draw [fill=ududff] (20.180327821533574,-0.755162747409978) circle (2.5pt);
\draw [fill=ududff] (16.92719565260589,-5.893287388052767) circle (2.5pt);
\draw [fill=ududff] (7.806264337855369,-6.258124640642787) circle (2.5pt);
\draw [fill=ududff] (2.1208838183275462,-2.123302444622555) circle (2.5pt);
\end{scriptsize}
\end{tikzpicture}
\caption{A sphere of influence graph}
\end{figure}
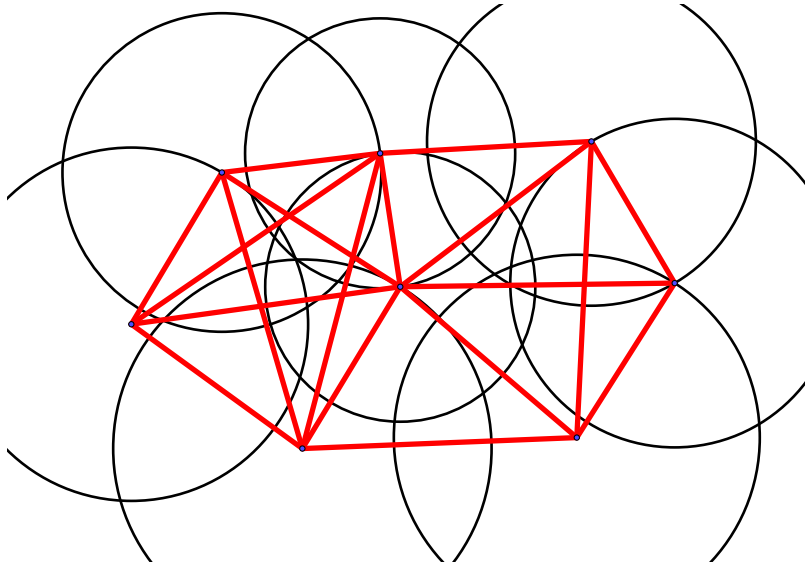

We will call these graphs OSIG or CSIG, respectively, depending on whether they are open or closed. If the distinction is irrelevant we will just call them SIG.
Sphere of influence graphs were introduced by Toussaint \cite{toussaint80} in 1980, as a type of proximity graph to model situations in pattern recognition and computer vision. The main question is to find a characterization of those graphs which can be realized as a SIG. This is a very difficult problem because an {\it induced subgraph} of a SIG need not be a SIG. In other words, being a SIG is not a hereditary property. For example, Jacobson, Lipman and McMorris proved \cite{JLM} that a tree is a CSIG if and only if it has a perfect matching. In particular, trees with an odd number of vertices cannot be CSIG. It is also easy to prove that the claw $K_{1,3}$ is not an OSIG, although the graph obtained by appending an edge to a leaf is an OSIG, see \cite{HJLM}.

Falling short of finding such a characterization, we may ask some easier questions. The two main open problems are
\begin{itemize}
\item{{\bf Problem 1.} What is the maximum number of edges of a SIG on $n$ vertices?}
\item{{\bf Problem 2.} What complete graphs can be realized as a SIG?}
\end{itemize}
Note that a SIG need not be a planar graph. For instance, the SIG shown in figure \ref{csig} contains a complete graph $K_5$. Toussaint \cite {toussaint80} asked whether any SIG has at most a linear number of edges. This was confirmed five years later by Avis and Horton \cite{AH}, who showed that the vertex corresponding to the smallest ball in any SIG has degree at most $29$. By induction, it follows that no SIG can contain more than $29n$ edges. In particular, no complete graph $K_{n}$ with $n\ge 31$ can be a SIG.

It was later realized that a stronger result had been proven (in a different form) more than thirty years earlier by Reifenberg \cite{R}, and independently by Bateman and Erd\H{o}s \cite {BE}. They showed that the vertex corresponding to the smallest ball in any SIG has degree at most $18$. Again, by induction it can be shown that no sphere of influence graph on $n$ vertices contains more than $18n$ edges. This is true for both OSIG and CSIG. Since a CSIG has at least as many edges as the corresponding OSIG, any upper bound for the size of the former is also going to be an upper bound for the size of the latter. From now on we restrict ourselves to studying CSIGs.

Michael and Quint \cite{MQ} further reduced this bound to $17.5n$ with the following beautiful argument. Let $x$ be the vertex with the smallest sphere of influence, of radius $r$. This sphere has radius $r$ because the nearest neighbor of $x$, say $y$, is distance $r$ away. Since the nearest neighbor relation is reflexive, $x$ is the nearest neighbor of $y$. Thus the spheres of influence of both $x$ and $y$ have radius $r$, the smallest radius over all spheres, so $x$ and $y$ each have at most $18$ neighbors. One edge is $\{x,y\}$ itself, and we have at most $35$ other edges. Performing induction on two vertices at a time instead of one yields a bound of $35n/2$ edges, or $17.5n$.

The best bound known today was established by Soss \cite{soss} in his 1998 Masters thesis.
\begin{thm}\cite{soss}\label{sossresult}
Every closed sphere of influence graph of order $n$ contains at most $15n$ edges.
\end{thm}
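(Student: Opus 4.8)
The plan is to follow the peeling strategy of Bateman--Erd\H{o}s and Michael--Quint and to reduce the whole statement to a single local estimate. By induction on $n$, it suffices to exhibit a vertex whose removal deletes at most $15$ edges; the natural candidate, as in all the earlier work, is the vertex $x$ for which $r_x$ is smallest, where $r_v$ denotes the radius of $B_v$, since then every neighbour $y$ of $x$ satisfies $r_y\ge r_x$ and the induction hypothesis handles the remaining $n-1$ vertices. (To shave the constant from $c$ to $(2c-1)/2$ one peels instead the mutual nearest neighbour pair $\{x,y\}$, both of minimal radius, exactly as Michael--Quint do; I return to this at the end.) So fix such an $x$, rescale so that $r_x=1$ and place $x$ at the origin; every other vertex then has radius $\ge 1$.

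Two elementary inequalities drive the geometry. If $y$ is a neighbour of $x$ and $d=|xy|$, then $B_x\cap B_y\neq\emptyset$ gives $d\le 1+r_y$, while $r_y\le d$ because the nearest neighbour of $y$ is no farther than $x$; hence $1\le r_y\le d\le 1+r_y$, i.e.\ $d-1\le r_y\le d$. If moreover $z$ is a second neighbour of $x$ with $|xy|=s\le|xz|=t$, then $|yz|\ge\max(r_y,r_z)\ge r_z\ge t-1$, so the law of cosines yields
\[
\cos\angle yxz\ \le\ \frac{s^{2}+t^{2}-(t-1)^{2}}{2st}\ =\ \frac{s^{2}+2t-1}{2st}.
\]
Partition the neighbours of $x$ into an \emph{outer} group $\{\,y:|xy|\ge\lambda\,\}$ and an \emph{inner} group $\{\,y:1\le|xy|<\lambda\,\}$ for a suitable constant $\lambda$ (around $2$), and bound the two groups separately.

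For the outer group the displayed bound, applied to neighbours lying in the same or in nested annuli, forces a definite positive angular gap between angularly consecutive outer neighbours; since these gaps sum to $2\pi$ only a bounded number fit, and subdividing the outer region into geometric annuli $[2^{k},2^{k+1})$ and accounting for the angular mass carefully --- as in \cite{AH,BE,MQ} --- gives the outer count with room to spare. This part is essentially bookkeeping once $\lambda$ and the annulus widths are chosen well. The inner group is the crux, and this is where one must beat Michael--Quint. When $|xy|$ is near $1$ the ball $B_y$ has radius near or above $1$, dwarfing $B_x$, and the cosine bound degrades to the useless $\cos\angle yxz\le 1$, so angular separation alone is worthless there. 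One first notes that the inner centres are pairwise at distance $\ge 1$ (because $|yz|\ge\max(r_y,r_z)\ge 1$) and all lie in the bounded annulus $1\le|xy|<\lambda$, so a packing estimate already caps their number. To push the count down to what is needed one must then exploit the finer structure: every $B_y$ contains the point $p_y\in\partial B_x$ in the direction of $y$ (since $|p_y-y|=|xy|-1\le r_y$); the balls $B_y$ are pairwise non-nested ($B_y\subseteq B_z$ would force $r_z\le|yz|<|yz|+r_y\le r_z$, absurd); and the lens $B_x\cap B_y$ hugs $\partial B_x$ near $p_y$. A careful case analysis that tracks these lenses together with the pairwise distances among inner centres yields the sharp inner bound. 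I expect this case analysis to be the main obstacle: it is precisely here, in the cramped innermost region, that the saving over the $17.5$ of Michael--Quint has to be extracted.

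Adding the outer and inner bounds gives $\deg x\le 15$ for the smallest-radius vertex, and then induction on $n$ finishes: removing $x$ destroys at most $15$ edges and leaves an $(n-1)$-vertex CSIG with at most $15(n-1)$ edges. If the two bounds happen to sum to a little more than $15$, one instead runs the same outer/inner analysis around a mutual nearest neighbour pair $\{x,y\}$, whose two inner regions overlap so that the neighbour lists of $x$ and $y$ interfere and cannot both be as large as the single-vertex analysis permits, shows that at most $30$ edges are incident to $\{x,y\}$, and concludes $|E|\le 30n/2=15n$ by peeling two vertices at a time. The base cases $n\le 2$ are immediate.
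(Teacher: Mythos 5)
Your reduction rests on the local claim that the vertex $x$ of smallest radius satisfies $\deg x\le 15$ (or, in the paired version, that a mutual nearest-neighbour pair meets at most $30$ edges). That claim is false, and no refinement of the inner/outer case analysis can rescue it: in a large piece of the triangular lattice with minimum distance $1$, every sphere of influence has radius $1$, so two points are CSIG-adjacent iff their distance is at most $2$, and an interior vertex --- which is also a vertex of minimal radius --- has exactly $18$ neighbours (six at distance $1$, six at $\sqrt3$, six at $2$). This is precisely why the Reifenberg/Bateman--Erd\H{o}s bound of $18$ on the degree of the minimal vertex is optimal \cite{R,BE}, and why peeling a mutual nearest-neighbour pair, which meets $18+18-1=35$ edges in that example, cannot give anything better than $35n/2=17.5n$ \cite{MQ}. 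Unweighted degree-peeling at extremal vertices therefore stalls at $17.5n$; the constant $15$ is simply not reachable along the route you propose, however sharp the inner-region packing argument is made.

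The idea you are missing is Soss's weighting, which is also the engine of the present paper. Replace each edge $\{a,b\}$ by two directed edges and set $w(a,b)=1$, $1/2$, or $0$ according as $r_b/r_a>p$, $q\le r_b/r_a\le p$, or $r_b/r_a<q$ (with $q=1/p$), so that $w(a,b)+w(b,a)=1$ and the edge count equals the total outgoing weight summed over \emph{all} vertices --- no extremal vertex and no induction are needed. One then bounds the outgoing weight at an arbitrary vertex: neighbours of comparable radius are discounted to $1/2$, which is exactly what defuses the triangular-lattice obstruction (there every directed edge has weight $1/2$ and the outgoing weight is only $9$). After normalising $r_O=1$, the weight-$1$ neighbours have centres in $\rho\ge p$ pairwise $\ge p$ apart, the weight-$1/2$ neighbours have centres in the annulus $1\le\rho\le 1+p$ pairwise $\ge q$ apart, and a radial-projection lemma confines everything to a bounded annulus (Theorem \ref{weight}); angular-packing estimates in the spirit of your outer-group argument then bound the total weight by $15$ (Soss, with $p=3/2$) or $14.5$ (here, with $p=1.409$). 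Your geometric inequalities $d-1\le r_y\le d$ and the law-of-cosines bound are exactly the right tools for that final packing step, but they must be applied to the weighted configuration, not to the raw degree of a minimal vertex.
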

Soss' crucial idea was to introduce a {\it weighted} graph associated with the CSIG. We will present his approach later on.
What about the lower bound? It is easy to see that the hexagonal lattice has $18$ neighbors per vertex, for $9n$ edges in total.
Avis and Horton \cite {AH} conjectured that this is the largest number of edges possible for a CSIG.

Clearly, every single answer to the first problem translates into an answer for the second. For instance, Soss' result implies that
no complete graph $K_n$ with $n\ge 17$ is a CSIG. K\'{e}zdy and Kubicki \cite{KK} proved that $K_{12}$ is not a CSIG. From the other side,
it is known that every complete graph $K_n$ with $n\le 8$ can be realized as an OSIG. Harary et al. \cite{HJLM} conjectured that the smallest complete graph that is not a CSIG is $K_9$.

The problem was generalized to Euclidean spaces of arbitrary dimension by Guibas, Pach and Sharir \cite{GPS}. Among other things, they proved that the sphere of influence graph on $n$ vertices in $\mathbf{R}^d$ contains at most $5^d\cdot n$ edges.
In this paper we will improve Soss' 1998 result stated in Theorem \ref{sossresult}. We will prove the following
\begin{thm}\label{main}
Every closed sphere of influence graph of order $n$ contains at most $14.5n$ edges.
\end{thm}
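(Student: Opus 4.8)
The plan is to refine the weighted-graph approach of Soss combined with the "induct on two vertices at a time" idea of Michael and Quint. The key quantity to control is the degree of the vertex (or pair of vertices) carrying the smallest sphere of influence. As in the earlier proofs, if $x$ has the smallest radius $r$, then its nearest neighbor $y$ also has radius $r$, so we get to analyze a \emph{pair} of minimum-radius vertices simultaneously; removing both from the graph and bounding the number of incident edges gives a recursion $e(n)\le e(n-2) + (\text{bound on incident edges})$, hence $e(n)\le \tfrac12(\text{bound})\cdot n$. To beat $15n$ we must show that $x$ and $y$ together are incident to at most $29$ edges (counting $\{x,y\}$ once).

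First I would set up the geometry: normalize so that $|xy| = r = 1$ and both $B_x$, $B_y$ are unit disks. Every neighbor $z$ of $x$ has a disk $B_z$ of radius $\ge 1$ meeting $B_x$, so its center lies within distance $2$ of $x$ but (since $x$'s nearest neighbor is at distance exactly $1$, and similarly no point is closer than $1$ to any vertex whose radius would then be $<1$) the centers of distinct neighbors are reasonably separated. The standard packing argument bounds $\deg(x)$ by counting how many points can lie in the annulus-like region around $x$ subject to the separation constraints; the Bateman--Erd\H{o}s bound of $18$ comes from a careful angular argument showing two neighbors of $x$ on the "same side" subtend an angle bounded below. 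The improvement will come from a \emph{global} accounting: a neighbor $z$ that is close to $x$ forces $B_z$ to be large relative to its distance, which in turn restricts where \emph{other} neighbors of $x$, or neighbors of $y$, can sit. I would partition the neighbors of $\{x,y\}$ by angular sectors around the segment $xy$ and charge overlaps between the "influence cones" of $x$ and of $y$ against the total count.

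The main technical step is a careful case analysis on the angular positions of neighbors, organized around a small number of \emph{forbidden configurations}: one shows that certain combinations of three or four neighbors at small mutual angles are geometrically impossible because they would violate the nearest-neighbor minimality of $r$. I expect this to require splitting into the cases according to how many neighbors lie in the two half-planes determined by line $xy$, and within each half-plane, a continuous optimization (Lagrange-multiplier or monotonicity argument) showing that the worst case packs neighbors at the boundary of the feasible region, where the count can be evaluated explicitly. Soss' weighting scheme can be invoked to distribute the "cost" of an edge $\{x,z\}$ between its endpoints so that vertices of larger radius absorb more, sharpening the recursion; I would re-derive his weighted inequality and then show that under our improved angular bounds the weighted degree sum over the minimal pair is at most $29$.

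The hard part will be the angular optimization: proving that no arrangement of neighbors of $x$ and $y$ simultaneously achieves the old worst case for \emph{both} vertices. The two vertices share the constraint $|xy|=1$ and each forbids points in its own unit-disk interior, and the crux is that the unit disk $B_y$ "eats into" the region where neighbors of $x$ could otherwise be packed densely (and vice versa), so the sum of the two degrees is strictly below twice the individual maximum. Making this quantitative — identifying the precise extremal configuration and verifying the bound $29$ rather than $35$ or $36$ — is where the real work lies, and where the constant $14.5$ (rather than something smaller) is forced by the best configuration we can rule in.
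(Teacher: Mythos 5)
Your plan hinges on showing that the two mutually nearest vertices $x,y$ of minimal radius are together incident to at most $29$ edges, so that removing the pair gives $e(n)\le e(n-2)+29$ and hence $14.5n$. In its unweighted form this target is false: in the triangular (hexagonal) lattice with unit minimal distance every sphere of influence has radius $1$, every vertex has exactly $18$ CSIG-neighbors (the $6+6+6$ lattice points at distances $1$, $\sqrt{3}$, $2$), and an adjacent pair of mutual nearest neighbors is incident to $18+18-1=35$ edges. This is precisely why the Michael--Quint pair argument stops at $17.5n$: no refinement of the angular packing analysis for the minimal pair alone can reach $29$, because the obstruction is a configuration that actually exists, not slack in the estimates. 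Your claim that $B_y$ ``eats into'' the region available to neighbors of $x$ and forces $\deg(x)+\deg(y)$ strictly below twice the individual maximum therefore cannot be made quantitative at the level your recursion needs.

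When you fall back on Soss' weighting, you are in effect proposing what the paper actually does, but the proposal stops exactly where the work begins, and the pair structure then buys nothing. The paper never uses the minimal pair: it fixes a parameter $p$ (ultimately $p=1.409$), gives each directed edge weight $1$, $1/2$ or $0$ according to whether the radius ratio exceeds $p$, lies in $[1/p,p]$, or is below $1/p$, and proves that the outgoing weight at \emph{every} vertex is at most $14.5$ (the hexagonal lattice is no longer extremal, since there each outgoing edge has weight $1/2$ and the out-weight is only $9$). That per-vertex statement is reduced, via a radial projection lemma, to a bounded packing problem --- weight-$1$ points in $p\le\rho\le 1+p$ pairwise at distance at least $p$, weight-$1/2$ points in $1\le\rho\le 1+p$ pairwise at distance at least $1/p$, cross distances at least $p$, total weight at most $14.5$ --- which is then settled by a lengthy case analysis (organized by the number of weight-$1$ points, $6$ through $11$, after several easy cases) using lower bounds on the angles between consecutive points in the circular ordering, with $p=1.409$ tuned so that the hardest case ($10$ points of each weight) exceeds $360^{\circ}$ by a hair. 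None of this --- the weight thresholds, the projection lemma, the choice of $p$, or the case analysis --- appears in your proposal; the ``angular optimization'' you defer is the entire content of the proof. As written, the argument rests on a false unweighted claim, and in its weighted form it restates the goal rather than proving it.
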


\section{{\bf Reducing the problem to a bounded one}}
We will prove the following stronger result.
\begin{thm}\label{weight}
Let $p\ge 1$ be a constant to be specified later. Let $\mathcal{P}_1$ be a finite set of points in the annulus $p\le \rho \le 1+p$, and let $\mathcal{P}_{1/2}$ be a finite set of points in the annulus $1\le \rho \le 1+p$ with the following properties:
\begin{itemize}
\item{$\mathcal{P}_1$ and $\mathcal{P}_{1/2}$ are disjoint.}
\item{every point in $\mathcal{P}_1$ has weight $1$, and every point in $\mathcal{P}_{1/2}$ has weight $1/2$.}
\item{the distance between any two points of weight $1$ is at least $p$; the distance between two points of weight $1/2$ is at least $q=1/p$; the distance between any two points of different weights is at least $p$.}
\end{itemize}
Then the total weight of all the points in $\mathcal{P}_1 \cup \mathcal{P}_{1/2}$ cannot exceed $14.5$.
\end{thm}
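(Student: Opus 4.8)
The plan is to pass to polar coordinates centered at the origin and reduce the estimate to a one–dimensional packing bound on the circle of directions. Write each point $P$ as $(\rho_P,\theta_P)$, so that $\rho_P\in[p,1+p]$ when $P$ has weight $1$ and $\rho_P\in[1,1+p]$ when $P$ has weight $\tfrac12$. The first ingredient is a collection of elementary angular–separation estimates: given two points $P,Q$ with $|PQ|\ge d$ and both radii at most $1+p$, the angle $\angle POQ$ is at least $2\arcsin\bigl(d/(2(1+p))\bigr)$ \emph{provided} their radii do not differ too much, and in general the admissible region in the $(\rho_P,\rho_Q)$–plane forces a quantitative trade–off between $\angle POQ$ and $|\rho_P-\rho_Q|$. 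Taking $d=p$ for every pair involving a weight‑$1$ point and $d=q=1/p$ for a pair of weight‑$\tfrac12$ points produces three separation constants $\gamma_{11},\gamma_{10},\gamma_{00}$ (explicit functions of $p$, with $\gamma_{00}\le\gamma_{10}=\gamma_{11}$) controlling consecutive points of each combination of types.

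Next I would order the points of $\mathcal{P}_1\cup\mathcal{P}_{1/2}$ by polar angle into a cyclic sequence and assign to each consecutive pair a definite amount of angular measure. The delicate point is that two points may share essentially the same direction, but only if they are radially separated by at least the relevant $d$; since the weight‑$\tfrac12$ annulus has width $p$, at most $\lfloor p^2\rfloor+1$ points can be stacked over a single direction, and once $p$ is taken below $\sqrt2$ this is at most two, while each such stack still carries a fixed minimum angular footprint away from its neighbors on either side. Combining the per–gap lower bounds and summing the gaps to $2\pi$ around the origin yields an inequality of the shape $\sum_i\gamma\bigl(\mathrm{type}(P_i),\mathrm{type}(P_{i+1})\bigr)\le 2\pi$, which bounds the weighted cardinality $|\mathcal{P}_1|+\tfrac12|\mathcal{P}_{1/2}|$ by an explicit function $g(p)$.

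The last step is to choose $p$ (the constant left unspecified in the statement) so as to minimize $g(p)$ and to verify that in each regime — all points of weight $1$, all of weight $\tfrac12$, and the mixed arrangements with their worst radial layouts — the weighted total stays at most $14.5$. I expect the extremal arrangement to be a thin (one‑ or two‑ring) cluster of weight‑$\tfrac12$ points near the outer radius $1+p$, so that the bound is essentially of the form $\pi/\gamma_{00}$ at the optimal $p$; the pure weight‑$1$ and the mixed cases, in which the large repulsion distance $p$ among and near the weight‑$1$ points is very costly, should come out strictly smaller.

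The main obstacle is this case analysis rather than any single computation: the hypotheses are asymmetric — a weight‑$1$ point repels everything at the large distance $p$, whereas a weight‑$\tfrac12$ point repels only other weight‑$\tfrac12$ points and only at the small distance $q$ — so neither the worst proportion of the two weight classes nor the worst radial placement is evident a priori, and one must in particular rule out configurations that trade a few weight‑$1$ points for extra weight‑$\tfrac12$ points squeezed into the freed angular room. Turning the informal radial–stacking count into a rigorous argument that no multi‑ring configuration beats the thin–cluster value is the technical crux; the remaining work is the estimation of $\gamma_{11},\gamma_{10},\gamma_{00}$ as functions of $p$ together with a short optimization.
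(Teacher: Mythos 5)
Your framework --- order the points by polar angle, bound each consecutive angular gap from below according to the weights of its endpoints, and sum to $2\pi$ --- is exactly the Bateman--Erd\H{o}s/Soss framework the paper also uses, so the overall shape of the plan is right. But there are two genuine gaps. First, the type-dependent constants you posit do not exist as positive numbers: a weight-$1$ point at $\rho=1+p$ and a weight-$1/2$ point at $\rho=1$ in the same direction are at distance exactly $p$, and two weight-$1/2$ points at $\rho=1$ and $\rho=1+q$ in the same direction are at distance exactly $q$, so over the full annuli one has $\gamma_{10}=\gamma_{00}=0$ and your claim $\gamma_{00}\le\gamma_{10}=\gamma_{11}$ with all three positive is false. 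Your radial-stacking remark addresses only the weight-$1/2$ versus weight-$1/2$ degeneracy, is not quantified, and says nothing about the mixed pairs; to make any gap bound positive one must first restrict the radii to sub-annuli, and for weight-$1/2$ pairs one additionally needs a radial-projection argument capping the effective outer radius at $1+q$ (the paper's Lemma \ref{radprojlemma} and Lemma \ref{two}).

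Second, and more seriously, even after these repairs a single inequality of the form $\sum_i\gamma(\mathrm{type}(P_i),\mathrm{type}(P_{i+1}))\le 2\pi$ with uniform constants only reproduces Soss's bound of $15$ --- that is literally what Soss's proof is, with $p=3/2$. The content of the improvement to $14.5$ is an iterative refinement carried out separately for each split $(n_1,n_{1/2})$ of total weight $15$ (from $6+18$ up to $11+8$, plus the easy extreme cases): one first shows that a large number of the weight-$1/2$ points are forced into a thin inner annulus such as $1\le\rho\le 1.2$; this forces every weight-$1$ point out to an annulus such as $1.85\le\rho\le 1+p$; that in turn forces still more weight-$1/2$ points inward; and after several such rounds the angle count exceeds $360^{\circ}$. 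The binding case is the mixed split $10+10$, where every intermediate bound clears $360^{\circ}$ by only a few thousandths of a degree and the argument works only for $p=1.409$ (not $1.408$ or $1.410$). Your expectation that the extremal arrangement is a thin cluster of weight-$1/2$ points near the outer radius $1+p$ points the analysis in the wrong direction: the dangerous configurations push the weight-$1/2$ points toward $\rho=1$, and the pure weight-$1/2$ case tops out at total weight $13$, well below the bound. Without this case analysis and the tuned choice of $p$, the proposal does not reach $14.5$.
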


In this section we combine the reasoning of Bateman and Erd\H{o}s \cite{BE} with that of Soss \cite{soss} to prove that Theorem \ref{weight} implies Theorem \ref{main}. Let $p\ge 1$ be a constant to be chosen later, and denote $q=1/p$. We start by assigning weights to the edges of the CSIG as follows. First, we replace each undirected edge $\{a,b\}$ with two directed edges
$(a,b)$ and $(b,a)$. Let the radii of the spheres of influence of $a$ and $b$ be $r_a$ and $r_b$. Then define $w(a,b)$, the weight of $(a,b)$, as

\[  w(a,b)= \left\{
\begin{array}{ll}
      1 &  \quad \text{if}\quad p< r_b/r_a \\
      1/2 &  \quad \text{if}\quad q\le r_b/r_a\le p\\
      0 &  \quad \text{if}\quad r_b/r_a<q\\
\end{array}
\right. \]
We refer to this graph as the {\it weighted sphere of influence graph}, or WSIG.
\begin{obs}
For any point set $V$, the total weight of all the directed edges in the WSIG of $V$ is equal to the number of edges in the CSIG of $V$.
\end{obs}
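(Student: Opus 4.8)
The plan is to establish the identity edge by edge: I will show that for every undirected edge $\{a,b\}$ of the CSIG, the two directed copies $(a,b)$ and $(b,a)$ that replace it satisfy $w(a,b)+w(b,a)=1$. Since the directed edges of the WSIG are precisely these pairs, one pair for each CSIG edge, summing this local identity over all edges immediately yields that the total directed weight equals the number of CSIG edges. The entire statement is therefore a symmetry computation, with no packing or geometric input required beyond the definition of the weights.

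To prove the local identity, I would set $t=r_b/r_a$, so that the reciprocal ratio is $r_a/r_b=1/t$; the weight $w(a,b)$ is read off from $t$ and the weight $w(b,a)$ from $1/t$. The key structural fact to exploit is that the two thresholds are reciprocals of one another, namely $q=1/p$, so that the three defining regimes are interchanged under $t\mapsto 1/t$. Concretely: if $t>p$ then $1/t<1/p=q$, so $w(a,b)=1$ and $w(b,a)=0$; if $q\le t\le p$ then taking reciprocals gives $q=1/p\le 1/t\le 1/q=p$, so both weights equal $1/2$; and if $t<q$ then $1/t>1/q=p$, so $w(a,b)=0$ and $w(b,a)=1$. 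In each of the three cases the two directed weights sum to exactly $1$.

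The only point requiring any care is the treatment of the boundary values $t=p$ and $t=q$, but these land in the middle regime from both sides because the defining inequalities for weight $1/2$ are non-strict; hence each boundary case still contributes $1/2+1/2=1$. There is thus no genuine obstacle here, merely a bookkeeping check that every ordered pair of an edge is counted once and that the reciprocal relations hold exactly because $q=1/p$. Finally, summing the identity $w(a,b)+w(b,a)=1$ over all edges of the CSIG gives total directed weight equal to the edge count, which completes the proof.
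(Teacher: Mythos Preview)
Your proof is correct and follows exactly the same approach as the paper: the paper simply asserts that the observation ``is immediate since for any two adjacent vertices $a$ and $b$ in the CSIG of $V$ we have that $w(a,b)+w(b,a)=1$,'' and you have spelled out the three-case verification of that identity using $q=1/p$. Your treatment is just a more detailed version of the paper's one-line justification.
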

This is immediate since for any two adjacent vertices $a$ and $b$ in the CSIG of $V$ we have that $w(a,b)+w(b,a)=1$.
The above observation implies that if we can prove that no WSIG on $n$ vertices has edges whose total weight is greater than $14.5n$, then we have also proven Theorem \ref{main}. This is exactly the method behind our proof, which we state below for future reference.
\begin{thm}\label{point}
There exists no node in the WSIG for which the weights of the outgoing edges sum to more than $14.5$.
\end{thm}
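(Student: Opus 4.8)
The plan is to reduce Theorem~\ref{point} to Theorem~\ref{weight}. Fix a node $a$ of the WSIG; I will attach to it two finite point sets $\mathcal P_1$ and $\mathcal P_{1/2}$ satisfying the hypotheses of Theorem~\ref{weight} and for which $|\mathcal P_1|+\tfrac12|\mathcal P_{1/2}|$ equals the total weight of the edges leaving $a$. The bound $14.5$ is then immediate.

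Normalise so that $r_a=1$ (all distances, the ratios $r_b/r_a$, and hence all edge weights are scale-invariant). Only the CSIG-neighbours of $a$ carry positive outgoing weight; among them $w(a,b)=1$ exactly when $r_b>p$ and $w(a,b)=\tfrac12$ exactly when $q\le r_b\le p$. I will use three elementary facts: $|ab|\ge r_a=1$ for every vertex $b\ne a$ (the nearest neighbour of $a$ lies at distance $1$); $|ab|\ge r_b$ (since $a\notin\operatorname{int}B_b$); and $|ab|\le 1+r_b$ for every CSIG-neighbour $b$ (since $B_a\cap B_b\ne\emptyset$); moreover $|bc|\ge\max(r_b,r_c)$ for any two neighbours $b,c$ of $a$.

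Let $\mathcal P_{1/2}$ be the set of positions of the weight-$1/2$ neighbours. The first and third facts together with $r_b\le p$ place each such point in the annulus $1\le\rho\le 1+p$, and $|bc|\ge r_b\ge q$ gives pairwise separation at least $q$. For the weight-$1$ neighbours the actual positions will not do, since $|ab|$ can then be arbitrarily large; instead I replace each weight-$1$ neighbour $b$ by its radial projection $b'$ onto the circle $\rho=1+p$, i.e.\ the point of the ray $ab$ at distance $\min(|ab|,1+p)$ from $a$, and let $\mathcal P_1$ be the resulting set. Since $|ab|\ge r_b>p$, every $b'$ lies in the annulus $p\le\rho\le 1+p$. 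What remains is to check, in the final configuration, the separations that involve a projected point: $|b'c|\ge p$ when $c$ is a weight-$1/2$ neighbour, and $|b'c'|\ge p$ when $c$ is another weight-$1$ neighbour (projected to $c'$). Since $p>0$, these bounds also make all the points distinct, so $\mathcal P_1\cap\mathcal P_{1/2}=\emptyset$ and the counting identity above holds.

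This verification is the crux, and the step I expect to give the most trouble, because radial projection onto the disk of radius $1+p$ is distance non-increasing, so the trivial estimate $|b'c'|\le|bc|$ is useless. The missing ingredient is the inequality $r_b\ge|ab|-1$: if $b$ is actually moved, that is $|ab|>1+p$, then $b$ is ``far'' strongly enough to keep the angle $\theta=\angle bac$ from being small. Quantitatively, setting $u=|ab|\ge v=|ac|$ (the ordering being automatic or WLOG in the two cases), the bound $|bc|^2\ge r_b^2\ge(u-1)^2$ together with the law of cosines gives $\cos\theta\le\frac{v^2+2u-1}{2uv}$. Inserting this into $|b'c|^2=(1+p)^2+v^2-2(1+p)v\cos\theta$ and using $v\ge 1$ and $u>1+p$ simplifies to $|b'c|^2\ge p^2+(v^2-1)\bigl(1-\tfrac{1+p}{u}\bigr)\ge p^2$ when $c$ is not moved; when $c$ is also projected, one checks that $\frac{v^2+2u-1}{2uv}\le\frac{p^2+4p+2}{2(1+p)^2}$ for all $u\ge v>1+p$ (the supremum being approached as $u,v\to 1+p$), whence $|b'c'|^2=2(1+p)^2(1-\cos\theta)\ge p^2$. (If a weight-$1$ neighbour $b$ is not moved, $|b'c|=|bc|\ge r_b>p$ directly.) With all hypotheses of Theorem~\ref{weight} satisfied, it yields $|\mathcal P_1|+\tfrac12|\mathcal P_{1/2}|\le 14.5$, i.e.\ the weights of the edges out of $a$ sum to at most $14.5$.
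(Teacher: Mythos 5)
Your proposal is correct and follows essentially the same route as the paper: keep the weight-$1/2$ centers in place, radially project the far weight-$1$ centers onto $\rho=1+p$, and verify via the law of cosines (using $r_b\ge|ab|-1$ from the intersection condition) that all separations of at least $p$ survive the projection. Your inline computation is precisely the content of the paper's Radial Projection Lemma (Lemma~\ref{radprojlemma}) specialized to $R=1+p$, including the same bound $\cos\theta\le\frac{1}{v}+\frac{v^2-1}{2uv}$ and the resulting $(R-1)^2$ lower bound on the squared distance.
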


We will next show how Theorem \ref{point} can be obtained from Theorem \ref{weight}. We require the following lemma,  whose first part appears in \cite{BE}. The second part is very similar, and this is most likely the reason Bateman and Erd\H{o}s left it out.
\begin{lemma}{\bf The radial projection lemma.}\label{radprojlemma}
In polar coordinates, let $X=(x,\theta_x)$ and $Y=(y,\theta_y)$ be the centers of two circles of radii $r_x$ and $r_y$, such that they do not contain each other's centers but they both intersect the unit circle $\rho=1$. Let $R>1$ be a fixed number.

(a) If $X$ and $Y$ lie outside the disc $\rho\le R$, then the points $A=(R,\theta_x)$ and $B=(R,\theta_y)$ are at least distance $R-1$ apart.

(b) If $X$ lies inside the annulus $1\le \rho\le R$ and $Y$ lies outside the disc $\rho\le R$, then points $X=(x,\theta_x)$ and $B=(R,\theta_y)$ are at least distance $R-1$ apart.
\end{lemma}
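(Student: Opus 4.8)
The plan is to reduce both statements to a single inequality on the angle $\phi=\angle XOY$ between the two rays from the origin, and then close it by elementary monotonicity. First I would extract the two facts the hypotheses really give. Since the circle of radius $r_x$ about $X$ meets the unit circle, there is a point $P$ with $|OP|=1$ lying within distance $r_x$ of $X$; hence $r_x\ge |PX|\ge |OX|-|OP| = x-1$, and likewise $r_y\ge y-1$. The assumption that neither circle contains the other's center gives $|XY|\ge r_x$ and $|XY|\ge r_y$, so
\[
|XY|\ge x-1\qquad\text{and}\qquad |XY|\ge y-1 .
\]
By the law of cosines in triangle $XOY$ we have $|XY|^2 = x^2+y^2-2xy\cos\phi$, so (dividing by $2xy>0$) these two inequalities are equivalent to
\[
\cos\phi\le \frac{y^2+2x-1}{2xy},\qquad \cos\phi\le \frac{x^2+2y-1}{2xy}.
\]

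For part (a), the radial projections $A=(R,\theta_x)$ and $B=(R,\theta_y)$ lie on the circle $\rho=R$ at angular separation $\phi$, so $|AB|^2 = 2R^2(1-\cos\phi)$, and the target $|AB|\ge R-1$ is equivalent to $\cos\phi\le \frac{R^2+2R-1}{2R^2}$. It therefore suffices to check that, for $x,y\ge R$, at least one of the two displayed upper bounds for $\cos\phi$ is $\le \frac{R^2+2R-1}{2R^2}$. Assuming by symmetry that $x\le y$, one sees $x^2+2y-1\le y^2+2x-1$ (because $(y-x)(x+y-2)\ge 0$), so the relevant bound is $\frac{x^2+2y-1}{2xy}=\frac{x^2-1}{2xy}+\frac1x$. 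Since $x\ge R>1$, this is decreasing in $y$, hence at most its value $\frac{x^2+2x-1}{2x^2}=\frac12+\frac1x-\frac1{2x^2}$ at $y=x$; this last expression is decreasing in $x$ for $x>1$, hence at most its value $\frac{R^2+2R-1}{2R^2}$ at $x=R$. Chaining these gives $\cos\phi\le \frac{R^2+2R-1}{2R^2}$, as required.

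For part (b), $B=(R,\theta_y)$ and $X=(x,\theta_x)$ satisfy $\angle XOB=\phi$, so $|XB|^2 = x^2+R^2-2xR\cos\phi$, and $|XB|\ge R-1$ is equivalent to $\cos\phi\le \frac{x^2+2R-1}{2xR}$. Here I only need the bound $\cos\phi\le \frac{x^2+2y-1}{2xy}=\frac{x^2-1}{2xy}+\frac1x$ coming from $|XY|\ge r_y\ge y-1$; since $x\ge 1$, this is nonincreasing in $y$, so for $y\ge R$ it is at most its value at $y=R$, which is exactly $\frac{x^2+2R-1}{2xR}$. This finishes (b); note it used strictly fewer hypotheses than (a).

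I expect the only genuine subtlety to be in part (a): one must first pick the correct one of the two available cosine bounds — which one is tighter depends on whether $x\le y$ or $x\ge y$ — before the two-step monotonicity argument applies cleanly. The remaining ingredients (translating "meets the unit circle" into $r\ge x-1$, rewriting distances via the law of cosines, and the monotonicity checks, all of which need only $x,y\ge R>1$ or $x\ge 1$) are routine.
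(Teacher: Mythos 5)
Your proposal is correct and follows essentially the same route as the paper: both convert the hypotheses into $XY\ge\max\{x-1,y-1\}$, bound $\cos\angle XOY$ via the law of cosines, and then use the same monotonicity in $y$ (down to $y=x$, or $y=R$ in part (b)) and in $x$ (down to $x=R$, via the same function $\tfrac1x+\tfrac{x^2-1}{2x^2}$) before translating back into the chord length $AB$ (resp.\ $XB$). The only cosmetic difference is that you phrase the goal as an equivalent upper bound on $\cos\phi$ and explicitly justify which of the two cosine bounds is tighter, whereas the paper simply assumes $x\le y$ and works with $XY\ge y-1$ directly.
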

\begin{proof}
We first prove part (a). In this case we have $x\ge R$ and $y\ge R$. Since the two circles do not contain each other's centers, it follows that
$XY\ge \max\{r_x,r_y\}$, see figure \ref{f1}.
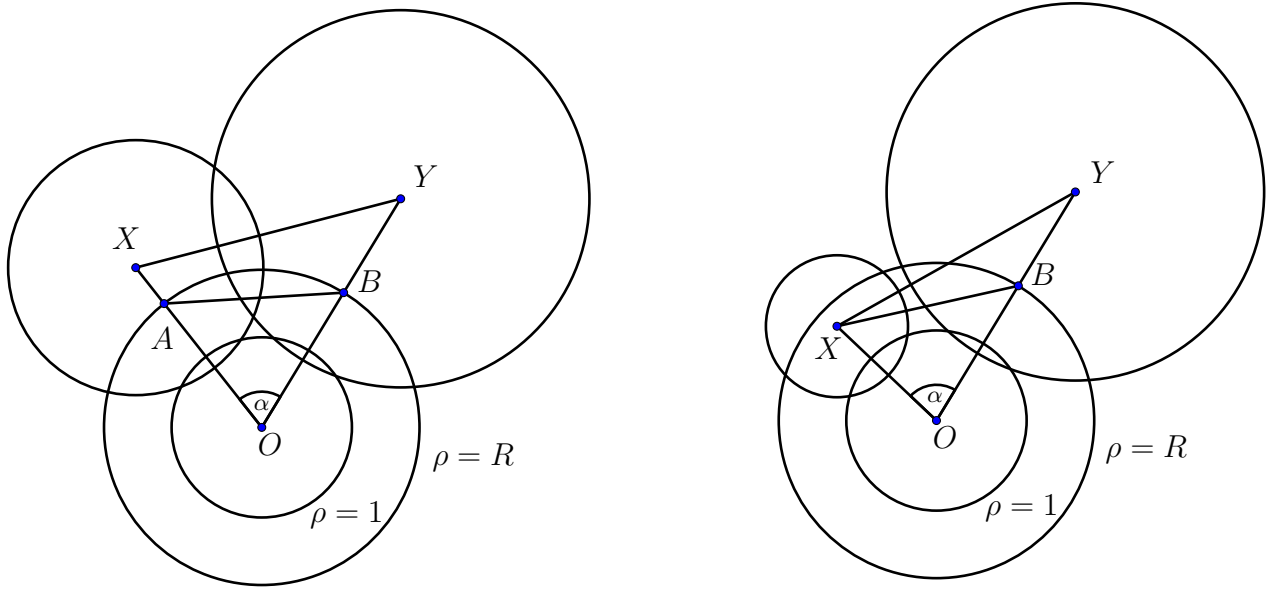
\begin{figure}\label{f1}
\definecolor{qqqqff}{rgb}{0.,0.,1.}
\begin{tikzpicture}[line cap=round,line join=round,>=triangle 45,x=1.0cm,y=1.0cm,scale = 0.5]
\clip(-7,-8.69424522959205) rectangle (29.27935485055309,12.845864818963152);
\draw [shift={(0.2518187327866559,-2.256638930350115)},line width=1.pt] (0,0) -- (58.737378324275156:0.9512527689559348) arc (58.737378324275156:128.2449333975739:0.9512527689559348) -- cycle;
\draw [shift={(18.191198401867105,-2.074376670667764)},line width=1.pt] (0,0) -- (58.73737832427517:0.9512527689559348) arc (58.73737832427517:136.522467731053:0.9512527689559348) -- cycle;
\draw [line width=1.pt] (0.2518187327866559,-2.256638930350115) circle (2.3972936080448304cm);
\draw [line width=1.pt] (3.94482196353805,3.826220272052222)-- (0.2518187327866559,-2.256638930350115);
\draw [line width=1.pt] (0.2518187327866559,-2.256638930350115) circle (4.1940639218668485cm);
\draw [line width=1.pt] (-3.0987169567937234,1.9942684452394692)-- (0.2518187327866559,-2.256638930350115);
\draw [line width=1.pt] (-3.0987169567937234,1.9942684452394692) circle (3.3927458112488926cm);
\draw [line width=1.pt] (-3.0987169567937234,1.9942684452394692)-- (3.94482196353805,3.826220272052222);
\draw [line width=1.pt] (-2.3444095985645923,1.0372600711668178)-- (2.428376746864058,1.3284367162800323);
\draw [line width=1.pt] (3.94482196353805,3.826220272052222) circle (5.0189948135229585cm);
\draw (-4.014492062904631,3.3606028093637326) node[anchor=north west] {$X$};
\draw (4,5) node[anchor=north west] {$Y$};
\draw (2.5,2.2) node[anchor=north west] {$B$};
\draw (-3,0.7) node[anchor=north west] {$A$};
\draw (-0.1460641358171623,-2.127525376792453) node[anchor=north west] {$O$};
\draw (1.25,-3.9357525826358204) node[anchor=north west] {$\rho = 1$};
\draw (4.5,-2.381311651296785) node[anchor=north west] {$\rho = R$};
\draw [line width=1.pt] (18.191198401867105,-2.074376670667764) circle (2.3972936080448304cm);
\draw [line width=1.pt] (21.8842016326185,4.0084825317345745)-- (18.191198401867105,-2.074376670667764);
\draw [line width=1.pt] (18.191198401867105,-2.074376670667764) circle (4.1940639218668485cm);
\draw [line width=1.pt] (15.545125087675975,0.43468184650734826)-- (18.191198401867105,-2.074376670667764);
\draw [line width=1.pt] (15.545125087675975,0.43468184650734826)-- (21.8842016326185,4.0084825317345745);
\draw [line width=1.pt] (21.8842016326185,4.0084825317345745) circle (5.0189948135229585cm);
\draw (14.661770634263558,0.4420606525639113) node[anchor=north west] {$X$};
\draw (22.,5.1) node[anchor=north west] {$Y$};
\draw (20.4,2.4) node[anchor=north west] {$B$};
\draw (17.800904771818143,-1.9371856709142037) node[anchor=north west] {$O$};
\draw (19.2,-3.7454128767575714) node[anchor=north west] {$\rho = 1$};
\draw (22.4,-2.190971945418536) node[anchor=north west] {$\rho = R$};
\draw [line width=1.pt] (15.545125087675975,0.43468184650734826) circle (1.889114515272353cm);
\draw [line width=1.pt] (15.545125087675975,0.43468184650734826)-- (20.367756415944505,1.5106989759623828);
\begin{scriptsize}
\draw [fill=qqqqff] (0.2518187327866559,-2.256638930350115) circle (3.0pt);
\draw [fill=qqqqff] (3.94482196353805,3.826220272052222) circle (3.0pt);
\draw [fill=qqqqff] (-3.0987169567937234,1.9942684452394692) circle (3.0pt);
\draw [fill=qqqqff] (-2.3444095985645923,1.0372600711668178) circle (3.0pt);
\draw [fill=qqqqff] (2.428376746864058,1.3284367162800323) circle (3.0pt);
\draw [fill=qqqqff] (18.191198401867105,-2.074376670667764) circle (3.0pt);
\draw [fill=qqqqff] (21.8842016326185,4.0084825317345745) circle (3.0pt);
\draw [fill=qqqqff] (15.545125087675975,0.43468184650734826) circle (3.0pt);
\draw [fill=qqqqff] (20.367756415944505,1.5106989759623828) circle (3.0pt);
\draw[color=black] (0.25,-1.65) node {$\alpha$};
\draw[color=black] (18.15,-1.45) node {$\alpha$};
\end{scriptsize}
\end{tikzpicture}
\vspace{-1.3cm}
\caption{Illustration for Lemma 2.4}
\end{figure}
On the other hand, both circles intersect $\rho=1$, which means that $x\le 1+r_x$ and $y\le 1+r_y$. It follows that
$XY\ge \max\{x-1,y-1\}$. Without loss of generality one can assume $1<R\le x\le y$. By the law of cosines in triangle $OXY$
\begin{equation*}
\cos{\alpha}=\frac{x^2+y^2-XY^2}{2xy}\le \frac{x^2+y^2-(y-1)^2}{2xy}=\frac{1}{x}+\frac{x^2-1}{2xy}\le\frac{1}{x}+\frac{x^2-1}{2x^2}.
\end{equation*}
Let $f(x)=\frac{1}{x}+\frac{x^2-1}{2x^2}$. Differentiating, we obtain that $f'(x)=(1-x)/x^3<0$ for all $x\ge R>1$. Hence, the maximum of $f$ is reached when $x=R$. We obtain
\begin{equation*}
\cos{\alpha}\le \frac{1}{R}+\frac{R^2-1}{2R^2} \longrightarrow 1-\cos{\alpha}\ge \frac{(R-1)^2}{2R^2}.
\end{equation*}
On the other hand,
\begin{equation*}
AB^2=2R^2(1-\cos{\alpha})\ge 2R^2\cdot\frac{(R-1)^2}{2R^2}=(R-1)^2\longrightarrow AB\ge R-1,\,\, \text{as desired}.
\end{equation*}

This proves part (a). For part (b) we are given that $1\le x\le R\le y$. As before, $XY\ge \max\{r_x,r_y\}\ge \max\{x-1,y-1\} =y-1$.
It follows that
\begin{equation*}
\cos{\alpha}=\frac{x^2+y^2-XY^2}{2xy}\le \frac{x^2+y^2-(y-1)^2}{2xy}=\frac{1}{x}+\frac{x^2-1}{2xy}\le\frac{1}{x}+\frac{x^2-1}{2xR}.
\end{equation*}
From the law of cosines in triangle $OAY$ we obtain that

\begin{equation*}
XB^2=x^2+R^2-2xR\,\cos\alpha\ge x^2+R^2-2xR\cdot\left(\frac{1}{x}+\frac{x^2-1}{2xR}\right)=(R-1)^2,
\end{equation*}
from which $XB\ge R-1$ as claimed. This completes the proof of Lemma \ref{radprojlemma}.
\end{proof}
We are going to use this lemma on a couple of occasions. Note that only the points that are outside the circle $\rho=R$ are radially projected onto the circle. The points that are inside the circle stay fixed - see case (b). We are now ready to prove the following result.
\begin{thm}{\bf modeled after a result of Soss\cite{soss}}
Theorem \ref{weight} implies Theorem \ref{point}.
\end{thm}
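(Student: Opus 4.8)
The plan is to fix an arbitrary node $v$ of the WSIG of a point set $V$, rescale so that its sphere of influence $B_v$ has radius $r_v=1$, and place $v$ at the origin $O$; here $p\ge 1$ and $q=1/p$ are the constants of Theorem~\ref{weight}. Since outgoing edges of weight $0$ contribute nothing, it suffices to control the neighbours $b$ of $v$ with $w(v,b)=1$ (equivalently $r_b>p$) or $w(v,b)=1/2$ (equivalently $q\le r_b\le p$); for such a $b$ write $\rho_b=|vb|$. Two elementary facts drive everything. First, since $v$ and $b$ are distinct points of $V$ and $r_a$ denotes the distance from $a\in V$ to its nearest neighbour, every adjacent pair $a,b$ of the CSIG satisfies $|ab|\ge\max(r_a,r_b)$ --- precisely the hypothesis ``do not contain each other's centers'' of Lemma~\ref{radprojlemma} --- and taking $a=v$ gives $\rho_b\ge\max(1,r_b)$. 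Second, the edge $\{v,b\}$ forces $\rho_b\le r_v+r_b=1+r_b$. Hence a weight-$1$ neighbour satisfies $p<r_b\le\rho_b\le 1+r_b$, and a weight-$1/2$ neighbour satisfies $1\le\rho_b\le 1+p$. Finally, because $\rho_b\ge 1$ while $B_b$ meets the closed unit disc $B_v$, the circle $\partial B_b$ meets $\rho=1$, so Lemma~\ref{radprojlemma} will be applicable, with $O=v$, to any two of these neighbours.

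Next I would build the configuration demanded by Theorem~\ref{weight} by taking $R=1+p$, so that $R-1=p$. Each weight-$1$ neighbour $b$ with $\rho_b>1+p$ is moved radially out to the circle $\rho=1+p$, and every other weight-$1$ neighbour stays fixed; since the unmoved ones have $\rho_b>p$, all of these points lie in the annulus $p\le\rho\le 1+p$, and they form $\mathcal{P}_1$. Every weight-$1/2$ neighbour stays fixed and already lies in the annulus $1\le\rho\le 1+p$; these form $\mathcal{P}_{1/2}$. Once the separation conditions are verified, all pairwise distances will be positive, so distinct neighbours give distinct points and $\mathcal{P}_1\cap\mathcal{P}_{1/2}=\emptyset$; moreover $|\mathcal{P}_1|$ counts the weight-$1$ edges out of $v$ and $|\mathcal{P}_{1/2}|$ the weight-$1/2$ edges, so the total weight of $\mathcal{P}_1\cup\mathcal{P}_{1/2}$ equals the sum of the weights of the edges leaving $v$. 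Theorem~\ref{weight} then bounds this sum by $14.5$, which is Theorem~\ref{point}.

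The one substantive step is checking the three distance requirements. If $b$ and $b'$ both have weight $1/2$, neither moves and $|bb'|\ge\max(r_b,r_{b'})\ge q$. If at least one of them has weight $1$, the relevant threshold is $p$: the original distance $|bb'|\ge\max(r_b,r_{b'})$ already exceeds $p$ (some endpoint has radius $>p$), and this is unchanged whenever neither endpoint is moved. If exactly one endpoint is moved --- it must be a weight-$1$ point, which then lies outside $\rho\le R$, while the other lies in the annulus $1\le\rho\le R$ because every unmoved weight-$1$ point has $\rho\in(p,1+p]$ and every weight-$1/2$ point has $\rho\in[1,1+p]$ --- Lemma~\ref{radprojlemma}(b) gives separation at least $R-1=p$. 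If both endpoints are moved, they are both weight-$1$ points outside $\rho\le R$, and Lemma~\ref{radprojlemma}(a) gives separation at least $R-1=p$. In every case the separation meets the requirement of Theorem~\ref{weight}.

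I expect no conceptual difficulty here; the delicate part is purely organizational --- running the case analysis above and, in each subcase, confirming that the unmoved endpoint really lies in the annulus $1\le\rho\le R$ so that part (b) of Lemma~\ref{radprojlemma} applies, and that unmoved weight-$1$ neighbours genuinely land in $[p,1+p]$ (this is where one needs $\rho_b\ge r_b>p$ rather than merely $\rho_b\ge 1$). One must also keep the normalization $r_v=1$ and the relation $q=1/p$ in step with the radius thresholds $r_b>p$ and $q\le r_b\le p$ that define the edge weights.
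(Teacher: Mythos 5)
Your proposal is correct and follows essentially the same route as the paper: normalize at the chosen node so $r_v=1$, derive the annulus constraints and pairwise separations ($\ge p$ or $\ge q$) from the nearest-neighbour and intersection conditions, radially project the far weight-$1$ points onto $\rho=1+p$ using parts (a) and (b) of Lemma \ref{radprojlemma} with $R=1+p$, and then invoke Theorem \ref{weight}. Your extra care in verifying that each neighbour's circle meets $\rho=1$ and that projected points remain distinct only makes explicit what the paper leaves implicit.
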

\begin{proof}
Let $O$ be some node in the WSIG. Without loss of generality, assume that $O$ is at the origin and that the sphere of influence of $O$ is $1$.
Consider the outgoing edges from $O$ that have nonzero weight. We have a set of circles of radius at least $q$, which all intersect the circle $\rho=1$ such that the center of no circle is contained in any other. Since $r_O=1$, no circle is centered in $\rho<1$. Also, one cannot have two centers that have the same amplitude.
Indeed, assume for the sake of contradiction that $X(x,\theta)$ and $Y(y,\theta)$ are two such points with say, $x<y$. Since the circle centered at $Y$ has to intersect $\rho=1$, it is going to contain $X$ in its interior, impossible.

Let us look first at the outgoing edges of weight $1$ from $O$. These edges correspond to circles of radius at least $p$. Since no such circle contains $O$, these centers are all located in $\rho\ge p$. For obvious reasons, we will refer to these centers as {\it points of weight $1$}. Clearly, since no circle may contain the center of another, the distance between any two points of weight $1$ is at least $p$.

Similarly, consider the outgoing edges of weight $1/2$ from $O$. These edges correspond to circles of radii in the interval $[q,p]$. All these centers are located in the annulus $1\le \rho\le 1+p$. The left bound is due to the fact that none of these centers may be contained in the open disc $\rho<1$, while the right bound follows from the fact that each circle must intersect $\rho=1$.  We will refer to these circle centers as {\it points of weight $1/2$}. Clearly, since no circle may contain the center of another, the distance between any two points of weight $1/2$ is at least $q$.

In addition, the distance between a point of weight $1$ and one of weight $1/2$ must be at least $p$, since no circle centered at a point of weight $1$ can contain a point of weight $1/2$ in its interior.

 In summary, we have two kinds of points: points of weight $1$ all lying in $\rho\ge p$, and points of weight $1/2$, all located in the annulus
$1 \le \rho\le 1+p$. The distance between any two points of weight $1$ is at least $p$, the distance between two points of weight $1/2$ is at least $q$, and the distance between any two points of different weights is at least $p$.

 While the points of weight $1/2$ are restricted to the annulus $1\le \rho \le 1+p$, the points of weight $1$ can be arbitrarily far away from the origin. We now use Lemma \ref{radprojlemma} for the first time. Divide the set of points of weight $1$ in two classes: {\it interior points} are those which are within $p\le \rho \le 1+p$, and {\it exterior points} are those outside the circle $\rho=1+p$.
Next, radially project the exterior points onto the circle $\rho=1+p$. In other words, if $X(x,\theta_x)$ is such a point with $x>1+p$, its radial projection is $A(1+p,\theta_x)$. The interior points of weight $1$ and the points of weight $1/2$ stay fixed.

We call these newly obtained points {\it boundary points} of weight $1$. From Lemma \ref{radprojlemma} part (a) with $R=1+p$, it follows that the distance between two boundary points of weight $1$ is at least $1+p-1=p$. Also, from Lemma \ref{radprojlemma} part (b) it follows that the distance between a boundary point of weight $1$, and any other non-boundary point (interior point of weight $1$, or point of weight $1/2$) is at least $p$, as well.

We have now arrived at the hypotheses from Theorem \ref{weight}. Here $\mathcal{P}_1$ consists of all the interior and boundary points of weight $1$, while $\mathcal{P}_{1/2}$ consists of all points of weight $1/2$. Thus if we can prove Theorem \ref{weight}, then Theorem \ref{point}, and implicitly Theorem \ref{main}, will follow.
\end{proof}

\begin{note}
It now remains to prove Theorem \ref{weight}. Bateman and Erd\H{o}s \cite{BE} used $p=q=1$ to prove an upper bound of $18$ for the total weight
of the points in $\mathcal{P}_1 \cup \mathcal{P}_{1/2}$. For this selection of $p$ and $q$, this bound is optimal. Soss \cite{soss} used $p=3/2$, $q=2/3$ to prove an upper bound of of $15$ for the total weight of the points in $\mathcal{P}_1 \cup \mathcal{P}_{1/2}$. In a private communication, Soss admitted he chose these values because they were ``nice." Our idea was to try to choose $p$ so that we can optimize, or at least improve Soss' result. For reasons that are going to become clear later, we ended up choosing $p=1.409$, $q=1/p=0.7097\ldots$. We record this choice for future reference. For the remainder of the paper
\begin{equation}\label{pq}
p=1.409\qquad q=1/p=0.7097\ldots
\end{equation}
\end{note}

The proof of Theorem \ref{weight} is based on a technique initiated by Bateman and Erd\H{o}s \cite{BE} and extended by Soss \cite{soss}, of fitting points into annuli. We omit the simple proof.
\begin{lemma}\label{BEsoss}\cite{BE,soss}
Label the origin as $O$. Let $r$, $R$, and $d$ be such that $0\le R-d\le r\le R$. Suppose we have two points $A$ and $B$ which lie in the annulus $r\le \rho\le R$ and which have mutual distance at least $d$. Then the minimum value of the angle $\angle AOB$ is at least
\begin{equation}\label{phi}
\Phi_d(r,R)=\min\left(\arccos\frac{R^2+r^2-d^2}{2Rr},2\arcsin\frac{d}{2R}\right)
\end{equation}
\end{lemma}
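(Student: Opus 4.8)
The plan is to reduce the geometric statement to a single application of the law of cosines followed by an elementary optimization on a square. Write $a=|OA|$ and $b=|OB|$, so that $r\le a,b\le R$, and set $\theta=\angle AOB$. The law of cosines gives $|AB|^2=a^2+b^2-2ab\cos\theta$, hence $\cos\theta=\dfrac{a^2+b^2-|AB|^2}{2ab}$; since $|AB|\ge d\ge 0$ (note $d\ge R-r\ge 0$ from the hypothesis) and the denominator is positive, this yields the clean inequality
\[
\cos\theta\ \le\ g(a,b):=\frac{a^2+b^2-d^2}{2ab}.
\]
Because $\arccos$ is decreasing, it therefore suffices to bound $g$ from above on the square $[r,R]^2$: if $M:=\max_{[r,R]^2}g$, then $\theta\ge\arccos M$, and the entire lemma comes down to identifying $M$ and rewriting $\arccos M$.

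Next I would locate the maximum of $g$. A short computation gives $\partial_a g(a,b)=\dfrac{a^2-b^2+d^2}{2a^2 b}$, whose sign, as a function of $a$, changes at most once (from negative to positive); hence $g(\cdot,b)$ restricted to $[r,R]$ attains its maximum at an endpoint, and by the symmetry $g(a,b)=g(b,a)$ the maximum of $g$ over the square is attained at a corner. Evaluating, $g(R,R)=1-\dfrac{d^2}{2R^2}$, $g(r,r)=1-\dfrac{d^2}{2r^2}$, and $g(r,R)=g(R,r)=\dfrac{r^2+R^2-d^2}{2rR}$; since $r\le R$ we get $g(r,r)\le g(R,R)$, so $M=\max\{g(R,R),\,g(r,R)\}$. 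The hypothesis $R-d\le r$ (equivalently $(R-r)^2\le d^2$) is exactly what guarantees $g(r,R)\le 1$, while $g(R,R)\le 1$ is trivial, so both relevant corner values lie in $[-1,1]$ and $\arccos$ applies.

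Finally I would translate back. Using that $\arccos$ is decreasing together with the half-angle identity $\arccos\!\bigl(1-\tfrac{d^2}{2R^2}\bigr)=2\arcsin\tfrac{d}{2R}$, we obtain $\theta\ge\arccos M=\min\bigl\{\arccos g(R,R),\,\arccos g(r,R)\bigr\}=\min\bigl\{2\arcsin\tfrac{d}{2R},\ \arccos\tfrac{R^2+r^2-d^2}{2Rr}\bigr\}=\Phi_d(r,R)$, which is the claim. I do not expect any genuine obstacle here — this is the ``simple proof'' the authors omit — but the one point worth emphasizing is that the minimum of the two $\arccos$ expressions built into the definition of $\Phi_d$ is precisely what spares us a case distinction on whether $d^2\gtrless R(R-r)$: whichever corner actually realizes $M$, the $\min$ selects the corresponding term automatically. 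One may also note for completeness that equality is attained (place one point on $\rho=R$ and the other on whichever of $\rho=r$, $\rho=R$ realizes the maximum, at chord distance exactly $d$), so the bound $\Phi_d(r,R)$ is sharp.
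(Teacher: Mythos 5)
Your proof is correct, and it fills in exactly the argument the paper deliberately omits: for this lemma the authors write only ``we omit the simple proof'' and cite Bateman--Erd\H{o}s and Soss, so there is no in-paper proof to compare against. Your reduction via the law of cosines to maximizing $g(a,b)=\frac{a^2+b^2-d^2}{2ab}$ over the square $[r,R]^2$, the observation that the maximum sits at a corner (with $g(r,r)\le g(R,R)$), the use of $R-d\le r$ to keep $g(r,R)\le 1$, and the identity $\arccos\bigl(1-\tfrac{d^2}{2R^2}\bigr)=2\arcsin\tfrac{d}{2R}$ together give precisely the two terms in $\Phi_d(r,R)$, which is the standard argument the cited sources intend.
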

The idea for proving Theorem \ref{weight} is simple. We first sort the points with respect to their amplitudes - we call this a {\it circular ordering}. Note that such an ordering is always well defined as no two points can have the same amplitude. Next, we compute the minimum angles between each pair of consecutive points.

For example the minimum angle between a point of weight $1$ in $1.7\le \rho \le 2$, and another of weight $1$ in $1.5\le \rho\le 1.8$ is at least $\Phi_p(1.5,2)$. This is because both points belong to the annulus $1.5\le \rho\le 2$, and they are at least distance $p$ apart.

Likewise, the minimum angle between a point of weight $1$ in $1.7\le \rho \le 2.2$ and another of weight $1/2$ in $1.3\le \rho\le 1.8$ is at least $\Phi_p(1.3,2.2)$. This is because both points belong to the annulus $1.3\le \rho\le 2.2$, and they are at least distance $p$ apart.

In general, the following is true:
\begin{remark}\label{one}
If one has two points, one in $a\le \rho \le b$, the other in $c\le \rho \le d$, and {\bf at least one of them is of weight $1$}, then the minimum angle is at least $\Phi_p\left(\min(a,c),\max(b,d)\right)$.
\end{remark}

If both points are of weight $1/2$ we do things a bit differently. We believe that the following result is very important since for quite a while we were confused about how this case works . Recall first that the distance between any two points of weight $1/2$ is at least $q=0.709\ldots$, and all these points are in the annulus $1\le \rho \le 1+p$. We claim that the following holds.

\begin{lemma}\label{two}
If one has {\bf two points of weight $1/2$} , one in $a\le \rho \le b$, the other in $c\le \rho \le d$, and $\min(a,c)\le 1+q$, then the minimum angle is at least $\Phi_q\left(\min(a,c), \min(1+q,\max(b,d)\right)$.
\end{lemma}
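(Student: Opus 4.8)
The plan is to reduce the statement to a single application of Lemma \ref{BEsoss}. Relabel the two intervals so that $a=\min(a,c)\le 1+q$, write $P$ for the point with radius $\rho_P\in[a,b]$ and $Q$ for the point with radius $\rho_Q\in[c,d]$, and put $R^{\ast}=\min(1+q,\max(b,d))$. Since every point of weight $1/2$ satisfies $\rho\ge 1$ we have $1\le a$, and $a\le b\le\max(b,d)$ together with $a\le 1+q$ gives $a\le R^{\ast}\le a+q$; thus the hypotheses $R^{\ast}-q\le a\le R^{\ast}$ needed to evaluate $\Phi_q(a,R^{\ast})$ via Lemma \ref{BEsoss} hold automatically. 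I will also use, as recalled from the reduction of Section 2, that the weight-$1/2$ points are the centres of circles of radius at least $q$ which meet $\rho=1$ and none of which contains another's centre; this is exactly what makes Lemma \ref{radprojlemma} applicable, and it is essential, since for points obeying only the stated distance and annulus bounds the conclusion can fail (e.g. two points both at radius $1+p$ and exactly distance $q$ apart subtend a smaller angle than $\Phi_q(1+q,1+q)$).

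If $\rho_P\le R^{\ast}$ and $\rho_Q\le R^{\ast}$, then both points lie in the annulus $a\le\rho\le R^{\ast}$ (the lower bound because $\rho_P\ge a$ and $\rho_Q\ge c\ge a$) and are at distance at least $q$, so Lemma \ref{BEsoss} gives $\angle POQ\ge\Phi_q(a,R^{\ast})$ at once. Otherwise some radius exceeds $R^{\ast}$; since $\rho_P\le b\le\max(b,d)$ and $\rho_Q\le d\le\max(b,d)$, this forces $\max(b,d)>1+q$, hence $R^{\ast}=1+q$. The idea now is to radially project onto the circle $\rho=1+q$ any point lying outside it, which leaves its amplitude — and therefore $\angle POQ$ — unchanged, and then to quote Lemma \ref{BEsoss} for the resulting configuration inside $a\le\rho\le 1+q$.

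Concretely, if both $\rho_P,\rho_Q>1+q$, project both onto $\rho=1+q$; by Lemma \ref{radprojlemma}(a) with $R=1+q$ the projected points are at distance at least $(1+q)-1=q$, so two points on the circle $\rho=1+q$ at distance at least $q$ subtend an angle of at least $2\arcsin\frac{q}{2(1+q)}$, which is precisely the second branch of $\Phi_q(a,1+q)$, and hence $\angle POQ\ge\Phi_q(a,1+q)=\Phi_q(a,R^{\ast})$. If exactly one radius exceeds $1+q$, say $\rho_Q>1+q\ge\rho_P$ (the other choice being symmetric, with $\rho_Q\ge c\ge a$ playing the role of $\rho_P\ge a$), then $P$ sits in the annulus $1\le\rho\le 1+q$ and $Q$ outside the disc $\rho\le 1+q$, so Lemma \ref{radprojlemma}(b) with $R=1+q$ shows $P$ and the projection $B=(1+q,\theta_Q)$ of $Q$ are at distance at least $q$; since $P$ has radius in $[a,1+q]$ and $B$ has radius $1+q$, both lie in $a\le\rho\le 1+q$ at distance at least $q$, and Lemma \ref{BEsoss} gives $\angle POQ=\angle POB\ge\Phi_q(a,1+q)=\Phi_q(a,R^{\ast})$. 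Combining the cases finishes the proof.

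The step I expect to be the main obstacle is recognising that one must pass through the circle structure and Lemma \ref{radprojlemma} at all — the distance-and-annulus data alone does not suffice — and, relatedly, spotting that projecting onto radius $1+q$ is the correct move because there the two branches of $\Phi_q$ coincide (both equal $2\arcsin\frac{q}{2(1+q)}$), so the projected configuration lands exactly on the bound to be proved. Once that is seen, the case analysis is routine; the only care needed is to check in each projection that the unprojected point still has radius between $a$ and $1+q$, which follows from $1\le a\le c$ and the sub-case assumptions.
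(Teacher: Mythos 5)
Your proof is correct and follows essentially the same route as the paper: the same three-way case split (both radii at most $1+q$, both exceeding $1+q$, or one of each), with Lemma \ref{BEsoss} handling the first case and Lemma \ref{radprojlemma} parts (a) and (b) with $R=1+q$ handling the projections in the other two. Your explicit remark that the underlying circle structure from the WSIG reduction is what licenses the projection lemma is a point the paper leaves implicit, but the argument itself is the paper's.
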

\begin{proof}
Notice the subtle differences between the result in remark \ref{one} and that of lemma \ref{two}. Let $X(x,\theta_x)$ and $Y(y,\theta_y)$ located in $a\le \rho\le b$ and $c\le \rho \le d$, respectively, be the two points of weight $1/2$. Recall that $XY\ge q$. If both $x\le 1+q$ and $y\le 1+q$, then both points belong to the annulus $\min(a,c)\le \rho \le \min(1+q,\max(b,d))$, and the result follows.

If both $x\ge 1+q$ and $y\ge 1+q$, then we have that $1+q\le \max(b,d)$. Next, we radially project $X$ and $Y$ onto the circle $\rho=1+q$. We thus obtain the points $A(1+q,\theta_x)$ and $B(1+q,\theta_y)$. By Lemma \ref{radprojlemma} part (a) for $R=1+q$, we have that $AB\ge 1+q-1=q$. On the other hand, it is clear that $\angle XOY=\angle AOB$. But since $AB\ge q$ and $OA=OB=1+q$, it then follows that $\angle XOY\ge \Phi_q\left(\min(a,c), 1+q\right)=\Phi_q\left(\min(a,c), \min(1+q,\max(b,d))\right)$.

Finally, suppose that $x\le 1+q$ and $y\ge 1+q$, so again $1+q\le \max(b,d)$. Leave $X$ fixed and radially project $Y$ onto the circle $\rho=1+q$, to obtain point $B(1+q,\theta_y)$. Then by Lemma \ref{radprojlemma} part (b) for $R=1+q$, we have that $XB\ge 1+q-1=q$. On the other hand, it is clear that $\angle XOY=\angle XOB$. But since $XB\ge q$ and $OX=OB=1+q$, it then follows that $\angle XOY\ge \Phi_q\left(\min(a,c), 1+q\right)=\Phi_q\left(\min(a,c), \min(1+q,\max(b,d)\right)$.
\end{proof}

\begin{figure}\label{target}
\definecolor{ududff}{rgb}{0.30196078431372547,0.30196078431372547,1.}
\begin{tikzpicture}[line cap=round,line join=round,>=triangle 45,x=1.0cm,y=1.0cm,scale = 2]
\clip(-4.25,-3.0973014929209253) rectangle (4.521301579756383,3.55637144672084);
\draw [line width=2.pt] (0.,0.) circle (1.2cm);
\draw [line width=2.pt] (0.,0.) circle (1.5cm);
\draw [line width=2.pt] (0.,0.) circle (2.cm);
\draw [line width=2.pt] (0.,0.) circle (2.4cm);
\draw [line width=2.pt] (0.,0.)-- (0.8539927366752869,1.0433887275300697);
\draw [line width=2.pt] (0.,0.)-- (-1.328308069517886,1.1353630910094754);
\draw [line width=2.pt] (0.,0.)-- (2.225246883095518,-0.15227799770220463);
\draw [line width=2.pt] (0.,0.)-- (0.9961349347798231,-2.008487878832029);
\draw [line width=2.pt] (0.,0.)-- (-1.4119211272264367,-0.9549633517042909);
\draw (0.9,1.5) node[anchor=north west] {\large $D$};
\draw (2,-0.25) node[anchor=north west] {\large$C$};
\draw (1.05,-1.65) node[anchor=north west] {\large$B$};
\draw (0.21019396501807971,-0.08) node[anchor=north west] {\large$O$};
\draw (-1.67,1.1) node[anchor=north west] {\large $E$};
\draw (-1.75,-0.5381965161356314) node[anchor=north west] {\large$A$};
\draw (-0.2,1.15) node[anchor=north west] {\large$1.2$};
\draw (-0.2,1.8) node[anchor=north west] {\large$1.5$};
\draw (-0.2,2.29) node[anchor=north west] {\large$2.0$};
\draw (-0.2,2.7) node[anchor=north west] {\large$2.4$};
\begin{scriptsize}
\draw [fill=ududff] (0.,0.) circle (1.5pt);
\draw [fill=ududff] (0.8539927366752869,1.0433887275300697) circle (1.5pt);
\draw [fill=ududff] (2.225246883095518,-0.15227799770220463) circle (1.5pt);
\draw [fill=ududff] (0.9961349347798231,-2.008487878832029) circle (1.5pt);
\draw [fill=ududff] (-1.4119211272264367,-0.9549633517042909) circle (1.5pt);
\draw [fill=ududff] (-1.328308069517886,1.1353630910094754) circle (1.5pt);
\end{scriptsize}
\end{tikzpicture}
\vspace{-1.75cm}
\caption{A sample configuration of weighted points}
\end{figure}
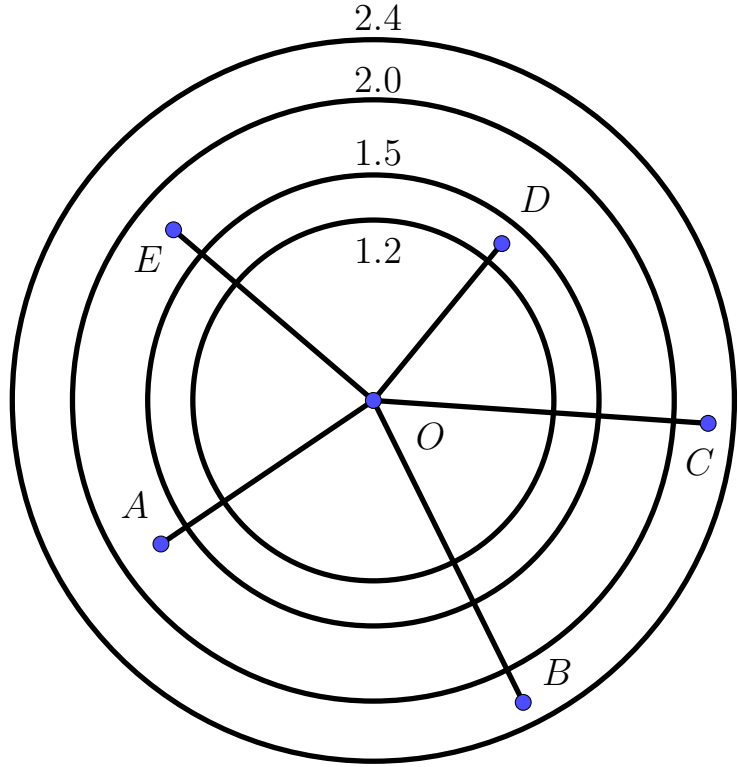
A sample configuration is presented in figure \ref{target}. The three white circles have weight $1$, the other two weight $1/2$. We see that
$\angle AOB\ge \Phi_p(1.5,2,4), \angle BOC \ge \Phi_p(2,2.4), \angle COD\ge \Phi_p(1.2,2.4), \angle DOE \ge \Phi_q(1.2,1+q)$, and $\angle EOA\ge \Phi_p(1.5,2)$.
\section{\bf Five easy cases}
We now have everything in place to begin proving Theorem \ref{weight}. The proof of each result below proceeds by assuming that a certain configuration of weight $15$ is possible and then proving that the sum of the angles subtended by pairs of consecutive points in the circular ordering is greater than $360^{\circ}$, a contradiction. Relations (\ref{pq}), (\ref{phi}), remark \ref{one} and lemma \ref{two} are going to be used extensively in the next sections, many times without explicit reference.  The proof is quite involved as it requires a rather lengthy case analysis. However, it should be rather straightforward to verify each individual statement.
\begin{lemma}\label{120}
It is impossible to have $12$ points of weight $1$ in the annulus $p\le \rho \le 1+p$.
\end{lemma}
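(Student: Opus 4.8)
The plan is to argue by contradiction, using the circular ordering together with the angle estimate of Lemma \ref{BEsoss}. Suppose $12$ points of weight $1$ were present in the annulus $p\le\rho\le 1+p$. List them as $P_1,\dots,P_{12}$ in circular order by amplitude (which is well defined, since no two of them share an amplitude), with indices read cyclically. Any two of these points are at distance at least $p$ from each other, and every consecutive pair $P_i,P_{i+1}$ lies in the common annulus $p\le\rho\le 1+p$; hence Remark \ref{one}, applied with $a=c=p$ and $b=d=1+p$, yields
\[
\angle P_iOP_{i+1}\ \ge\ \Phi_p(p,\,1+p),\qquad i=1,\dots,12 .
\]
Adding the $12$ inequalities, the left-hand side sums to exactly $360^\circ$ (the angles between consecutive points of a circular ordering around $O$ add up to a full turn), so a contradiction will follow as soon as we show $12\,\Phi_p(p,1+p)>360^\circ$, i.e. $\Phi_p(p,1+p)>30^\circ$.

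So the entire proof comes down to a single numerical check. In formula (\ref{phi}) take $r=d=p$ and $R=1+p$; then $0\le R-d=1\le r=p\le R$, so Lemma \ref{BEsoss} applies, and the first branch of the minimum collapses pleasantly: $\arccos\dfrac{R^2+r^2-d^2}{2Rr}=\arccos\dfrac{R}{2p}=\arccos\dfrac{1+p}{2p}$. With $p=1.409$ this equals $\arccos(0.8549\ldots)\approx 31.2^\circ$, while the second branch $2\arcsin\dfrac{p}{2(1+p)}=2\arcsin(0.2924\ldots)\approx 34.0^\circ$ is larger. Hence $\Phi_p(p,1+p)\approx 31.2^\circ>30^\circ$, and therefore $12\,\Phi_p(p,1+p)>360^\circ$, the desired contradiction.

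I do not expect a genuine obstacle here: once the circular-ordering machinery and Lemma \ref{BEsoss}/Remark \ref{one} are in hand, everything reduces to the one-line estimate $\Phi_p(p,1+p)>30^\circ$. The only point worth flagging is that the conclusion really does depend on the choice (\ref{pq}): for $p=1$ one would instead get $\Phi_1(1,2)=\arccos\tfrac12=60^\circ$, under which only $6$ weight-$1$ points fit, whereas for $p=1.409$ the angle drops to just above $30^\circ$, so $11$ such points are \emph{not} excluded by this argument (since $360^\circ/31.2^\circ\approx 11.5$). In other words, Lemma \ref{120} is sharp for this method, which is presumably why the statement is phrased with the threshold $12$.
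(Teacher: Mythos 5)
Your proof is correct and follows essentially the same route as the paper: sort the twelve points in circular order, note that each consecutive angle is at least $\Phi_p(p,1+p)\approx 31^{\circ}.25>30^{\circ}$, and conclude that the twelve angles would sum to more than $360^{\circ}$; your explicit evaluation of the two branches of (\ref{phi}) merely makes transparent the numerical value the paper states without computation. The only slip is in your closing aside: for $p=1$ the formula gives $\Phi_1(1,2)=\min\left(\arccos 1,\,2\arcsin\tfrac14\right)=0^{\circ}$ (two points on the same ray at radii $1$ and $2$ are exactly distance $1$ apart), not $60^{\circ}$, but this side remark has no bearing on the correctness of your argument for the lemma.
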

\begin{proof}
Suppose there are $12$ points of weight $1$. The angle determined by two consecutive points is at least
$\Phi_p\left(p,1+p\right)>31^{\circ}.25$. The conclusion follows since $12\,\Phi_p\left(p,1+p\right)>375^{\circ}$.
\end{proof}
\begin{lemma}\label{027}
It is impossible to have $27$ points of weight $1/2$ in the annulus $1\le \rho \le 1+p$.
\end{lemma}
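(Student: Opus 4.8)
The plan is to argue by contradiction. Suppose we had $27$ points of weight $1/2$, pairwise at distance at least $q$, all lying in the annulus $1\le\rho\le 1+p$. Applying Lemma \ref{BEsoss} directly to this annulus is useless: since $(1+p)-1=p>q$, the hypothesis $R-d\le r$ fails for $r=1$, $R=1+p$, $d=q$, and in fact the minimum angle over the annulus is $0$ (a point at $\rho=1$ and a point at $\rho=1+q$ may sit on a common ray at distance exactly $q$). So the argument has two stages: first compress the radial range, then split it.

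For the compression I would proceed exactly as in the proof of Lemma \ref{two}: radially project every point with $\rho>1+q$ onto the circle $\rho=1+q$, keeping all other points fixed. By Lemma \ref{radprojlemma} part (a) for a pair of projected points, by Lemma \ref{radprojlemma} part (b) for a mixed pair, and trivially for a pair that was already inside, every pairwise distance remains at least $(1+q)-1=q$. Now all $27$ points lie in the thinner annulus $1\le\rho\le 1+q$, whose width is exactly $q$.

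Next, cut this annulus at $\rho=1+q/2$ into two sub-annuli, each of width $q/2<q$, so that $\Phi_q$ is well defined on each. Let $s$ be the number of points with $\rho\le 1+q/2$ and $t=27-s$ the number with $\rho>1+q/2$. Taking the first group in circular order, consecutive points subtend at least $\Phi_q(1,1+q/2)$, and a one-line check from (\ref{pq}) and (\ref{phi}) gives $\Phi_q(1,1+q/2)>30^{\circ}$; hence $12$ such points would subtend more than $360^{\circ}$, so $s\le 11$. Likewise consecutive points of the second group subtend at least $\Phi_q(1+q/2,1+q)>22.5^{\circ}$, forcing $t\le 15$. Therefore $27=s+t\le 26$, the desired contradiction.

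The only step with any real content is the compression: the outer shell of the original annulus has width $p-q\approx 0.70$, nearly as large as $q$ itself, so no annulus-packing estimate is of any use there until Lemma \ref{radprojlemma} pushes those points onto $\rho=1+q$ (this is precisely the device already built into Lemma \ref{two}). After that the cut point is constrained to a small interval around $1+q/2$, but the symmetric choice works and the two estimates for $\Phi_q$ are routine, with margins ($\approx 30.4^{\circ}$ against $30^{\circ}$, and $\approx 23.3^{\circ}$ against $22.5^{\circ}$) that leave a little room to spare.
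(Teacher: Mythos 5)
Your proposal is correct and takes essentially the same approach as the paper: a two-sub-annulus decomposition of $1\le\rho\le 1+p$ with consecutive-angle counting via $\Phi_q$, where the outer sub-annulus is handled by capping the radius at $1+q$ (the content of Lemma \ref{two}, which you re-derive explicitly from Lemma \ref{radprojlemma}). The only difference is the split radius — you cut at $1+q/2\approx 1.355$ and obtain $11+15\le 26$, while the paper cuts at $1.25$ and obtains $10+16\le 26$ — and your numerical margins check out.
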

\begin{proof}
Consider first the points of weight $1/2$ located in the annulus $1\le \rho \le 1.25$. The angle between any
two consecutive points is at least $\Phi_q\left(1,1.25\right)=32^{\circ}.98\ldots$. Since $11\cdot\Phi_q\left(1,1.25\right)>362^{\circ}$, it follows that there are at most $10$ points of weight $1/2$ in $1\le \rho \le 1.25$.

Next let us consider the remaining points of weight $1/2$, those in $1.25\le \rho\le 1+p$. The angle between
any two consecutive such points is at least $\Phi_q\left(1.25, 1+q\right)=21^{\circ}.31\ldots$. Since $17\cdot \Phi_q\left(1.25, 1+q\right)>362^{\circ}$, there are at most $16$ points of weight $1/2$ in $1.25\le \rho\le 1+p$.
Hence, there cannot be more than $10+16=26$ points of weight $1/2$.
\end{proof}
\begin{lemma}\label{224}
It is impossible to have $2$ points of weight $1$ in $p\le \rho \le 1+p$ and $24$ points of weight $1/2$ in $1\le \rho \le 1+p$.
\end{lemma}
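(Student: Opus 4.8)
The plan is to split the disc into an \emph{inner} region $1\le\rho\le 1.25$ and an \emph{outer} region $1.25\le\rho\le 1+p$ -- the same splitting radius that already appeared in Lemma \ref{027} -- and then to run two essentially independent circular-ordering arguments. I would begin by assuming, for contradiction, that such a configuration exists: two points $P_1,P_2$ of weight $1$ lying in $p\le\rho\le 1+p$ and $24$ points of weight $1/2$ lying in $1\le\rho\le 1+p$. Since $p=1.409>1.25$, both $P_1$ and $P_2$ lie in the outer region.

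The first step is immediate: exactly as in the proof of Lemma \ref{027}, at most $10$ of the weight-$1/2$ points can lie in the inner region, because two of them that are consecutive in the circular ordering subtend an angle at least $\Phi_q(1,1.25)=32^{\circ}.98\ldots$ and $11\cdot\Phi_q(1,1.25)>360^{\circ}$. Consequently at least $14$ of the weight-$1/2$ points -- call their number $n_2\ge 14$ -- lie in the outer region $1.25\le\rho\le 1+p$.

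The main step is a circular-ordering argument applied to the $n_2+2$ \emph{outer points}: the $n_2$ outer weight-$1/2$ points together with $P_1$ and $P_2$, all of which lie in $1.25\le\rho\le 1+p$. For two consecutive outer points I would bound the subtended angle from below as follows. If both have weight $1/2$, then by Lemma \ref{two} (applicable since $1.25\le 1+q$) the angle is at least $\Phi_q(1.25,1+q)=21^{\circ}.31\ldots$. If exactly one of the two has weight $1$, then by Remark \ref{one} the angle is at least $\Phi_p(1.25,1+p)>26^{\circ}.6$. Finally, two consecutive outer points can both have weight $1$ only if $P_1$ and $P_2$ are consecutive, and then by Remark \ref{one} the angle $\angle P_1OP_2$ is at least $\Phi_p(p,1+p)>31^{\circ}.25$. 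Now split into two cases. If $P_1$ and $P_2$ are \emph{not} consecutive among the outer points, then exactly $4$ of the $n_2+2$ consecutive pairs involve a weight-$1$ point and the remaining $n_2-2$ pairs join two weight-$1/2$ points; summing the lower bounds, the angles around $O$ total more than $(n_2-2)\,\Phi_q(1.25,1+q)+4\,\Phi_p(1.25,1+p)\ge 12\cdot 21^{\circ}.31+4\cdot 26^{\circ}.6>360^{\circ}$. If $P_1$ and $P_2$ \emph{are} consecutive, then $3$ consecutive pairs involve a weight-$1$ point -- namely the pair $\{P_1,P_2\}$ and the two pairs joining $P_1$ and $P_2$ to their other neighbors -- and the remaining $n_2-1$ pairs join two weight-$1/2$ points, giving a total of more than $(n_2-1)\,\Phi_q(1.25,1+q)+2\,\Phi_p(1.25,1+p)+\Phi_p(p,1+p)\ge 13\cdot 21^{\circ}.31+2\cdot 26^{\circ}.6+31^{\circ}.25>360^{\circ}$. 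In either case the angles around $O$ exceed $360^{\circ}$, which is the desired contradiction.

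The one genuine obstacle is the junction between a weight-$1$ point and a weight-$1/2$ point: one cannot simply apply Remark \ref{one} with the full annulus $1\le\rho\le 1+p$, since $\Phi_p(1,1+p)=0$ -- a weight-$1$ point at $\rho=1+p$ and a weight-$1/2$ point at $\rho=1$ in almost the same direction lie only distance $p$ apart. This degeneracy is exactly why the innermost weight-$1/2$ points, the ones that could ``hide'' behind a weight-$1$ point near $\rho=1+p$, must be peeled off and counted separately; once every outer point has $\rho\ge 1.25$ the quantity $\Phi_p(1.25,1+p)$ is safely positive and the count closes, though with only a small margin. What remains is then the routine verification of the three numerical values $\Phi_q(1.25,1+q)$, $\Phi_p(1.25,1+p)$, $\Phi_p(p,1+p)$ from the values (\ref{pq}), together with the elementary bookkeeping of how many consecutive pairs in the outer ordering meet a weight-$1$ point.
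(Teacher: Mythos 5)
Your proof is correct and follows essentially the same strategy as the paper's: split the annulus at an intermediate radius, bound the number of weight-$1/2$ points in the inner sub-annulus by a pure $\Phi_q$ count, and rule out the resulting outer configuration by the $[n_{11},n_{hh},n_{1h}]$ angle-sum count. The only difference is the choice of threshold and the order of the two steps --- the paper first shows (fact \ref{f224}) that $2$ points of weight $1$ plus $13$ points of weight $1/2$ in $1.32\le\rho\le 1+p$ is impossible, then derives the contradiction from $12\,\Phi_q(1,1.32)>374^{\circ}$, whereas you split at $\rho=1.25$; both versions close with comparable margins.
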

\begin{proof}
We will need the following
\begin{fact}\label{f224}
It is impossible to have $2$ points of weight $1$  in $p\le \rho \le 1+p$ and $13$ points of weight $1/2$ in $1.32\le\rho\le 1+p$.
\end{fact}
We prove the above fact first. Suppose we may place $15$ points as stated above. Sort the points in circular order and look at the $15$ angles determined by pairs of consecutive points.

Let $n_{11}$ be the number of angles determined by two consecutive points of weight $1$ (if any). Each of these angles is at least $\Phi_p\left(p,1+p\right)=31^{\circ}.25\ldots$.

Similarly, let $n_{hh}$ denote the number of angles determined by two consecutive points of weight $1/2$. Each of these angles is at least
$\Phi_q\left(1.32, 1+q\right)=22^{\circ}.77\ldots$.

Finally, let $n_{1h}$ be the number of angles determined by two consecutive points of different weight. Each of these angles is at least $\Phi_p\left(1.32,1+p\right)=29^{\circ}.03\ldots$.
Hence, the sum of the $15$ angles around the origin is at least $\mathcal{A}=n_{11}\cdot 31^{\circ}.25+n_{hh}\cdot 22^{\circ}.77+n_{1h}\cdot 29^{\circ}.03$.

We may have $[n_{11},n_{hh},n_{1h}]=[0,11,4]$ or $[n_{11},n_{hh},n_{1h}]=[1,12,2]$, depending on whether the two points of weight $1$ are consecutive in the circular ordering or not.

In the first case we have $\mathcal{A}>366^{\circ}$, in the second case $\mathcal{A}>362^{\circ}$. We reached the desired contradiction. This concludes the proof of fact \ref{f224}.

We continue with the proof of Lemma \ref{224}. Suppose the statement is false. Then, by fact \ref{f224} it follows that at least $12$ points of weight $1/2$ are in the annulus $1\le \rho \le 1.32$. However, the angle determined by two consecutive such points is at least
$\Phi_q\left(1,1.32\right)=31^{\circ}.18\ldots$. Since $12\cdot \Phi_q\left(1,1.32\right)>374^{\circ}$, the contradiction follows.
\end{proof}
\begin{lemma}\label{421}
It is impossible to have $4$ points of weight $1$ in $p\le \rho \le 1+p$ and $21$ points of weight $1/2$ in $1\le \rho \le 1+p$.
\end{lemma}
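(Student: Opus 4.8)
The plan is to adapt the argument of Lemma~\ref{224}: first confine most of the weight-$1/2$ points to a thin inner annulus by an auxiliary fact, then reach a contradiction by counting the angles those points subtend. Assume for contradiction that there are $4$ points of weight $1$, all in $p\le\rho\le 1+p$, and $21$ points of weight $1/2$, all in $1\le\rho\le 1+p$, obeying the prescribed separations. Fix a threshold $t$ with $t<p$ and $t\le 1+q$ (a value near $1.36$ will do).

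The auxiliary fact, modeled on Fact~\ref{f224}, is: one cannot have $4$ points of weight $1$ in $p\le\rho\le 1+p$ together with $10$ points of weight $1/2$ in $t\le\rho\le 1+p$. To prove this I would list these $14$ points in circular order and look at the $14$ consecutive angular gaps. Let $n_{11}$, $n_{1h}$, $n_{hh}$ count the gaps whose endpoints are two weight-$1$ points, one of each weight, and two weight-$1/2$ points, respectively. Since $t<p$, Remark~\ref{one} makes every $n_{11}$-gap at least $\Phi_p(p,1+p)$ and every $n_{1h}$-gap at least $\Phi_p(t,1+p)$, while Lemma~\ref{two} (whose hypothesis $t\le 1+q$ holds) makes every $n_{hh}$-gap at least $\Phi_q(t,1+q)$. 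Counting the gaps incident to the four weight-$1$ points gives $2n_{11}+n_{1h}=8$, and as not all $14$ points have weight $1$ we have $n_{11}\le 3$; thus $(n_{11},n_{1h})$ is one of $(0,8),(1,6),(2,4),(3,2)$, with $n_{hh}=6+n_{11}$ in each case. For every one of these four profiles one verifies
\[
n_{11}\,\Phi_p(p,1+p)+n_{1h}\,\Phi_p(t,1+p)+n_{hh}\,\Phi_q(t,1+q)>360^{\circ},
\]
the tightest being $(n_{11},n_{1h},n_{hh})=(3,2,9)$ (all four weight-$1$ points consecutive). This contradiction proves the fact.

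Returning to the original configuration, the fact shows that at most $9$ of the $21$ weight-$1/2$ points lie in $t\le\rho\le 1+p$, since otherwise $10$ of them together with the $4$ weight-$1$ points would violate it. Consequently at least $12$ weight-$1/2$ points lie in $1\le\rho\le t$. Any two of these that are consecutive in the circular ordering subtend an angle of at least $\Phi_q(1,t)$, and when $t\approx 1.36$ one has $\Phi_q(1,t)>30^{\circ}$; hence these $12$ points alone would demand more than $12\cdot 30^{\circ}=360^{\circ}$ of angle about the origin, which is impossible. This finishes the proof.

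The one delicate point is the choice of $t$, which is pressed from both sides: enlarging $t$ confines the outer weight-$1/2$ points to a narrower shell and so strengthens the auxiliary fact, but it simultaneously widens the inner annulus and weakens the final count. The worst profile $(3,2,9)$ in the fact clears $360^{\circ}$ only by a slim margin, and the number $10$ in the fact is essentially forced: with $9$ the fact fails, while with $11$ the final count is left with only $11$ inner points, which is too few. Checking that one value of $t$ (say $t=1.36$) satisfies both constraints once all the $\Phi$-values are evaluated precisely is the single spot where genuine care is required.
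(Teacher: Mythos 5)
Your proof is correct and follows essentially the same two-stage argument as the paper: an auxiliary fact bounding how many weight-$1/2$ points can share the outer annulus with the four weight-$1$ points (via the $n_{11},n_{hh},n_{1h}$ gap count), followed by an angle count on the points forced into the inner annulus. The only difference is parametric --- you take the threshold $t\approx 1.36$ with a $10$-outer/$12$-inner split, whereas the paper takes $t=1.25$ with an $11$-outer/$11$-inner split (so your aside that an ``$11$ inner points'' count is necessarily too few is not quite right, though it does not affect your argument).
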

\begin{proof}
We will need the following
\begin{fact}\label{f421}
It is impossible to have $4$ points of weight $1$ in $p\le \rho \le 1+p$, and $11$ points of weight $1/2$ in $1.25\le \rho\le 1+p$.
\end{fact}
We use the same approach as in the proof of fact \ref{f224}. Sort the points in circular order, and look at the $15$ angles determined by pairs of consecutive points.

Let $n_{11}$ be the number of angles determined by two consecutive points of weight $1$. Each of these angles is at least $\Phi_p\left(p,1+p\right)=31^{\circ}.25\ldots$.

Similarly, let $n_{hh}$ denote the number of angles determined by two consecutive points of weight $1/2$. Each of these angles is at least
$\Phi_q\left(1.25, 1+q\right)=21^{\circ}.31\ldots$.

Finally, let $n_{1h}$ be the number of angles determined by two consecutive points of different weight. Each of these angles is at least $\Phi_p\left(1.25,1+p\right)=26^{\circ}.69\ldots$.
Hence, the sum of the $15$ angles around the origin is at least $
\mathcal{A}=n_{11}\cdot 31^{\circ}.25+n_{hh}\cdot 21^{\circ}.31+n_{1h}\cdot 26^{\circ}.69$.

We have four possibilities for $[n_{11},n_{hh},n_{1h}]$: $[0, 7, 8], [1, 8, 6], [2, 9, 4]$, or $[3, 10, 2]$. The minimum of $\mathcal{A}$ is reached when $[n_{11},n_{hh},n_{1h}]=[3, 10, 2]$, which corresponds to the case when the four points of weight $1$ are consecutive. But even in this case we have $\mathcal{A}=3\cdot 31^{\circ}.25 +10\cdot 21^{\circ}.31 +2\cdot 26^{\circ}.69=360^{\circ}.23>360^{\circ}$, a contradiction.
The proof of fact \ref{f421} is complete.

We return now to the proof of Lemma \ref{421}.
Suppose the statement is false. Then, by fact \ref{f421} it follows that at least $11$ points of weight $1/2$ are in the annulus $1\le \rho \le 1.25$. However, the angle determined by two consecutive such points is at least
$\Phi_q\left(1,1.25\right)=32^{\circ}.98\ldots$. Since $11\cdot \Phi_q\left(1,1.25\right)>362^{\circ}$, the contradiction follows.
\end{proof}
\begin{lemma}\label{520}
It is impossible to have $5$ points of weight $1$ in $p\le \rho \le 1+p$ and $20$ points of weight $1/2$ in $1\le \rho \le 1+p$.
\end{lemma}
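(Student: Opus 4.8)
The plan is to mimic the proof of Lemma \ref{421} almost verbatim, replacing ``$4$ points of weight $1$'' by ``$5$ points of weight $1$'' and adjusting the counts of points of weight $1/2$ accordingly. Concretely, I would first isolate a \emph{Fact} playing the role that Fact \ref{f421} plays for Lemma \ref{421}: it is impossible to have $5$ points of weight $1$ in $p\le\rho\le 1+p$ together with $10$ points of weight $1/2$ in $1.25\le\rho\le 1+p$. Once this Fact is available, Lemma \ref{520} follows by the same thin-annulus squeeze used at the end of Lemma \ref{421}.

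To prove the Fact, suppose such a configuration of $5+10=15$ points exists, sort them in circular order, and look at the $15$ angles subtended by consecutive pairs. As in Fact \ref{f421}, let $n_{11}$, $n_{hh}$, $n_{1h}$ count the angles joining two points of weight $1$, two points of weight $1/2$, and two points of different weights, respectively. Each angle of the first kind is at least $\Phi_p(p,1+p)=31^{\circ}.25\ldots$; by Remark \ref{one} each angle of the third kind is at least $\Phi_p(1.25,1+p)=26^{\circ}.69\ldots$; and by Lemma \ref{two} (applicable since $1.25\le 1+q$) each angle of the second kind is at least $\Phi_q(1.25,1+q)=21^{\circ}.31\ldots$. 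If the five points of weight $1$ form $c$ maximal blocks in the circular order, with $1\le c\le 5$, then $n_{11}=5-c$, $n_{1h}=2c$, $n_{hh}=10-c$, so the angle sum is $\mathcal{A}=(5-c)\,31^{\circ}.25+(10-c)\,21^{\circ}.31+2c\cdot 26^{\circ}.69$. Passing from $c$ to $c+1$ changes $\mathcal{A}$ by $2\cdot 26^{\circ}.69-31^{\circ}.25-21^{\circ}.31>0$, so $\mathcal{A}$ is minimized at $c=1$, where $\mathcal{A}=4\cdot 31^{\circ}.25+9\cdot 21^{\circ}.31+2\cdot 26^{\circ}.69>370^{\circ}$. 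Since $\mathcal{A}>360^{\circ}$ in every case, the Fact is proved.

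With the Fact in hand I would finish exactly as in Lemma \ref{421}. Assume Lemma \ref{520} fails, so there are $5$ points of weight $1$ in $p\le\rho\le 1+p$ and $20$ points of weight $1/2$ in $1\le\rho\le 1+p$. By the Fact, at most $9$ of the points of weight $1/2$ can lie in $1.25\le\rho\le 1+p$ — ten of them, together with the five points of weight $1$, would contradict the Fact. Hence at least $20-9=11$ of the points of weight $1/2$ lie in the annulus $1\le\rho\le 1.25$. But two consecutive such points subtend an angle of at least $\Phi_q(1,1.25)=32^{\circ}.98\ldots$ (as in the proof of Lemma \ref{027}), and $11\cdot\Phi_q(1,1.25)>362^{\circ}$, which is impossible.

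I do not expect a genuine obstacle: every numerical quantity needed here has already appeared in Lemmas \ref{027} and \ref{421}. The only steps requiring a little care are (i) checking that the partition type $[n_{11},n_{hh},n_{1h}]=[5-c,\,10-c,\,2c]$ really is forced and that $c=1$ yields the minimum of $\mathcal{A}$, and (ii) verifying that the intermediate radius $1.25$ is simultaneously small enough that $11\,\Phi_q(1,1.25)>360^{\circ}$ and large enough that each of the five cases of the Fact stays above $360^{\circ}$; both are routine calculations, and there is ample slack in each.
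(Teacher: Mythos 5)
Your proposal is correct and follows essentially the same route as the paper: an auxiliary fact ruling out $5$ points of weight $1$ together with $10$ points of weight $1/2$ in an outer annulus, followed by squeezing $11$ points of weight $1/2$ into the inner annulus and using $11\cdot\Phi_q(1,\cdot)>360^{\circ}$. The only difference is your cut-off $1.25$ (reusing the constants $21^{\circ}.31$ and $26^{\circ}.69$ from Fact \ref{f421}) versus the paper's $1.23$, which flips the extremal partition type from $[0,5,10]$ to $[4,9,2]$ but leaves every case comfortably above $360^{\circ}$, so the argument goes through.
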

\begin{proof}
As earlier, we start with a
\begin{fact}\label{f520}
It is impossible to have $5$ points of weight $1$ in $p\le \rho \le 1+p$ and $10$ points of weight $1/2$ in $1.23\le \rho\le 1+p$.
\end{fact}
We follow the same approach as in facts \ref{f224} and \ref{f421}.

Let $n_{11}$ be the number of angles determined by two consecutive points of weight $1$. Each of these angles is at least $\Phi_p\left(p,1+p\right)=31^{\circ}.25\ldots$.

Similarly, let $n_{hh}$ denote the number of angles determined by two consecutive points of weight $1/2$. Each of these angles is at least
$\Phi_q\left(1.23, 1+q\right)=20^{\circ}.77\ldots$.

Finally, let $n_{1h}$ be the number of angles determined by two consecutive points of different weight. Each of these angles is at least $\Phi_p\left(1.23,1+p\right)=25^{\circ}.90\ldots$.

Hence, the sum of the $15$ angles around the origin is at least $\mathcal{A}=n_{11}\cdot 31^{\circ}.25+n_{hh}\cdot 20^{\circ}.77+n_{1h}\cdot 25^{\circ}.9$.

We have five possibilities for $[n_{11},n_{hh},n_{1h}]$: $[0, 5, 10], [1, 6, 8], [2, 7, 6], [3, 8, 4]$, or $[4,9,2]$.

The minimum of $\mathcal{A}$ is reached when $[n_{11},n_{hh},n_{1h}]=[0, 5, 10]$, which corresponds to the case when no two points of weight $1$ are consecutive. But even in this case we have $\mathcal{A}=5\cdot 20^{\circ}.77 +10\cdot 25^{\circ}.9=362^{\circ}.85>360^{\circ}$, a contradiction.

The proof of fact \ref{f520} is complete. We can now prove Lemma \ref{520}.
Suppose the statement is false. Then, by fact \ref{f520} it follows that at least $11$ points of weight $1/2$ are in the annulus $1\le \rho \le 1.23$. However, the angle determined by two consecutive such points is at least
$\Phi_q\left(1,1.23\right)=33^{\circ}.53\ldots$. Since $11\cdot \Phi_q\left(1,1.23\right)>368^{\circ}$, the contradiction follows.
\end{proof}
Therefore the following cases suffice to prove Theorem \ref{weight}. We will show that one cannot have
\begin{itemize}
\item{$6$ points of weight $1$ and $18$ of weight $1/2$.}
\item{$7$ points of weight $1$ and $16$ of weight $1/2$.}
\item{$8$ points of weight $1$ and $14$ of weight $1/2$.}
\item{$9$ points of weight $1$ and $12$ of weight $1/2$.}
\item{$10$ points of weight $1$ and $10$ of weight $1/2$.}
\item{$11$ points of weight $1$ and $8$ of weight $1/2$.}
\end{itemize}
The proofs are contained in the following sections.


\section{\bf 6 points of weight 1 and 18 points of weight 1/2}
\begin{fact}\label{f618akelvin}
It is impossible to have $6$ points of weight $1$ in $p\le \rho \le 1+p$ and $8$ points of weight $1/2$ in $1.259\le \rho\le 1+p$.
\end{fact}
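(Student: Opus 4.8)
The plan is to reproduce, essentially verbatim in spirit, the argument used for Facts \ref{f224}, \ref{f421}, and \ref{f520}. Assume for contradiction that one can place $14$ points as claimed: six points of weight $1$ in the annulus $p\le\rho\le 1+p$ and eight points of weight $1/2$ in the annulus $1.259\le\rho\le 1+p$. Since no two points share an amplitude, they admit a circular ordering; I would list the $14$ angles that consecutive points subtend at the origin $O$ and bound each from below according to the weights of its two endpoints, then show the total already exceeds $360^{\circ}$.

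Concretely, let $n_{11}$, $n_{hh}$, $n_{1h}$ count the angles whose two endpoints are, respectively, both of weight $1$, both of weight $1/2$, of different weight. By Remark \ref{one}, an angle of the first type is at least $\Phi_p\left(p,1+p\right)=31^{\circ}.25\ldots$ and one of the third type is at least $\Phi_p\left(1.259,1+p\right)=27^{\circ}.03\ldots$. For an angle of the second type I would invoke Lemma \ref{two}: its two endpoints lie in $1.259\le\rho\le 1+p$ and $1.259<1+q$, so the angle is at least $\Phi_q\left(1.259,1+q\right)=21^{\circ}.53\ldots$. If the six weight-$1$ points occupy $k$ maximal runs around the circle (necessarily $1\le k\le 6$), then $[n_{11},n_{hh},n_{1h}]=[\,6-k,\ 8-k,\ 2k\,]$, so the sum of the $14$ angles is at least
\[
\mathcal{A}(k)=(6-k)\cdot 31^{\circ}.25+(8-k)\cdot 21^{\circ}.53+2k\cdot 27^{\circ}.03 .
\]
Because $2\cdot 27^{\circ}.03>31^{\circ}.25+21^{\circ}.53$, the quantity $\mathcal{A}(k)$ increases in $k$, so its minimum over $k\in\{1,\ldots,6\}$ is attained at $k=1$, i.e.\ when the six weight-$1$ points are mutually consecutive. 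Even there,
\[
\mathcal{A}(1)=5\cdot 31^{\circ}.25+7\cdot 21^{\circ}.53+2\cdot 27^{\circ}.03=361^{\circ}.0\ldots>360^{\circ},
\]
the desired contradiction.

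There is no conceptual obstacle here; the only thing requiring care is the arithmetic. All three values of $\Phi$ come from (\ref{phi}) with $p=1.409$ and $q=0.7097\ldots$, the $\arccos$ branch being the binding one in each case, after which one checks a monotone linear minimization over the six admissible triples $[n_{11},n_{hh},n_{1h}]$. The margin in $\mathcal{A}(1)$ is thin — only about $1^{\circ}$ — so one should keep enough numerical accuracy, especially in $\Phi_q(1.259,1+q)$; presumably the inner radius $1.259$ is chosen precisely so that this margin stays positive while the complementary annulus $1\le\rho\le 1.259$ remains narrow enough for the follow-up count that will reduce the full ``$6$ and $18$'' case to this Fact.
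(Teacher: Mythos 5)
Your proposal is correct and is essentially the paper's own proof: the same circular ordering, the same three lower bounds $\Phi_p(p,1+p)$, $\Phi_q(1.259,1+q)$, $\Phi_p(1.259,1+p)$, and the same six admissible triples $[n_{11},n_{hh},n_{1h}]$ (your run-count parametrization $[6-k,8-k,2k]$ is just a tidier way to list them and to see the minimum occurs at $[5,7,2]$). Your numerics are in fact slightly cleaner than the paper's, which states $\Phi_q(1.259,1+q)=21^{\circ}.59$ but computes with $21^{\circ}.53$; either way the total exceeds $360^{\circ}$, so the contradiction stands.
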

\begin{proof}
Let $n_{11}$ be the number of angles determined by two consecutive points of weight $1$. Each of these angles is at least $\Phi_p\left(p,1+p\right)=31^{\circ}.25\ldots$.

Let $n_{hh}$ denote the number of angles determined by two consecutive points of weight $1/2$. Each of these angles is at least
$\Phi_q\left(1.259, 1+q\right)=21^{\circ}.59\ldots$.

Let $n_{1h}$ be the number of angles determined by two consecutive points of different weight. Each of these angles is at least $\Phi_p\left(1.259,1+p\right)=27^{\circ}.03\ldots$.

Hence, the sum of the $14$ angles around the origin is at least $\mathcal{A}=n_{11}\cdot 31^{\circ}.25+n_{hh}\cdot 21^{\circ}.59+n_{1h}\cdot 27^{\circ}.03$.

We have six possibilities for $[n_{11},n_{hh},n_{1h}]$: $[0, 2, 12], [1, 3, 10], [2, 4, 8], [3, 5, 6], [4,6,4]$, or $[5,7,2]$.

The minimum of $\mathcal{A}$ is reached when $[n_{11},n_{hh},n_{1h}]=[5, 7, 2]$, which corresponds to the case when all points of weight $1$ are consecutive. But even in this case we have $\mathcal{A}=5\cdot 31^{\circ}.25 +7\cdot 21^{\circ}.53+2\cdot 27^{\circ}.03=361^{\circ}.09>360^{\circ}$, a contradiction.
\end{proof}

\begin{fact}\label{f618bkelvin}
It is impossible to have $11$ points of weight $1/2$ in $1\le \rho\le 1.259$.
\end{fact}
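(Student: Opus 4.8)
The plan is to run the \emph{fitting points into an annulus} argument, exactly as in the proofs of Lemmas \ref{027}, \ref{224}, \ref{421} and \ref{520}, but now applied to the thin annulus $1 \le \rho \le 1.259$. Suppose for contradiction that $11$ points of weight $1/2$ lie in this annulus. Since $1.259 < 1+q = 1.7097\ldots$, every such point has $\rho \le 1+q$, so the first case of Lemma \ref{two} applies to any two of them (equivalently, one may invoke Lemma \ref{BEsoss} directly, since the two points lie in $1 \le \rho \le 1.259$ and are at mutual distance at least $q$): the angle they subtend at the origin is at least $\Phi_q(1,1.259)$.

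Next I would evaluate this quantity. In formula (\ref{phi}) take $d = q = 0.7097\ldots$, $r = 1$, $R = 1.259$; one checks that $0 \le R-d \le r \le R$, so the formula is valid, and a direct computation gives $\Phi_q(1,1.259) = 32^{\circ}.74\ldots$, the minimum being realized by the $2\arcsin(d/2R)$ branch, just as it was in Lemma \ref{027}.

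Finally, sort the $11$ points by amplitude into a circular order; the $11$ gaps between consecutive points have angles summing to exactly $360^{\circ}$, while each gap is at least $\Phi_q(1,1.259)$. Hence $360^{\circ} \ge 11\cdot\Phi_q(1,1.259) > 360^{\circ}$, a contradiction, which proves the fact. Combined with Fact \ref{f618akelvin}, this also finishes the case of $6$ points of weight $1$ and $18$ of weight $1/2$: with $6$ points of weight $1$ present, at most $7$ of the $18$ points of weight $1/2$ can lie in $1.259 \le \rho \le 1+p$, so at least $11$ of them lie in $1 \le \rho \le 1.259$, which Fact \ref{f618bkelvin} rules out.

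I expect no genuine obstacle here; the one delicate point is that the margin is slim, since $11\cdot 32^{\circ}.74 \approx 360^{\circ}.2$, so the numerical estimate of $\Phi_q(1,1.259)$ must be carried to enough decimal places to guarantee the strict inequality. This is, in fact, why the inner radius appearing in the companion Fact \ref{f618akelvin} was chosen to be $1.259$ rather than, say, $1.26$: with $R = 1.26$ one would get $11\cdot\Phi_q(1,1.26) < 360^{\circ}$ and the argument would collapse.
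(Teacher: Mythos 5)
Your proof is correct and is essentially the paper's own argument: place the $11$ points in circular order, bound each consecutive angle below by $\Phi_q(1,1.259)=32^{\circ}.74\ldots$ (indeed given by the $2\arcsin$ branch), and note $11\cdot\Phi_q(1,1.259)>360^{\circ}$. The extra remarks (applicability of the lemma since $1.259<1+q$, and how the fact combines with Fact \ref{f618akelvin}) are accurate but not needed beyond what the paper does.
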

\begin{proof}
Suppose that this is possible. The angle between any two such points is at least $\Phi_q\left(1,1.259\right)=32^{\circ}.74\ldots$. However, $11\cdot \Phi_q\left(1,1.259\right)=360^{\circ}.16>360^{\circ}$, a contradiction.
\end{proof}

We are now in position to prove the main result of this section.
\begin{lemma}\label{618kelvin}
It is impossible to have $6$ points of weight $1$ in $p\le \rho\le 1+p$ and $18$ points of weight $1/2$ in $1\le \rho \le 1+p$.
\end{lemma}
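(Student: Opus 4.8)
The plan is to follow verbatim the two-step template already used for Lemmas \ref{224}, \ref{421}, and \ref{520}: a pigeonhole split of the weight-$1/2$ points across the auxiliary circle $\rho=1.259$, feeding the outer part to Fact \ref{f618akelvin} and the inner part to Fact \ref{f618bkelvin}. The supporting facts have been arranged so that the two thresholds fit together exactly, so no new geometry is needed here.

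First I would assume, for contradiction, that a configuration with $6$ points of weight $1$ in $p\le\rho\le 1+p$ and $18$ points of weight $1/2$ in $1\le\rho\le 1+p$ exists. I then partition the $18$ weight-$1/2$ points into those lying in the inner annulus $1\le\rho\le 1.259$ and those lying in the outer annulus $1.259\le\rho\le 1+p$. (A point with $\rho=1.259$ exactly may be assigned to either part; this boundary case is immaterial.)

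Next I would bound the outer part. All six weight-$1$ points still lie in $p\le\rho\le 1+p$, so Fact \ref{f618akelvin} forbids $8$ or more weight-$1/2$ points in $1.259\le\rho\le 1+p$; hence at most $7$ of them lie there. Therefore at least $18-7=11$ of the weight-$1/2$ points lie in the inner annulus $1\le\rho\le 1.259$. But Fact \ref{f618bkelvin} states precisely that $11$ weight-$1/2$ points cannot fit in $1\le\rho\le 1.259$, since $11\,\Phi_q(1,1.259)>360^{\circ}$. This is the desired contradiction, which finishes the proof of Lemma \ref{618kelvin}.

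I do not expect any genuine obstacle at this stage: the entire difficulty has been pushed into the two facts, whose proofs are the usual case analysis over the admissible triples $[n_{11},n_{hh},n_{1h}]$ (worst case all weight-$1$ points consecutive for Fact \ref{f618akelvin}) and a one-line angle sum for Fact \ref{f618bkelvin}. The only thing worth double-checking is the bookkeeping of the split: that $18=11+7$, that Fact \ref{f618akelvin} really rules out $8$ (not some other number) in the outer annulus, and that Fact \ref{f618bkelvin} really rules out $11$ in the inner annulus — which is exactly what those statements assert, so the two pieces mesh without slack.
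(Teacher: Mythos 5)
Your proposal is correct and coincides with the paper's own proof: the paper likewise deduces from Fact \ref{f618akelvin} that at most $7$ of the weight-$1/2$ points lie in $1.259\le\rho\le 1+p$, hence at least $11$ lie in $1\le\rho\le 1.259$, contradicting Fact \ref{f618bkelvin}. You have merely spelled out the pigeonhole bookkeeping that the paper leaves implicit.
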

\begin{proof}
Assume it is possible. By fact \ref{f618akelvin}, at least $11$ points of weight $1/2$ are in $1\le\rho\le 1.259$. However, this contradicts Fact \ref{f618bkelvin}. Therefore, the proof is complete.
\end{proof}

\section{\bf 7 points of weight 1 and 16 points of weight 1/2}

This case is very similar to the previous one. Again, we need three preliminary facts.
\begin{fact}\label{f716a}
It is impossible to have $7$ points of weight $1$ in $p\le \rho \le 1+p$ and $8$ points of weight $1/2$ in $1.2\le \rho\le 1+p$.
\end{fact}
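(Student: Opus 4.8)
The plan is to run the same ``fitting points into annuli'' argument used for Facts \ref{f224}, \ref{f421}, \ref{f520}, and \ref{f618akelvin}. Assume for contradiction that such a configuration of $7+8=15$ weighted points exists; sort them by amplitude into their circular ordering and consider the $15$ angles they subtend at the origin between pairs of consecutive points. These angles tile the full turn, so they add up to exactly $360^{\circ}$; the goal is to bound each one from below and show that the sum must exceed $360^{\circ}$, a contradiction.

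First I would record the three relevant angular bounds for $p=1.409$, $q=1/p$. For two consecutive points of weight $1$: both lie in $p\le\rho\le 1+p$ and are at distance at least $p$, so by Remark \ref{one} the angle between them is at least $\Phi_p(p,1+p)=31^{\circ}.25\ldots$. For two consecutive points of weight $1/2$: both lie in $1.2\le\rho\le 1+p$, and since the inner radius $1.2$ is below $1+q=1.709\ldots$, Lemma \ref{two} applies with $R=1+q$ and gives an angle at least $\Phi_q(1.2,1+q)=19^{\circ}.85\ldots$. For two consecutive points of different weight: one of them has weight $1$, so Remark \ref{one} applies and gives an angle at least $\Phi_p(1.2,1+p)=24^{\circ}.56\ldots$.

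Next I would bookkeep the interleaving. Viewing the circular ordering as an alternating cyclic sequence of maximal runs of weight-$1$ points and maximal runs of weight-$1/2$ points, let $k$ be the common number of runs of each kind, $1\le k\le 7$. Then the number of consecutive equal-weight-$1$ pairs is $n_{11}=7-k$, the number of equal-weight-$1/2$ pairs is $n_{hh}=8-k$, and the number of mixed pairs is $n_{1h}=2k$. Hence the sum of the $15$ angles is at least
\[
\mathcal{A}(k)=(7-k)\cdot 31^{\circ}.25+(8-k)\cdot 19^{\circ}.85+2k\cdot 24^{\circ}.56=377^{\circ}.55-1^{\circ}.98\,k .
\]
The coefficient of $k$ is negative, so $\mathcal{A}$ is minimized at $k=7$ --- the case when no two weight-$1$ points are adjacent --- where $\mathcal{A}(7)>363^{\circ}>360^{\circ}$. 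This contradiction proves the fact.

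There is no genuine difficulty here beyond keeping the arithmetic honest: the only thing to verify is that the smallest of the three bounds, $\Phi_q(1.2,1+q)$, is large enough that even the interleaving most favorable to the adversary ($k=7$, in which that small bound is used only once while the mixed-pair bound $\Phi_p(1.2,1+p)$ is used fourteen times) still forces the angle sum past $360^{\circ}$; with the choice $p=1.409$ this holds with a margin of more than three degrees. One should also note that all three angle bounds are legitimately applicable, since every weight-$1$ point lies in $[p,1+p]$, every weight-$1/2$ point lies in $[1.2,1+p]$, and $1.2<1+q$, so the hypothesis $\min(a,c)\le 1+q$ of Lemma \ref{two} is met regardless of where the weight-$1/2$ points actually sit.
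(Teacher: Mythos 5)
Your proposal is correct and follows essentially the same route as the paper: the same three lower bounds $\Phi_p(p,1+p)$, $\Phi_q(1.2,1+q)$, $\Phi_p(1.2,1+p)$, and the same enumeration of $[n_{11},n_{hh},n_{1h}]$ (your run parameter $k$ just lists the paper's seven triples $[7-k,8-k,2k]$ and makes the minimization at $k=7$, i.e.\ $[0,1,14]$, transparent since $\mathcal{A}$ is linear in $k$). The arithmetic checks out, with the minimum exceeding $363^{\circ}$, matching the paper's contradiction.
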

\begin{proof}
As in the proof of \ref{f618akelvin}, everything reduces to estimate a sum of the form\\
$\mathcal{A}=n_{11}\cdot \Phi_p\left(p,1+p\right)+n_{hh}\cdot \Phi_q\left(1.2,1+q\right)+n_{1h}\cdot \Phi_p\left(1.2,1+p\right)$,
where $\Phi_p\left(p,1+p\right)=31^{\circ}.25\ldots, \\
\Phi_q\left(1.2,1+q\right)= 19^{\circ}.85\ldots, \Phi_p\left(1.2,1+p\right)=24^{\circ}.57\ldots $, and $[n_{11},n_{hh},n_{1h}]$
equals one of the \\following: $[0, 1, 14], [1, 2, 12], [2, 3, 10], [3, 4, 8], [4,5,6], [5,6,4]$, or $[6,7,2]$.

It turns out the minimum of $\mathcal{A}$ is attained when $[n_{11},n_{hh},n_{1h}]=[0, 1, 14]$; that is, the points of weight $1$ and the points of weight $1/2$ alternate in the circular ordering. In this case we have
$\mathcal{A}> 19^{\circ}.85+14\cdot 24^{\circ}.57=363^{\circ}.83$, a contradiction.
\end{proof}
\begin{fact}\label{f716b}
It is impossible to have $11$ points of weight $1/2$ in $1\le \rho\le 1.33$ such that $9$ of them lie in $1\le \rho\le 1.2$.
\end{fact}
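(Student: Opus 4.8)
The plan is to reuse the circular-ordering and angle-counting machinery of Facts \ref{f716a} and \ref{f618bkelvin}. Suppose for contradiction that such a configuration of $11$ points of weight $1/2$ exists; at least $9$ of them lie in the inner annulus $1\le\rho\le 1.2$, and therefore at most $2$ lie in the outer annulus $1.2\le\rho\le 1.33$. Since moving a point from the inner to the outer annulus only decreases every $\Phi_q$ value that will appear below, it suffices to treat the extremal case of exactly $9$ \emph{inner} points and $2$ \emph{outer} points. Sort all $11$ points by amplitude and consider the $11$ angles subtended at the origin by consecutive pairs. Every such pair consists of two points of weight $1/2$ lying in $1\le\rho\le 1.33$, hence strictly inside $\rho\le 1+q$ (recall $1+q=1.7097\ldots$ from (\ref{pq})); so Lemma \ref{two} applies, and because $\max(b,d)\le 1.33<1+q$ for every such pair, the bound it gives is simply $\Phi_q$ of the smallest annulus containing both points. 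Thus a gap between two inner points subtends at least $\Phi_q(1,1.2)$, a gap between an inner and an outer point at least $\Phi_q(1,1.33)$, and a gap between two outer points at least $\Phi_q(1.2,1.33)$.

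Next I would pin down the admissible gap-type vectors $[n_{ii},n_{io},n_{oo}]$, where the subscripts record whether the two endpoints of a gap are inner or outer. Counting point-gap incidences gives $2n_{ii}+n_{io}=18$ and $n_{io}+2n_{oo}=4$; moreover two points placed on a cyclic sequence of length $11$ can be mutually adjacent along at most one gap, so $n_{oo}\le 1$. Hence only two vectors survive: $[7,4,0]$, when the two outer points are separated in the circular order, and $[8,2,1]$, when they are consecutive. Evaluating the function from (\ref{phi}) with $q$ as in (\ref{pq}) gives $\Phi_q(1,1.2)=34^{\circ}.40\ldots$, $\Phi_q(1,1.33)=30^{\circ}.95\ldots$, and $\Phi_q(1.2,1.33)=30^{\circ}.95\ldots$ (in each case the $2\arcsin$ branch of $\Phi$ is the smaller one). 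The total angle around the origin is then at least $\mathcal{A}=7\cdot 34^{\circ}.40+4\cdot 30^{\circ}.95>364^{\circ}$ in the first case and $\mathcal{A}=8\cdot 34^{\circ}.40+3\cdot 30^{\circ}.95>368^{\circ}$ in the second. In both cases $\mathcal{A}>360^{\circ}$, which is the desired contradiction.

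The step I expect to require genuine care is the combinatorial reduction: first, justifying that the worst case really is $9$ inner and $2$ outer points rather than some split that manufactures more of the cheap outer-outer gaps (there simply are not enough outer points), and second, confirming $n_{oo}\le 1$ so that $[7,4,0]$, and not something smaller, is the true minimiser of $\mathcal{A}$. Everything downstream is a routine numerical check, and the slack ($364^{\circ}$ against $360^{\circ}$) is comfortable enough that no sharpening of Lemma \ref{BEsoss} or finer radius bookkeeping is needed. I would also note why the hypothesis is phrased with the threshold ``$9$ of them in $1\le\rho\le 1.2$'': this is precisely the borderline distribution that is not already excluded by the cruder counts (for instance Fact \ref{f618bkelvin} already rules out $11$ points in a thinner annulus), and it is exactly what is fed into the proof of the $7$-weight-$1$ and $16$-weight-$1/2$ case.
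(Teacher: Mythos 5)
Your proof is correct and follows essentially the same route as the paper: sort the points circularly, separate the $11$ gaps into those between two of the nine points known to lie in $1\le\rho\le 1.2$ and those involving the other two points, split into the consecutive/non-consecutive cases, and check that the angle sum exceeds $360^{\circ}$ (your totals $364^{\circ}$ and $368^{\circ}$ match the paper's $364^{\circ}.6$ and $368^{\circ}.05$). The only cosmetic difference is your ``move the two extra points to the outer annulus'' framing with the bound $\Phi_q(1.2,1.33)$; the paper simply bounds every gap touching one of the two uncolored points by $\Phi_q(1,1.33)$, which is numerically the same value here, so nothing substantive changes.
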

\begin{proof}
Suppose that this is possible. Color $9$ of the points in $1\le \rho\le 1.2$ red, and the remaining two points blue. If the two blue
points are not consecutive, then the sum of the $11$ angles is at least
$4\Phi_q\left(1,1.33\right)+7\Phi_q\left(1,1.2\right)=364^{\circ}.606\ldots$. Otherwise, the sum of the angles is at least $3\Phi_q\left(1,1.33\right)+8\Phi_q\left(1,1.2\right)=368^{\circ}.057\ldots$. In either case, we obtain a contradiction.
\end{proof}
\begin{fact}\label{f716c}
It is impossible to have $7$ points of weight $1$ in $p\le \rho \le 1+p$ and $6$ points of weight $1/2$ in $1.33\le \rho\le 1+p$.
\end{fact}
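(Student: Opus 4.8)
\emph{Proof proposal.} The plan is to reuse verbatim the template already developed for Facts \ref{f618akelvin} and \ref{f716a}. Suppose, for the sake of contradiction, that the stated configuration of $13$ points exists: $7$ points of weight $1$ in $p\le\rho\le 1+p$ and $6$ points of weight $1/2$ in $1.33\le\rho\le 1+p$. Sort all $13$ points in circular order and consider the $13$ angles subtended at the origin by consecutive pairs. As before, let $n_{11}$, $n_{hh}$, $n_{1h}$ denote the number of these angles whose two endpoints are, respectively, both of weight $1$, both of weight $1/2$, or of different weights, so that $n_{11}+n_{hh}+n_{1h}=13$.

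Next I would record the three relevant lower bounds. For a gap between two points of weight $1$: both lie in $p\le\rho\le 1+p$, so by Remark \ref{one} the angle is at least $\Phi_p(p,1+p)=31^{\circ}.25\ldots$. For a gap between points of different weights: one endpoint (of weight $1$) lies in $p\le\rho\le 1+p$, the other (of weight $1/2$) in $1.33\le\rho\le 1+p$, so Remark \ref{one} gives an angle at least $\Phi_p(\min(p,1.33),1+p)=\Phi_p(1.33,1+p)=29^{\circ}.32\ldots$. For a gap between two points of weight $1/2$: both lie in $1.33\le\rho\le 1+p$ and, crucially, $1.33\le 1+q$, so Lemma \ref{two} applies and yields an angle at least $\Phi_q(1.33,\min(1+q,1+p))=\Phi_q(1.33,1+q)=22^{\circ}.93\ldots$. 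Hence the sum $\mathcal A$ of the $13$ angles satisfies $\mathcal A\ge 31^{\circ}.25\,n_{11}+22^{\circ}.93\,n_{hh}+29^{\circ}.32\,n_{1h}$.

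The feasible triples are exactly $[n_{11},n_{hh},n_{1h}]=[7-k,\,6-k,\,2k]$ for $k=1,\dots,6$, where $k$ is the common number of maximal monochromatic blocks in the circular ordering. Since $31^{\circ}.25+22^{\circ}.93<2\cdot 29^{\circ}.32$, the coefficient of $k$ in the lower bound for $\mathcal A$ is positive, so the minimum occurs at $k=1$, i.e.\ when all $7$ points of weight $1$ are consecutive and all $6$ points of weight $1/2$ are consecutive, giving $[n_{11},n_{hh},n_{1h}]=[6,5,2]$. Even in that case $\mathcal A\ge 6\cdot 31^{\circ}.25+5\cdot 22^{\circ}.93+2\cdot 29^{\circ}.32=360^{\circ}.79>360^{\circ}$, the desired contradiction.

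The obstacle here is only bookkeeping, as in the companion facts: one must check that the hypotheses of Remark \ref{one} and Lemma \ref{two} genuinely hold for every gap (in particular that $1.33\le 1+q$, which legitimizes the $n_{hh}$ estimate and the passage to $\Phi_q(1.33,1+q)$), enumerate the block patterns correctly, and evaluate the three $\Phi$-values to two decimals. The only mildly delicate point is that the slack is thin — about $0^{\circ}.8$ — which is exactly why the threshold $1.33$, rather than a larger radius, is used to split off the inner annulus in the enclosing lemma; so the three $\arccos/\arcsin$ evaluations should be carried out carefully.
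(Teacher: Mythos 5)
Your proposal is correct and follows essentially the same route as the paper: the same circular-ordering count with the three bounds $\Phi_p(p,1+p)$, $\Phi_q(1.33,1+q)$, $\Phi_p(1.33,1+p)$, the same list of feasible triples $[n_{11},n_{hh},n_{1h}]$, and the same worst case $[6,5,2]$ giving $360^{\circ}.79>360^{\circ}$. The only difference is cosmetic: you justify why the minimum occurs at $[6,5,2]$ via the block parameter $k$ and the inequality $31^{\circ}.25+22^{\circ}.93<2\cdot 29^{\circ}.32$, whereas the paper simply asserts it.
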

\begin{proof}
As above, everything reduces to estimate a sum of the form\\
$\mathcal{A}=n_{11}\cdot \Phi_p\left(p,1+p\right)+n_{hh}\cdot \Phi_q\left(1.33,1+q\right)+n_{1h}\cdot \Phi_p\left(1.33,1+p\right)$, where \\
$\Phi_p\left(p,1+p\right)=31^{\circ}.25\ldots, \Phi_q\left(1.33,1+q\right)= 22^{\circ}.93\ldots, \Phi_p\left(1.33,1+p\right)=29^{\circ}.32\ldots $, and \\$[n_{11},n_{hh},n_{1h}]$ equals one of the following: $[1, 0, 12], [2, 1, 10], [3, 2, 8], [4, 3, 6], [5,4,4]$, or $[6,5,2]$.

It turns out the minimum of $\mathcal{A}$ is attained when $[n_{11},n_{hh},n_{1h}]=[6, 5, 2]$; that is, the points of weight $1$ are consecutive, and the points of weight $1/2$ are also consecutive in the circular ordering. In this case we have
$\mathcal{A}> 6\cdot31^{\circ}.25+5\cdot 22^{\circ}.93+2\cdot 29^{\circ}.32= 360^{\circ}.79$, a contradiction.
\end{proof}
We are now in position to prove the main result of this section.
\begin{lemma}\label{716}
It is impossible to have $7$ points of weight $1$ in $p\le \rho\le 1+p$ and $16$ points of weight $1/2$ in $1\le \rho \le 1+p$.
\end{lemma}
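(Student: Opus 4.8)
The plan is to mimic the structure used for the cases already treated (Lemmas \ref{618kelvin}, \ref{421}, \ref{520}), namely to reduce the impossibility of $7$ points of weight $1$ together with $16$ points of weight $1/2$ to statements about how the weight-$1/2$ points must be squeezed into thin inner annuli, and then derive a contradiction from a single-annulus angle count. Concretely, I would use Fact \ref{f716a}, which says at most $7$ of the $16$ weight-$1/2$ points can lie in $1.2\le\rho\le 1+p$; hence at least $9$ of them lie in $1\le\rho\le 1.2$. Then I would use Fact \ref{f716c}, which says at most $5$ of the weight-$1/2$ points can lie in $1.33\le\rho\le 1+p$; hence at least $11$ of them lie in $1\le\rho\le 1.33$. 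Combining these, we have at least $11$ weight-$1/2$ points in $1\le\rho\le 1.33$ with at least $9$ of them in $1\le\rho\le 1.2$ — which is exactly the configuration ruled out by Fact \ref{f716b}. That gives the contradiction and finishes the lemma.

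Writing it out, the proof would read roughly: \emph{Assume such a configuration exists. By Fact \ref{f716a}, at most $8-1=7$... } — more precisely, since we have $7$ points of weight $1$, Fact \ref{f716a} forbids $8$ points of weight $1/2$ in $1.2\le\rho\le 1+p$, so at least $16-7=9$ of the weight-$1/2$ points lie in the annulus $1\le\rho\le 1.2$. Similarly, Fact \ref{f716c} forbids $6$ points of weight $1/2$ in $1.33\le\rho\le 1+p$, so at least $16-5=11$ of the weight-$1/2$ points lie in $1\le\rho\le 1.33$. But $9$ of these $11$ points lie in $1\le\rho\le 1.2$, contradicting Fact \ref{f716b}. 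Hence no such configuration exists, proving Lemma \ref{716}.

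I do not anticipate a genuine obstacle here, since all the hard work has been front-loaded into the three Facts, and the numerics in those Facts ($19^{\circ}.85$, $24^{\circ}.57$, $32^{\circ}.74$-type bounds derived from $\Phi_p$, $\Phi_q$ with $p=1.409$) were chosen precisely so that the thresholds $1.2$ and $1.33$ interlock. The only point requiring a moment's care is bookkeeping: one must confirm that ``at most $7$ in $1.2\le\rho\le 1+p$'' and ``at most $5$ in $1.33\le\rho\le 1+p$'' together with a total of $16$ force at least $11$ in $1\le\rho\le 1.33$ \emph{and} at least $9$ of those in $1\le\rho\le 1.2$ simultaneously — this is immediate because the first bound gives $\ge 9$ points in $\rho\le 1.2$ (a fortiori in $\rho\le 1.33$) and the second independently gives $\ge 11$ points in $\rho\le 1.33$, and $9\le 11$ so the $9$ inner points are among the $11$. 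If for some reason the chosen radius $1.33$ did not make Fact \ref{f716c} go through cleanly, the fallback would be to re-tune that intermediate radius (as was evidently done to pick $1.259$ in \S5 and $1.25$, $1.23$ in earlier lemmas), but the statement as given should close without modification.

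\begin{proof}
Assume, for contradiction, that there exist $7$ points of weight $1$ in $p\le\rho\le 1+p$ and $16$ points of weight $1/2$ in $1\le\rho\le 1+p$. By Fact \ref{f716a}, one cannot have $8$ points of weight $1/2$ in $1.2\le\rho\le 1+p$; hence at most $7$ of the $16$ points of weight $1/2$ lie there, so at least $9$ of them lie in the annulus $1\le\rho\le 1.2$. By Fact \ref{f716c}, one cannot have $6$ points of weight $1/2$ in $1.33\le\rho\le 1+p$; hence at most $5$ of the $16$ points of weight $1/2$ lie there, so at least $11$ of them lie in the annulus $1\le\rho\le 1.33$. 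Since the $9$ points found in $1\le\rho\le 1.2$ are in particular among the points in $1\le\rho\le 1.33$, we conclude that there are $11$ points of weight $1/2$ in $1\le\rho\le 1.33$ with at least $9$ of them in $1\le\rho\le 1.2$. This contradicts Fact \ref{f716b}, and the proof is complete.
\end{proof}
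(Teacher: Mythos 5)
Your proof is correct and follows essentially the same route as the paper: it combines Facts \ref{f716a}, \ref{f716b} and \ref{f716c} with the same thresholds $1.2$ and $1.33$, merely permuting the last two steps so that Fact \ref{f716b} (rather than Fact \ref{f716c}) supplies the final contradiction, which is just the contrapositive of the paper's ordering. No gap; the counting argument you spell out (selecting $11$ points in $1\le\rho\le 1.33$ containing the $9$ inner ones) is sound.
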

\begin{proof}
Assume it is possible. By fact \ref{f716a} at least $9$ points of weight $1/2$ are in $1\le\rho\le 1.2$. Fact \ref{f716b} implies that it is not possible to have $11$ points of weight $1/2$ in $1\le \rho \le 1.33$; that is, at least $6$ points of weight $1/2$ are in $1.33\le \rho \le 1+p$. However, fact \ref{f716c} shows that this is impossible, so the proof is done.
\end{proof}

\section{{\bf The harder cases}}

In all the cases covered so far our approach was to restrict the range of the points of weight $1/2$ in order to obtain the desired contradiction. Note that other than using the number of points of weight $1$, we never needed more precise information about their positions: it was sufficient to know that all these points lie in the annulus $p\le \rho \le 1+p$.

The situation is going to be different for the remaining cases. This is to be expected: as the number of points of weight $1/2$ becomes smaller, our reasoning has to take into account the increasing number of points of weight $1$. This is naturally going to lead to slightly more laborious proofs. Fortunately, the intermediate results are easy to check.
\section{\bf 8 points of weight 1 and 14 points of weight 1/2}
\begin{fact}\label{f814a}
It is impossible to have $8$ points of weight $1$ in $p\le \rho \le 1+p$ and $6$ points of weight $1/2$ in $1.21\le \rho\le 1+p$.
\end{fact}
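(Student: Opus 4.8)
The plan is to run the same angle-counting scheme used for Facts \ref{f618akelvin}, \ref{f716a} and \ref{f716c}, now with the weight-$1/2$ points confined to the annulus $1.21\le\rho\le 1+p$. Assume for contradiction that $8$ points of weight $1$ (in $p\le\rho\le 1+p$) together with $6$ points of weight $1/2$ (in $1.21\le\rho\le 1+p$) can be placed as described. Sort all $14$ points by amplitude into a circular ordering and examine the $14$ angles they subtend at the origin between consecutive points.

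First I would record the three needed per-angle lower bounds. Two consecutive points of weight $1$ lie in $p\le\rho\le 1+p$ at distance $\ge p$, so by Remark \ref{one} the angle between them is at least $\Phi_p\left(p,1+p\right)=31^{\circ}.25\ldots$. Two consecutive points of weight $1/2$ lie in $1.21\le\rho\le 1+p$ at distance $\ge q$; since $1.21\le 1+q$, Lemma \ref{two} applies and the angle is at least $\Phi_q\left(1.21,\min(1+q,1+p)\right)=\Phi_q\left(1.21,1+q\right)=20^{\circ}.18\ldots$. For a consecutive pair of opposite weights, Remark \ref{one} (applied with the intervals $[p,1+p]$ and $[1.21,1+p]$, using $p>1.21$, and distance $\ge p$) gives at least $\Phi_p\left(1.21,1+p\right)=25^{\circ}.03\ldots$.

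Letting $n_{11},n_{hh},n_{1h}$ count the angles of these three types, the total angle $360^{\circ}$ is at least $\mathcal{A}=n_{11}\cdot 31^{\circ}.25+n_{hh}\cdot 20^{\circ}.18+n_{1h}\cdot 25^{\circ}.03$. If the weight-$1$ points form $k$ maximal blocks in the circular ordering, then the weight-$1/2$ points form exactly $k$ blocks as well, with $1\le k\le 6$, and $n_{11}=8-k$, $n_{hh}=6-k$, $n_{1h}=2k$; so $[n_{11},n_{hh},n_{1h}]$ is one of $[7,5,2],[6,4,4],[5,3,6],[4,2,8],[3,1,10],[2,0,12]$. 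Each step $k\to k+1$ changes $\mathcal{A}$ by $2\cdot 25^{\circ}.03-31^{\circ}.25-20^{\circ}.18=-1^{\circ}.37<0$, so $\mathcal{A}$ is smallest at $k=6$, i.e.\ at the most ``alternating'' configuration $[2,0,12]$ (every weight-$1/2$ point isolated), where $\mathcal{A}=2\cdot 31^{\circ}.25+12\cdot 25^{\circ}.03=362^{\circ}.86\ldots>360^{\circ}$, the desired contradiction.

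The only subtlety, and the only place this is more than a verbatim repetition of the earlier facts, is the weight-$1/2$/weight-$1/2$ bound: one must invoke Lemma \ref{two} rather than Remark \ref{one}, which is legitimate here precisely because the hypothesis puts those points in $1.21\le\rho\le 1+p$ with $1.21<1+q$. Otherwise the argument is the same short linear check, and the margin of more than $2^{\circ}$ in the final inequality is comfortable. As with Facts \ref{f716a}--\ref{f716c}, this fact is a stepping stone: it will be combined with a complementary pigeonhole fact forcing too many weight-$1/2$ points into $1\le\rho\le 1.21$ in order to settle the case of $8$ points of weight $1$ and $14$ of weight $1/2$, in the manner of the proofs of Lemmas \ref{618kelvin} and \ref{716}.
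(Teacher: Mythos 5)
Your proposal is correct and follows essentially the same route as the paper: the same circular-ordering angle count with the bounds $\Phi_p(p,1+p)=31^{\circ}.25\ldots$, $\Phi_q(1.21,1+q)\approx 20^{\circ}.17$, $\Phi_p(1.21,1+p)=25^{\circ}.03\ldots$, the same list of triples $[n_{11},n_{hh},n_{1h}]$, and the same minimizing case $[2,0,12]$ giving $362^{\circ}.86>360^{\circ}$. Your extra justifications (the block-count enumeration, the monotonicity in $k$, and the explicit appeal to Lemma \ref{two} since $1.21<1+q$) only spell out steps the paper leaves implicit.
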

\begin{proof}

As in the proof of \ref{f716a} we estimate a sum of the form
\begin{equation*}
\mathcal{A}=n_{11}\cdot \Phi_p\left(p,1+p\right)+n_{hh}\cdot \Phi_q\left(1.21,1+q\right)+n_{1h}\cdot \Phi_p\left(1.21,1+p\right),
\end{equation*}
where $\Phi_p\left(p,1+p\right)=31^{\circ}.25\ldots, \Phi_q\left(1.21,1+q\right)= 20^{\circ}.17\ldots, \Phi_p\left(1.21,1+p\right)=25^{\circ}.03\ldots $, and $[n_{11},n_{hh},n_{1h}]$ equals one of the following: $[2, 0, 12], [3, 1, 10], [4, 2, 8], [5,3,6], [6,4,4]$, or $[7,5,2]$.

The minimum of $\mathcal{A}$ is attained when $[n_{11},n_{hh},n_{1h}]=[2, 0, 12]$; that is, the points of weight $1/2$ are separated by the points of weight $1$. In this case we have
$\mathcal{A}> 2\cdot 31^{\circ}.25+12\cdot 25^{\circ}.03=362^{\circ}.86$, a contradiction.
\end{proof}

\begin{fact}\label{f814b}
It is impossible to have $1$ point of weight $1$ in the annulus $p \le \rho \le 1.88$  and $9$ points of weight $1/2$ in $1\le \rho\le 1.21$.
\end{fact}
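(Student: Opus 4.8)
The plan is to reuse the circular-ordering / angle-summing technique that pervades this section, in its simplest incarnation since only one point of weight $1$ is involved. Assume for contradiction that the configuration exists: a single point $X$ of weight $1$ with $p\le\rho_X\le 1.88$, together with $9$ points of weight $1/2$ all lying in $1\le\rho\le 1.21$. Sort all $10$ points in circular order about the origin $O$; the $10$ angles subtended at $O$ by consecutive pairs sum to exactly $360^{\circ}$, so it suffices to produce a lower bound on that sum which exceeds $360^{\circ}$.

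Because there is only one point of weight $1$, the combinatorics of the circular ordering are forced: exactly two of the ten angles have $X$ as an endpoint (each with a point of weight $1/2$ at its other end), and the remaining eight angles are each subtended by two consecutive points of weight $1/2$. For the two ``mixed'' angles I would apply Remark \ref{one}: one endpoint is of weight $1$, both endpoints lie in the annulus $1\le\rho\le 1.88$ (since $\min(p,1)=1$ and $\max(1.88,1.21)=1.88$), and they are at mutual distance at least $p$, so each such angle is at least $\Phi_p(1,1.88)$. For each of the eight remaining angles I would apply Lemma \ref{two} with both endpoints in $1\le\rho\le 1.21$: here $\min(a,c)=1\le 1+q$ and $\max(b,d)=1.21<1+q$, so the clipped bound is simply $\Phi_q(1,1.21)$. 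Altogether the angle sum is at least $8\,\Phi_q(1,1.21)+2\,\Phi_p(1,1.88)$.

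It then remains only to evaluate these two quantities using $p=1.409$, $q=1/p$ from (\ref{pq}). One checks the hypothesis $0\le R-d\le r\le R$ of Lemma \ref{BEsoss} in both cases (it holds trivially), and that in both cases the $2\arcsin(d/2R)$ branch of (\ref{phi}) is the smaller one, giving $\Phi_q(1,1.21)=34^{\circ}.10\ldots$ and $\Phi_p(1,1.88)=44^{\circ}.01\ldots$. Hence $8\,\Phi_q(1,1.21)+2\,\Phi_p(1,1.88)>8\cdot 34^{\circ}.10+2\cdot 44^{\circ}.01>360^{\circ}.8>360^{\circ}$, the desired contradiction.

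There is no real obstacle: no case analysis is needed, and the two angle bounds are immediate once the hypotheses are checked. The one delicate feature is that the margin is razor-thin --- under one degree --- which is precisely why the outer radius for $X$ is pinned at $1.88$; replacing $1.88$ by a noticeably larger value would push $\Phi_p(1,\cdot)$ below the threshold and the inequality would fail, so this fact is tailored to be exactly strong enough for the use it will be put to in the $8$-and-$14$ case.
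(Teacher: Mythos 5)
Your proof is correct and is essentially identical to the paper's: both bound the eight half-half angles below by $\Phi_q(1,1.21)=34^{\circ}.10\ldots$ and the two mixed angles by $\Phi_p(1,1.88)=44^{\circ}.01\ldots$, giving an angle sum exceeding $360^{\circ}$. Your extra verification of the hypotheses of Lemma \ref{BEsoss} and of which branch of $\Phi$ is active is fine but does not change the argument.
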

\begin{proof}
Note that this is the first time we are interested in the range of some point of weight $1$. Suppose it is possible to have the points placed as stated. Then the angle between two consecutive points of weight $1/2$ is at least $\Phi_q(1,1.21)=34^\circ.10\ldots$, while the angle
determined by the point of weight $1$ and a point of weight $1/2$ is at least $\Phi_p(1,1.88)=44^\circ.01\ldots$. It follows that the sum of the $10$ angles is at least $8\cdot 34^\circ.10+2\cdot44^\circ.01=360^{\circ}.82$, a contradiction.
\end{proof}

 \begin{fact}\label{f814c}
It is impossible to have $8$ points of weight $1$ in the annulus $1.88 \le \rho \le 1+p$  and $4$ points of weight $1/2$ in $1.29\le \rho\le 1+p$.
\end{fact}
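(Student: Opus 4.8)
The plan is to repeat the circular-ordering argument of Facts~\ref{f618akelvin} and~\ref{f716a}. Assume for contradiction that the $8+4=12$ points can be placed as stated, sort them around the origin $O$ by amplitude, and consider the $12$ angles subtended at $O$ by consecutive pairs. Split these into three groups: let $n_{11}$ be the number of angles between two consecutive points of weight $1$, $n_{hh}$ the number between two consecutive points of weight $1/2$, and $n_{1h}$ the number between two consecutive points of different weight, so that $n_{11}+n_{hh}+n_{1h}=12$.

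Next I would lower-bound each of the three types of angle using (\ref{pq}), (\ref{phi}), Remark~\ref{one} and Lemma~\ref{two}. A weight-$1$/weight-$1$ angle has both endpoints in $1.88\le\rho\le 1+p$ at mutual distance at least $p$, so it is at least $\Phi_p(1.88,1+p)$. A mixed angle has one endpoint of weight $1$ and both endpoints in $1.29\le\rho\le 1+p$ at mutual distance at least $p$, so by Remark~\ref{one} it is at least $\Phi_p(1.29,1+p)$. For a weight-$1/2$/weight-$1/2$ angle both endpoints lie in $1.29\le\rho\le 1+p$ at mutual distance at least $q$, and since $1.29\le 1+q$, Lemma~\ref{two} applies and bounds the angle below by $\Phi_q(1.29,1+q)$; the point here is that Lemma~\ref{two} lets us use the \emph{smaller} outer radius $1+q$ rather than $1+p$, which is what makes the estimate succeed. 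Evaluating, $\Phi_p(1.88,1+p)\approx 34^{\circ}.0$, $\Phi_q(1.29,1+q)\approx 22^{\circ}.2$, and $\Phi_p(1.29,1+p)\approx 28^{\circ}.1$.

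It remains to list the feasible triples $[n_{11},n_{hh},n_{1h}]$. If the four points of weight $1/2$ occupy $k$ maximal runs in the circular ordering, with $1\le k\le 4$, then the ordering has $2k$ weight-change boundaries, $4-k$ weight-$1/2$ adjacencies and $8-k$ weight-$1$ adjacencies, so $[n_{11},n_{hh},n_{1h}]=[8-k,\,4-k,\,2k]$: one of $[7,3,2]$, $[6,2,4]$, $[5,1,6]$, $[4,0,8]$. The total angle around $O$ is then $\mathcal{A}=n_{11}\,\Phi_p(1.88,1+p)+n_{hh}\,\Phi_q(1.29,1+q)+n_{1h}\,\Phi_p(1.29,1+p)$, and with $p=1.409$ one has $2\,\Phi_p(1.29,1+p)\approx\Phi_p(1.88,1+p)+\Phi_q(1.29,1+q)$, so $\mathcal{A}$ is essentially the same in all four cases; in each $\mathcal{A}\approx 360^{\circ}.9>360^{\circ}$, the desired contradiction.

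The step I expect to be the real obstacle is purely quantitative: the surplus over $360^{\circ}$ is well under one degree in every case, so the three numbers $\Phi_p(1.88,1+p)$, $\Phi_q(1.29,1+q)$, $\Phi_p(1.29,1+p)$ must be computed from (\ref{phi}) to enough precision, and one must check that every feasible $[n_{11},n_{hh},n_{1h}]$ really yields a sum above $360^{\circ}$ (here all four are essentially tied). The qualitative key is the use of Lemma~\ref{two}: replacing the naive outer radius $1+p=2.409$ by $1+q=1.7097\ldots$ for the weight-$1/2$ pairs raises $\Phi_q$ by just the amount needed; without it the angle sum would fall short of a full turn.
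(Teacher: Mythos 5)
Your proposal is correct and takes essentially the same route as the paper: the same circular ordering, the same three lower bounds $\Phi_p(1.88,1+p)\approx 34^{\circ}.0$, $\Phi_q(1.29,1+q)\approx 22^{\circ}.2$, $\Phi_p(1.29,1+p)\approx 28^{\circ}.1$ (the last via Lemma \ref{two} with outer radius $1+q$), and the same four feasible triples $[4,0,8],[5,1,6],[6,2,4],[7,3,2]$, each giving a sum of about $360^{\circ}.9>360^{\circ}$. (The paper writes the weight-$1$ pair bound as $\Phi_p(p,1.88)$, but its stated value $34^{\circ}.008$ is that of $\Phi_p(1.88,1+p)$, so your formulation is the intended one.)
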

\begin{proof}
With the same conventions as in the previous proofs, the proof reduces to estimating
\begin{equation*}
\mathcal{A}=n_{11}\cdot \Phi_p\left(p,1.88\right)+n_{hh}\cdot \Phi_q\left(1.29,1+q\right)+n_{1h}\cdot \Phi_p\left(1.29,1+p\right),
\end{equation*}
where $\Phi_p\left(p,1.88\right)=34^{\circ}.008\ldots, \Phi_q\left(1.29,1+q\right)= 22^{\circ}.21\ldots, \Phi_p\left(1.29,1+p\right)=28^{\circ}.11\ldots $, and $[n_{11},n_{hh},n_{1h}]$ equals one of the following: $[4, 0, 8], [5,1,6], [6,2,4]$, or $[7,3,2]$.

The minimum of $\mathcal{A}$ is attained when $[n_{11},n_{hh},n_{1h}]=[4, 0, 8]$; that is, the points of weight $1$ are consecutive, and so are the points of weight $1/2$. In this case we have,
$\mathcal{A}> 4\cdot 34^{\circ}+8\cdot 28^{\circ}.11=360^{\circ}.88$, a contradiction.
\end{proof}

\begin{fact}\label{f814d}
It is impossible to have $11$ points of weight $1/2$ in $1\le \rho\le 1.29$ such that $9$ of them lie in $1\le \rho\le 1.21$.
\end{fact}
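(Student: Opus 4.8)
The plan is to adapt, almost word for word, the coloring argument from the proof of Fact~\ref{f716b}, with the radii $1.33$ and $1.2$ there replaced by $1.29$ and $1.21$. I would argue by contradiction: suppose such a configuration of $11$ points of weight $1/2$ exists. Color \emph{red} the $9$ points that are guaranteed to lie in $1\le\rho\le 1.21$, and color \emph{blue} the remaining two points (which in any case lie in $1\le\rho\le 1.29$). Then sort all $11$ points in circular order and bound below the $11$ angles at the origin subtended by pairs of consecutive points.

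The two bounds I need both come from Lemma~\ref{two}, using $q=0.7097\ldots$ and the fact that $1+q=1.7097\ldots$ exceeds both $1.21$ and $1.29$, so the truncation $\min(1+q,\cdot)$ in that lemma is inactive. An angle between two consecutive red points is at least $\Phi_q(1,1.21)=34^\circ.10\ldots$, the value already recorded in Fact~\ref{f814b}. An angle incident to at least one blue point has both endpoints in $1\le\rho\le 1.29$, hence is at least $\Phi_q(1,1.29)$, which I would compute directly from~(\ref{phi}) to be $31^\circ.93\ldots$ (the $2\arcsin$ branch is the active one).

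Then I would split into two combinatorial cases. If the two blue points are not consecutive in the circular order, exactly $4$ of the $11$ angles are incident to a blue point and the other $7$ lie between consecutive red points, so the total is at least $4\,\Phi_q(1,1.29)+7\,\Phi_q(1,1.21)=366^\circ.4\ldots$. If the two blue points are consecutive, then $3$ angles are incident to a blue point and $8$ are red--red, so the total is at least $3\,\Phi_q(1,1.29)+8\,\Phi_q(1,1.21)=368^\circ.6\ldots$; since $\Phi_q(1,1.29)<\Phi_q(1,1.21)$ this is the larger of the two sums, so the non-consecutive case is the binding one. In either case the angles around the origin add up to strictly more than $360^\circ$, the desired contradiction.

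There is no real obstacle here: the argument is a routine variant of Fact~\ref{f716b}, and the only thing to verify is the numerical margin in the binding (non-consecutive) case, which comes out to roughly $6^\circ$ above $360^\circ$ — comfortably positive, so the estimate does not need to be sharpened. (This fact is exactly what will be needed to close the $[8,14]$ case: Facts~\ref{f814a} and~\ref{f814b} force at least $9$ points of weight $1/2$ into $1\le\rho\le1.21$ and all points of weight $1$ into $1.88\le\rho\le1+p$, and then Fact~\ref{f814c} forces at least $11$ points of weight $1/2$ into $1\le\rho\le1.29$, contradicting the present fact.)
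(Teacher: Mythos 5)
Your proposal is correct and coincides with the paper's own proof: the same red/blue coloring of the $9$ points in $1\le\rho\le 1.21$ versus the other two, the same two cases according to whether the blue points are consecutive, and the same bounds $4\Phi_q(1,1.29)+7\Phi_q(1,1.21)>360^{\circ}$ and $3\Phi_q(1,1.29)+8\Phi_q(1,1.21)>360^{\circ}$ (the paper gets $366^{\circ}.49\ldots$ and $368^{\circ}.66\ldots$, matching your figures). Nothing further is needed.
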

\begin{proof}
The proof is almost identical to that of fact \ref{f716b}. Color $9$ of the points in $1\le \rho\le 1.21$ red, and the remaining two points blue. If the two blue
points are not consecutive, then the sum of the $11$ angles is at least
$4\Phi_q\left(1,1.29\right)+7\Phi_q\left(1,1.21\right)=366^{\circ}.49\ldots$. Otherwise, the sum of the angles is at least $3\Phi_q\left(1,1.29\right)+8\Phi_q\left(1,1.21\right)=368^{\circ}.66\ldots$. In either case, we obtain a contradiction.
\end{proof}
\begin{lemma}\label{814}
It is impossible to have $8$ points of weight $1$ in $p\le \rho\le 1+p$ and $14$ points of weight $1/2$ in $1\le \rho \le 1+p$.
\end{lemma}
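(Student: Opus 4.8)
The plan is to argue by contradiction, chaining together the four preliminary Facts \ref{f814a}--\ref{f814d} exactly as was done for Lemmas \ref{618kelvin} and \ref{716}, so as to trap the points of weight $1/2$ into an impossibly small disc. Assume such a configuration exists; then all $8$ points of weight $1$ lie in $p\le\rho\le 1+p$, all $14$ points of weight $1/2$ lie in $1\le\rho\le 1+p$, and the separation conditions of Theorem \ref{weight} hold.

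First I would invoke Fact \ref{f814a}: since there are $8$ points of weight $1$ in $p\le\rho\le 1+p$, it is impossible for $6$ of the weight-$1/2$ points to lie in $1.21\le\rho\le 1+p$, hence at most $5$ do, and so at least $14-5=9$ points of weight $1/2$ lie in the inner annulus $1\le\rho\le 1.21$. With those $9$ points in hand, Fact \ref{f814b} forbids even a single point of weight $1$ from lying in $p\le\rho\le 1.88$, so all $8$ points of weight $1$ are forced into the thin annulus $1.88\le\rho\le 1+p$. Now Fact \ref{f814c} applies: with all $8$ weight-$1$ points in $1.88\le\rho\le 1+p$, it is impossible to have $4$ points of weight $1/2$ in $1.29\le\rho\le 1+p$, so at most $3$ do, which forces at least $14-3=11$ points of weight $1/2$ into $1\le\rho\le 1.29$.

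Finally I would combine the two conclusions: at least $11$ points of weight $1/2$ sit in $1\le\rho\le 1.29$, and at least $9$ of these (namely those already located in $1\le\rho\le 1.21$) sit in the smaller disc. Designating exactly nine of them as lying in $1\le\rho\le 1.21$ and the remaining (at least) two as lying in $1\le\rho\le 1.29$ gives precisely the configuration ruled out by Fact \ref{f814d}, which is the desired contradiction.

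I do not expect a real obstacle — the four facts were built to interlock — but the step deserving the most care is the passage through Fact \ref{f814b}: one must observe that if \emph{any} weight-$1$ point lay in $p\le\rho\le 1.88$, it would, together with the $9$ weight-$1/2$ points already pinned into $1\le\rho\le 1.21$, reproduce exactly the forbidden configuration of Fact \ref{f814b}, hence \emph{every} weight-$1$ point must avoid that range. A minor secondary point is that in the last step only ``at least $9$'' in the inner disc is needed, since any extra inner points merely replace the bound $\Phi_q(1,1.29)$ by the larger $\Phi_q(1,1.21)$ and so only strengthen the contradiction of Fact \ref{f814d}.
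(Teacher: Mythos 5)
Your proposal is correct and follows essentially the same route as the paper: chaining Facts \ref{f814a}, \ref{f814b}, \ref{f814c}, and \ref{f814d} in that order to force at least $9$ weight-$1/2$ points into $1\le\rho\le 1.21$, push all weight-$1$ points into $1.88\le\rho\le 1+p$, force at least $11$ weight-$1/2$ points into $1\le\rho\le 1.29$, and then contradict Fact \ref{f814d}. The extra care you take with the counting (``at most $5$'', ``at most $3$'') and with the monotonicity remark at the end only makes explicit what the paper leaves implicit.
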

\begin{proof}
Assume it is possible. By fact \ref{f814a} at least $9$ points of weight $1/2$ are in $1\le\rho\le 1.21$. Fact \ref{f814b} implies that all points of weight $1$ are in $1.88\le \rho \le 1+p$. Fact \ref{f814c} then tells us that at least $11$ points of weight $1/2$ lie in $1\le \rho \le 1.29$. However, fact \ref{f814d} shows that it is impossible to have $11$ points of weight $1/2$ in $1\le \rho\le 1.29$ such that $9$ of them lie in $1\le \rho\le 1.21$.
\end{proof}

\section{\bf 9 points of weight 1 and 12 points of weight 1/2}

\begin{fact}\label{f912a}
It is impossible to have $9$ points of weight $1$ in $p\le \rho \le 1+p$ and $4$ points of weight $1/2$ in $1.22\le \rho\le 1+p$.
\end{fact}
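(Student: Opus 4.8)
The plan is to run the same ``angles around the origin'' argument used in Facts \ref{f716a}, \ref{f814a} and \ref{f814c}. Suppose, for contradiction, that such a configuration of $9+4=13$ points exists. Since no two of the points share an amplitude, we may list them in circular order around $O$ and consider the $13$ angles subtended at $O$ by consecutive pairs; these sum to exactly $360^{\circ}$, so it suffices to exhibit lower bounds for them whose sum already exceeds $360^{\circ}$.

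First I would record the three relevant angle bounds, each a routine evaluation of (\ref{phi}). Two consecutive points of weight $1$ both lie in $p\le\rho\le 1+p$ and are at distance at least $p$, so by Remark \ref{one} the angle between them is at least $\Phi_p(p,1+p)=31^{\circ}.25\ldots$. Two consecutive points of weight $1/2$ lie in $1.22\le\rho\le 1+p$ and are at distance at least $q$; since $1.22\le 1+q$, Lemma \ref{two} gives the lower bound $\Phi_q(1.22,\min(1+q,1+p))=\Phi_q(1.22,1+q)=20^{\circ}.47\ldots$. Two consecutive points of different weight both lie in $1.22\le\rho\le 1+p$ (as $p>1.22$) and are at distance at least $p$, so by Remark \ref{one} the angle is at least $\Phi_p(1.22,1+p)=25^{\circ}.48\ldots$.

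Next I would enumerate the admissible configurations. Let $n_{11},n_{hh},n_{1h}$ be the numbers of consecutive pairs of the three types, and let $k$ denote the common number of maximal runs of weight $1$ and of weight $1/2$ in the circular order, so $1\le k\le 4$. Counting the two angles incident to each point gives $2n_{11}+n_{1h}=18$ and $2n_{hh}+n_{1h}=8$, while the runs alternate, so $n_{1h}=2k$; hence $[n_{11},n_{hh},n_{1h}]=[9-k,4-k,2k]$, and the only cases are $[8,3,2],[7,2,4],[6,1,6],[5,0,8]$. The resulting lower bound $\mathcal{A}=n_{11}\cdot 31^{\circ}.25+n_{hh}\cdot 20^{\circ}.47+n_{1h}\cdot 25^{\circ}.48$ is an affine decreasing function of $k$, so its minimum over the four cases is attained at $k=4$, i.e.\ $[5,0,8]$ (the four points of weight $1/2$ mutually non-adjacent, each flanked by points of weight $1$), where $\mathcal{A}=5\cdot 31^{\circ}.25+8\cdot 25^{\circ}.48>360^{\circ}$ --- the desired contradiction.

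I expect the only genuinely delicate point to be the thinness of the margin in the binding case $[5,0,8]$: the estimate clears $360^{\circ}$ by only about $0^{\circ}.1$, so $\Phi_p(1.22,1+p)$ must be computed with enough precision to make the inequality strict. This is presumably why the inner radius is taken to be exactly $1.22$ --- chosen as small as the companion ``points forced inward'' fact will permit, while still keeping $\mathcal{A}>360^{\circ}$ in the worst configuration. Everything else is a mechanical check.
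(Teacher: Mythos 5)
Your proposal is correct and follows essentially the same route as the paper: the identical angle bounds $\Phi_p(p,1+p)$, $\Phi_q(1.22,1+q)$, $\Phi_p(1.22,1+p)$, the same enumeration $[n_{11},n_{hh},n_{1h}]\in\{[5,0,8],[6,1,6],[7,2,4],[8,3,2]\}$, and the same binding case $[5,0,8]$ yielding a sum just over $360^{\circ}$ (your remark about the thin margin is accurate; the paper's stated total $362^{\circ}.01$ is an arithmetic slip for roughly $360^{\circ}.01$, but the contradiction stands). The only differences are harmless rounding in the last decimal of the angle values and your explicit run-counting justification of the case list, which the paper leaves implicit.
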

\begin{proof}

As in the proof of \ref{f814a} we estimate a sum of the form
\begin{equation*}
\mathcal{A}=n_{11}\cdot \Phi_p\left(p,1+p\right)+n_{hh}\cdot \Phi_q\left(1.22,1+q\right)+n_{1h}\cdot \Phi_p\left(1.22,1+p\right),
\end{equation*}
where $\Phi_p\left(p,1+p\right)=31^{\circ}.25\ldots, \Phi_q\left(1.22,1+q\right)= 20^{\circ}.48\ldots, \Phi_p\left(1.22,1+p\right)=25^{\circ}.47\ldots $, and $[n_{11},n_{hh},n_{1h}]$ equals one of the following: $[5, 0, 8], [6, 1, 6], [7, 2, 4],$ or $[8,3,2]$.

The minimum of $\mathcal{A}$ is attained when $[n_{11},n_{hh},n_{1h}]=[5, 0, 8]$; that is, the points of weight $1/2$ are separated by the points of weight $1$. In this case we have,
$\mathcal{A}> 5\cdot 31^{\circ}.25+8\cdot 25^{\circ}.47=362^{\circ}.01$, a contradiction.
\end{proof}

\begin{fact}\label{f912b}
It is impossible to have $1$ point of weight $1$ in the annulus $p \le \rho \le 1.85$ and $9$ points of weight $1/2$ in $1\le \rho\le 1.22$.
\end{fact}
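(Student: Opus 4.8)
The plan is to follow, almost verbatim, the argument used for Fact~\ref{f814b}, since the configuration here has the same shape: a single point of weight~$1$ together with nine points of weight~$1/2$, ten points in all.

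First I would assume for contradiction that such a placement exists, list the ten points in circular order (by amplitude, which is well defined since no two points share an amplitude), and look at the ten angles they subtend at the origin $O$; these sum to exactly $360^\circ$. Since there is only one point of weight~$1$, it is flanked in the circular ordering by two points of weight~$1/2$. Hence exactly two of the ten angles are of ``mixed'' type (one weight-$1$ and one weight-$1/2$ endpoint), and the remaining eight angles each have two weight-$1/2$ endpoints.

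Next I would bound the two kinds of angles using Lemma~\ref{BEsoss}, Remark~\ref{one} and Lemma~\ref{two}. For a mixed angle, the weight-$1$ endpoint lies in $p\le\rho\le 1.85$ and the weight-$1/2$ endpoint in $1\le\rho\le 1.22$; both then lie in the annulus $1\le\rho\le 1.85$, they are at distance at least $p$ (points of distinct weights are at least $p$ apart), and one of them has weight~$1$, so by Remark~\ref{one} the angle is at least $\Phi_p(1,1.85)$. For an angle between two points of weight~$1/2$, both lie in $1\le\rho\le 1.22$, they are at distance at least $q$, and the smaller of the two inner radii is $1\le 1+q$, so Lemma~\ref{two} applies and gives an angle of at least $\Phi_q\bigl(1,\min(1+q,1.22)\bigr)=\Phi_q(1,1.22)$. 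With $p=1.409$ and $q=1/p$ one checks $\Phi_p(1,1.85)\approx 44^\circ.77$ and $\Phi_q(1,1.22)\approx 33^\circ.82$, whence the ten angles total at least $8\cdot 33^\circ.82+2\cdot 44^\circ.77>360^\circ$, the desired contradiction.

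I do not anticipate a conceptual obstacle; the one thing to watch is that the numerical margin is thin — the estimate clears $360^\circ$ by only about a tenth of a degree — so the radii $1.85$ and $1.22$ and the value $p=1.409$ must be used exactly as specified. It is also essential (as the paper emphasizes) to invoke Lemma~\ref{two} rather than Remark~\ref{one} for the eight weight-$1/2$ angles, and to note in passing that the constraint $0\le R-d\le r\le R$ of Lemma~\ref{BEsoss} holds for both $\Phi_p(1,1.85)$ and $\Phi_q(1,1.22)$, which is immediate.
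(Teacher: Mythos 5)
Your proposal is correct and follows exactly the paper's argument for Fact \ref{f912b} (which itself mirrors Fact \ref{f814b}): two mixed angles each at least $\Phi_p(1,1.85)\approx 44^\circ.76$ and eight weight-$1/2$ angles each at least $\Phi_q(1,1.22)\approx 33^\circ.82$, giving a total exceeding $360^\circ$. The numerical values and the thin margin you note agree with the paper's computation of $360^\circ.08$.
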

\begin{proof}
Same proof as for fact \ref{f814b}. The angle between two consecutive points of weight $1/2$ is at least $\Phi_q(1,1.22)=33^\circ.82\ldots$, while the angle determined by the point of weight $1$ and a point of weight $1/2$ is at least $\Phi_p(1,1.85)=44^\circ.76\ldots$. It follows that the sum of the $10$ angles is at least $8\cdot 33^\circ.82+2\cdot44^\circ.76=360^{\circ}.08$, a contradiction.
\end{proof}

\begin{fact}\label{f912c}
It is impossible to have $9$ points of weight $1$ in the annulus $1.85 \le \rho \le 1+p$ and $2$ points of weight $1/2$ in $1.45\le \rho\le 1+p$.
\end{fact}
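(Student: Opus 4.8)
The plan is to repeat the scheme used for Facts \ref{f814c} and \ref{f912a}. Assume for contradiction that the $9+2=11$ points can be placed as stated, sort them around the origin $O$ in circular order, and examine the $11$ angles subtended by consecutive pairs. As before, let $n_{11}$, $n_{hh}$, and $n_{1h}$ be the numbers of these angles spanned, respectively, by two consecutive points of weight $1$, by two consecutive points of weight $1/2$, and by two consecutive points of different weight, so that $n_{11}+n_{hh}+n_{1h}=11$.

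First I would pin down the three angle bounds. Two consecutive points of weight $1$ both lie in $1.85\le\rho\le 1+p$, so by Remark \ref{one} they subtend an angle of at least $\Phi_p(1.85,1+p)$. A point of weight $1$ and a point of weight $1/2$ both lie in $1.45\le\rho\le 1+p$ and one of them has weight $1$, so by Remark \ref{one} they subtend at least $\Phi_p(1.45,1+p)$. Finally, the two points of weight $1/2$ lie in $1.45\le\rho\le 1+p$ with $1.45\le 1+q$, so Lemma \ref{two} applies; since $1+p>1+q$, it tells us that when these two points are consecutive they subtend at least $\Phi_q(1.45,1+q)$. Numerically one gets $\Phi_p(1.85,1+p)=34^{\circ}.0\ldots$ (here the $2\arcsin$ branch of (\ref{phi}) is the smaller one), $\Phi_p(1.45,1+p)=32^{\circ}.0\ldots$ (the $\arccos$ branch), and $\Phi_q(1.45,1+q)=23^{\circ}.9\ldots$.

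With only two points of weight $1/2$ there are exactly two admissible patterns: either the two weight-$1/2$ points are adjacent in the circular ordering, forcing the nine weight-$1$ points into a single block and giving $[n_{11},n_{hh},n_{1h}]=[8,1,2]$; or they are not adjacent, splitting the weight-$1$ points into two nonempty arcs and giving $[n_{11},n_{hh},n_{1h}]=[7,0,4]$. I would then bound
\[
\mathcal{A}=n_{11}\,\Phi_p(1.85,1+p)+n_{hh}\,\Phi_q(1.45,1+q)+n_{1h}\,\Phi_p(1.45,1+p)
\]
from below in each case. The pattern $[7,0,4]$ gives $\mathcal{A}>366^{\circ}$ with room to spare, while the tighter pattern $[8,1,2]$ gives $\mathcal{A}=8\cdot 34^{\circ}.0+23^{\circ}.9+2\cdot 32^{\circ}.0>360^{\circ}$. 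Since $\mathcal{A}>360^{\circ}$ in both cases, the $11$ points cannot surround $O$, which is the desired contradiction.

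The only delicate point is arithmetic: in the binding pattern $[8,1,2]$ the total beats $360^{\circ}$ by less than two tenths of a degree, so the three values of $\Phi$ must be computed to enough decimal places and the active branch of the minimum in (\ref{phi}) identified correctly, as indicated above. Everything else is exactly the bookkeeping already carried out in the preceding facts of this section, so the verification is routine.
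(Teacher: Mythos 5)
Your proof is correct and follows essentially the same route as the paper: the same circular ordering, the same two admissible patterns $[n_{11},n_{hh},n_{1h}]\in\{[7,0,4],[8,1,2]\}$, and the same three bounds $\Phi_p(1.85,1+p)=34^{\circ}.008\ldots$, $\Phi_q(1.45,1+q)=23^{\circ}.95\ldots$, $\Phi_p(1.45,1+p)=32^{\circ}.06\ldots$ (the paper's ``$\Phi_p(p,1.85)$'' is evidently this same quantity, as its stated value shows). One caution: with your truncated figures the binding case reads $8\cdot 34^{\circ}.0+23^{\circ}.9+2\cdot 32^{\circ}.0=359^{\circ}.9$, which does not exceed $360^{\circ}$, so---as you yourself anticipate---the extra decimals are genuinely needed, giving $8\cdot 34^{\circ}.008+23^{\circ}.95+2\cdot 32^{\circ}.06>360^{\circ}.1$, in line with the paper's $360^{\circ}.07$.
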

\begin{proof}
With the same conventions as in the previous proofs, the proof reduces to estimating
\begin{equation*}
\mathcal{A}=n_{11}\cdot \Phi_p\left(p,1.85\right)+n_{hh}\cdot \Phi_q\left(1.45,1+q\right)+n_{1h}\cdot \Phi_p\left(1.45,1+p\right),
\end{equation*}
where $\Phi_p\left(p,1.85\right)=34^{\circ}.008\ldots, \Phi_q\left(1.45,1+q\right)= 23^{\circ}.95\ldots, \Phi_p\left(1.45,1+p\right)=32^{\circ}.06\ldots $, and $[n_{11},n_{hh},n_{1h}]$ equals either  $[7, 0, 4]$, or $[8,1,2]$.

The minimum of $\mathcal{A}$ is attained when $[n_{11},n_{hh},n_{1h}]=[8, 1, 2]$; that is, the points of weight $1/2$ are consecutive.
In this case we have
$\mathcal{A}> 8\cdot 34^{\circ}+23^{\circ}.95+ 2\cdot 32^{\circ}.06=360^{\circ}.07$, a contradiction.
\end{proof}

\begin{fact}\label{f912d}
It is impossible to have $9$ points of weight $1$ in the annulus $1.85 \le \rho \le 1+p$ and $3$ points of weight $1/2$ in $1.24\le \rho\le 1+p$.
\end{fact}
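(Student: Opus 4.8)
The plan is to reuse, essentially word for word, the circular-ordering template applied in Facts \ref{f912a} and \ref{f912c}. Assume for contradiction that such a configuration exists: $9$ points of weight $1$ in the annulus $1.85\le\rho\le1+p$ and $3$ points of weight $1/2$ in $1.24\le\rho\le1+p$, twelve points in all. Sort them by amplitude and consider the twelve angles subtended at the origin by consecutive pairs. Let $n_{11},n_{hh},n_{1h}$ count the angles whose endpoints have weights $(1,1)$, $(1/2,1/2)$, $(1,1/2)$ respectively, so $n_{11}+n_{hh}+n_{1h}=12$. By Remark \ref{one} every angle counted by $n_{11}$ is at least $\Phi_p(1.85,1+p)$, and every one counted by $n_{1h}$ is at least $\Phi_p(\min(1.24,1.85),1+p)=\Phi_p(1.24,1+p)$. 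For the $n_{hh}$ angles I would invoke Lemma \ref{two}: the weight-$1/2$ points lie in an annulus with inner radius $1.24\le 1+q$, so the lemma yields a lower bound of $\Phi_q(1.24,\min(1+q,1+p))=\Phi_q(1.24,1+q)$. Numerically $\Phi_p(1.85,1+p)=34^{\circ}.008\ldots$ — this is the $2\arcsin\frac{p}{2(1+p)}$ branch of (\ref{phi}), so it does not even depend on the inner radius — while $\Phi_q(1.24,1+q)=21^{\circ}.05\ldots$ and $\Phi_p(1.24,1+p)=26^{\circ}.30\ldots$, both on the $\arccos$ branch.

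Next I would enumerate the admissible triples $[n_{11},n_{hh},n_{1h}]$. Since there are only three points of weight $1/2$ and many weight-$1$ points around, the triple is determined by the number $k\in\{1,2,3\}$ of maximal runs of consecutive weight-$1/2$ points in the circular order: a run of length $s$ contributes $s-1$ angles of type $hh$ and is flanked by two angles of type $1h$, so $n_{hh}=3-k$, $n_{1h}=2k$, $n_{11}=9-k$. Hence $[n_{11},n_{hh},n_{1h}]$ is one of $[8,2,2]$, $[7,1,4]$, $[6,0,6]$. Passing from $k$ to $k+1$ trades one $1h$-angle for an $11$-angle together with an $hh$-angle, changing the guaranteed total $\mathcal{A}=n_{11}\Phi_p(1.85,1+p)+n_{hh}\Phi_q(1.24,1+q)+n_{1h}\Phi_p(1.24,1+p)$ by $\Phi_p(1.85,1+p)+\Phi_q(1.24,1+q)-2\Phi_p(1.24,1+p)\approx 34^{\circ}+21^{\circ}-52^{\circ}.6>0$; so $\mathcal{A}$ is smallest at $k=3$, i.e.\ when the three points of weight $1/2$ are pairwise separated by points of weight $1$.

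It then remains to check that single extremal case: $\mathcal{A}\ge 6\,\Phi_p(1.85,1+p)+6\,\Phi_p(1.24,1+p)\approx 6\cdot 34^{\circ}.008+6\cdot 26^{\circ}.30\approx 361^{\circ}.8>360^{\circ}$, contradicting that the twelve angles sum to $360^{\circ}$.

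The difficulty is not conceptual — it is the same apparatus as the surrounding facts — but numerical. The margin in the extremal case is under $2^{\circ}$, so $\Phi_p(1.24,1+p)$ and $\Phi_q(1.24,1+q)$ must be computed to a couple of decimal places and not merely bounded crudely, and one must check which branch of the minimum in (\ref{phi}) is active in each (the $2\arcsin$ branch for the weight-$1$ pairs, the $\arccos$ branch for the mixed and the weight-$1/2$ pairs). One should also confirm that Lemma \ref{two} really applies — it does, since $1.24<1+q=1.709\ldots$ — and that $k\in\{1,2,3\}$ exhausts the run-patterns of the light points. Everything else is a routine repetition of the earlier argument.
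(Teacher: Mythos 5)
Your proposal is correct and follows essentially the same route as the paper: sort the twelve points circularly, bound the angle types by $\Phi_p(1.85,1+p)=34^{\circ}.008\ldots$, $\Phi_q(1.24,1+q)=21^{\circ}.05\ldots$, $\Phi_p(1.24,1+p)=26^{\circ}.30\ldots$, note that $[n_{11},n_{hh},n_{1h}]\in\{[6,0,6],[7,1,4],[8,2,2]\}$, and get the contradiction $6\cdot 34^{\circ}+6\cdot 26^{\circ}.3=361^{\circ}.8>360^{\circ}$ in the extremal case $[6,0,6]$. Your description of the $k\to k+1$ trade is stated backwards (one $11$-angle plus one $hh$-angle is exchanged for two $1h$-angles, so the signed change you wrote is for the reverse direction), but this is harmless since the identified minimizer $[6,0,6]$ is correct and the three triples can be checked directly.
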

\begin{proof}
Same approach as for the previous fact. The sum of the $12$ angles is bounded from below by a quantity of the form
\begin{equation*}
\mathcal{A}=n_{11}\cdot \Phi_p\left(p,1.85\right)+n_{hh}\cdot \Phi_q\left(1.24,1+q\right)+n_{1h}\cdot \Phi_p\left(1.24,1+p\right),
\end{equation*}
where $\Phi_p\left(p,1.85\right)=34^{\circ}.008\ldots, \Phi_q\left(1.24,1+q\right)= 21^{\circ}.05\ldots, \Phi_p\left(1.24,1+p\right)=26^{\circ}.30\ldots $, and $[n_{11},n_{hh},n_{1h}]$ takes one of the following values:  $[6, 0, 6], [7,1,4]$, or $[8,2,2]$.

The minimum of $\mathcal{A}$ is attained when $[n_{11},n_{hh},n_{1h}]=[6, 0, 6]$; that is, the points of weight $1/2$ are separated by points of weight $1$.
In this case we have
$\mathcal{A}> 6\cdot 34^{\circ}+ 6\cdot 26^{\circ}.3=361^{\circ}.8$, a contradiction.
\end{proof}

\begin{fact}\label{f912e}
It is impossible to have $9$ points of weight $1$ in the annulus $1.85 \le \rho \le 1+p$ and $4$ points of weight $1/2$ in $1.19\le \rho\le 1+p$.
\end{fact}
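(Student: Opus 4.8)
The plan is to mimic, essentially line for line, the proofs of Facts \ref{f912a}, \ref{f912c} and \ref{f912d}. Suppose for contradiction that the configuration exists. It consists of $9+4=13$ points, so sorting them by amplitude produces $13$ consecutive pairs whose subtended angles add up to exactly $360^{\circ}$. As before, classify these angles: let $n_{11}$ be the number of angles between two consecutive points of weight $1$, let $n_{hh}$ be the number between two consecutive points of weight $1/2$, and let $n_{1h}$ be the number between points of opposite weight, so that $n_{11}+n_{hh}+n_{1h}=13$.

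Next I would record the three uniform lower bounds. Two consecutive points of weight $1$ both lie in $1.85\le\rho\le 1+p$ and are at least $p$ apart, so by Remark \ref{one} the angle between them is at least $\Phi_p(1.85,1+p)=2\arcsin\frac{p}{2(1+p)}=34^{\circ}.008\ldots$ (on this range of inner radii the second term of (\ref{phi}) realizes the minimum, which is why the precise inner radius is immaterial). The $4$ points of weight $1/2$ all lie in $1.19\le\rho\le 1+p$ with $1.19\le 1+q$ and are at least $q$ apart, so Lemma \ref{two} applies and yields an angle of at least $\Phi_q(1.19,1+q)=19^{\circ}.51\ldots$ between two consecutive such points. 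Finally, a mixed consecutive pair subtends at least $\Phi_p(1.19,1+p)=24^{\circ}.09\ldots$, again by Remark \ref{one}. Hence the total angle $360^{\circ}$ around $O$ is at least
\begin{equation*}
\mathcal{A}:=n_{11}\cdot 34^{\circ}.008+n_{hh}\cdot 19^{\circ}.51+n_{1h}\cdot 24^{\circ}.09 .
\end{equation*}

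To conclude, I would enumerate the admissible triples. If the $4$ points of weight $1/2$ occupy $k$ maximal runs in the circular ordering, then $n_{1h}=2k$, $n_{hh}=4-k$ and $n_{11}=9-k$; since $9\ge 4$, every $k\in\{1,2,3,4\}$ can occur, giving $[n_{11},n_{hh},n_{1h}]\in\{[8,3,2],[7,2,4],[6,1,6],[5,0,8]\}$. Increasing $k$ by one changes $\mathcal{A}$ by $-34^{\circ}.008-19^{\circ}.51+2\cdot 24^{\circ}.09<0$, so $\mathcal{A}$ is minimized when $k=4$, that is for $[5,0,8]$; but even then $\mathcal{A}=5\cdot 34^{\circ}.008+8\cdot 24^{\circ}.09>362^{\circ}>360^{\circ}$, the desired contradiction.

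There is no genuine obstacle here: like every fact in this chain it reduces to a short numerical check. The only points deserving a moment's care are verifying that $\min(a,c)=1.19\le 1+q=1.709\ldots$, so that the radial-projection step of Lemma \ref{two} is legitimate; noting that with separation exactly $p$ the quantity $\Phi_p(\cdot,1+p)$ collapses to the constant $2\arcsin\frac{p}{2(1+p)}$ over the relevant inner radii; and checking that $[5,0,8]$ is indeed the angle-minimizing distribution of the points of weight $1/2$.
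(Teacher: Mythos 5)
Your proposal is correct and follows essentially the same route as the paper: sort the $13$ points circularly, bound the three types of consecutive angles by $\Phi_p(1.85,1+p)\approx 34^{\circ}.008$, $\Phi_q(1.19,1+q)\approx 19^{\circ}.5$, $\Phi_p(1.19,1+p)\approx 24^{\circ}.1$, enumerate $[n_{11},n_{hh},n_{1h}]\in\{[5,0,8],[6,1,6],[7,2,4],[8,3,2]\}$, and check that even the minimizing case $[5,0,8]$ gives a total exceeding $360^{\circ}$. Your explicit monotonicity argument for why $[5,0,8]$ is the minimizer (and your observation that the $2\arcsin$ term realizes the minimum, making the weight-$1$ bound the paper's stated $34^{\circ}.008$) are harmless additions; the tiny rounding discrepancies do not affect the contradiction.
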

\begin{proof}
Same approach as for the previous two results. The sum of the $13$ angles is bounded from below by a quantity of the form
\begin{equation*}
\mathcal{A}=n_{11}\cdot \Phi_p\left(p,1.85\right)+n_{hh}\cdot \Phi_q\left(1.19,1+q\right)+n_{1h}\cdot \Phi_p\left(1.19,1+p\right),
\end{equation*}
where $\Phi_p\left(p,1.85\right)=34^{\circ}.008\ldots, \Phi_q\left(1.19,1+q\right)= 19^{\circ}.50\ldots, \Phi_p\left(1.19,1+p\right)=24^{\circ}.08\ldots $, and $[n_{11},n_{hh},n_{1h}]$ takes one of the following values:  $[5, 0, 8], [6,1,6], [7,2,4]$, or $[8,3,2]$.

The minimum of $\mathcal{A}$ is attained when $[n_{11},n_{hh},n_{1h}]=[5, 0, 8]$; that is, the points of weight $1/2$ are separated by points of weight $1$. In this case we have
$\mathcal{A}> 5\cdot 34^{\circ}+ 8\cdot 24^{\circ}=362^{\circ}$, a contradiction.
\end{proof}

\begin{fact}\label{f912f}
It is impossible to have $11$ points of weight $1/2$ in $1\le \rho\le 1.45$ such that $9$ of them lie in $1\le \rho\le 1.19$ and $10$ of them lie in $1\le \rho\le 1.24$.
\end{fact}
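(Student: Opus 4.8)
The plan is to repeat, with three nested annuli instead of two, the ``coloring'' argument already used for Facts~\ref{f716b} and~\ref{f814d}. Suppose for contradiction that such a configuration of $11$ points of weight $1/2$ exists. Pick $9$ of the points lying in $1\le\rho\le 1.19$ and call them \emph{red}. Since at least $10$ of the $11$ points lie in $1\le\rho\le 1.24$ and the $9$ red points already lie there, at least one further point lies in $1\le\rho\le 1.24$; call it \emph{green}. The remaining eleventh point, which we only know lies in $1\le\rho\le 1.45$, we call \emph{blue}. So the data is: $9$ red points in $1\le\rho\le 1.19$, one green point in $1\le\rho\le 1.24$, and one blue point in $1\le\rho\le 1.45$.

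Next I would put the $11$ points in circular order and estimate the $11$ angles between consecutive points. Every point has weight $1/2$ and lies in $1\le\rho\le 1.45$, and since $1.45<1+q$ (recall $q=1/p=0.7097\ldots$ from~(\ref{pq})), Lemma~\ref{two} applies with the truncation $\min(1+q,\cdot)$ inactive: the angle between a point in $a\le\rho\le b$ and a consecutive point in $c\le\rho\le d$ is at least $\Phi_q(\min(a,c),\max(b,d))$. Therefore a red--red gap is at least $\Phi_q(1,1.19)$, any gap incident to the green point (and not to the blue point) is at least $\Phi_q(1,1.24)$, and any gap incident to the blue point is at least $\Phi_q(1,1.45)$.

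Then split into two cases according to whether the green and blue points are consecutive in the circular order. If they are, the $11$ gaps break up as $8$ red--red gaps, one red--green gap, one green--blue gap, and one red--blue gap, so their sum is at least $8\,\Phi_q(1,1.19)+\Phi_q(1,1.24)+2\,\Phi_q(1,1.45)$. If they are not consecutive, the gaps break up as $7$ red--red gaps, two red--green gaps, and two red--blue gaps, with sum at least $7\,\Phi_q(1,1.19)+2\,\Phi_q(1,1.24)+2\,\Phi_q(1,1.45)$. Evaluating the three quantities $\Phi_q(1,1.19)\approx 34^{\circ}.70$, $\Phi_q(1,1.24)\approx 33^{\circ}.26$ and $\Phi_q(1,1.45)\approx 26^{\circ}.35$, the first sum exceeds $363^{\circ}$ and the second exceeds $362^{\circ}$, both contradicting the fact that the $11$ angles around the origin sum to exactly $360^{\circ}$. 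If instead more than $9$ points lie in $1\le\rho\le 1.19$ (so the ``green'' point may be taken red), or more points are forced into the inner annuli, the relevant $\Phi_q$-values only grow, so the configuration just described is the extremal one and the contradiction persists.

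The computation is routine; the only things that need a little care are verifying that Lemma~\ref{two} applies in its simplified form (which it does, since all points lie inside $\rho=1.45<1+q$), keeping correct track of how many gaps of each colour pattern occur in the two circular arrangements, and computing the $\Phi_q$-values precisely enough -- in the non-adjacent case the margin above $360^{\circ}$ is only about two degrees.
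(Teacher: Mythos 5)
Your proof is correct and is essentially the paper's own argument: the same three-annulus coloring, the same case split on whether the two non-innermost points are consecutive, and the same lower bounds $8\Phi_q(1,1.19)+\Phi_q(1,1.24)+2\Phi_q(1,1.45)$ and $7\Phi_q(1,1.19)+2\Phi_q(1,1.24)+2\Phi_q(1,1.45)$, both exceeding $360^{\circ}$. Your explicit red/green/blue labeling and the remark that Lemma \ref{two} applies untruncated (since $1.45<1+q$) only make explicit what the paper leaves implicit.
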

\begin{proof}
Color $9$ of the points in $1\le \rho\le 1.19$ red, and the remaining two points blue. If the two blue
points are not consecutive, then the sum of the $11$ angles is at least
$7\Phi_q\left(1,1.19\right)+2\Phi_q\left(1,1.24\right)+2\Phi_q\left(1,1.45\right)> 7\cdot 34^{\circ}.6+2\cdot 33^{\circ}.2+2\cdot 26^{\circ}.3 =361^{\circ}.2$. Otherwise, the sum of the angles is at least $8\Phi_q\left(1,1.19\right)+\Phi_q\left(1,1.24\right)+2\Phi_q\left(1,1.45\right)> 8\cdot 34^{\circ}.6+ 33^{\circ}.2+2\cdot 26^{\circ}.3 =362^{\circ}.6$. In either case, we obtain a contradiction.
\end{proof}
\begin{lemma}\label{912}
It is impossible to have $9$ points of weight $1$ in $p\le \rho\le 1+p$ and $12$ points of weight $1/2$ in $1\le \rho \le 1+p$.
\end{lemma}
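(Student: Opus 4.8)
The plan is to follow the template of Lemmas \ref{716} and \ref{814}: assume for contradiction that $9$ points of weight $1$ in $p\le\rho\le 1+p$ and $12$ points of weight $1/2$ in $1\le\rho\le 1+p$ can coexist, and then chain Facts \ref{f912a}--\ref{f912f} so as to trap the weight-$1/2$ points inside successively narrower inner annuli, until an angle-sum estimate around the origin exceeds $360^\circ$.

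First I would use Fact \ref{f912a} to show the weight-$1/2$ points cluster near $\rho=1$: since the hypothesis already supplies $9$ weight-$1$ points in $p\le\rho\le 1+p$, Fact \ref{f912a} forces at most $3$ of the $12$ weight-$1/2$ points into $1.22\le\rho\le 1+p$, hence at least $9$ of them lie in $1\le\rho\le 1.22$. Feeding this into Fact \ref{f912b} eliminates every weight-$1$ point from $p\le\rho\le 1.85$, so all $9$ weight-$1$ points must lie in the thin annulus $1.85\le\rho\le 1+p$. This is the decisive structural gain, and it is precisely what makes the remaining facts available.

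With the weight-$1$ points confined to $1.85\le\rho\le 1+p$, I would apply Facts \ref{f912c}, \ref{f912d}, \ref{f912e} in succession. Fact \ref{f912c} caps the number of weight-$1/2$ points in $1.45\le\rho\le 1+p$ at $1$, so at least $11$ of them lie in $1\le\rho\le 1.45$; Fact \ref{f912d} caps those in $1.24\le\rho\le 1+p$ at $2$, so at least $10$ lie in $1\le\rho\le 1.24$; Fact \ref{f912e} caps those in $1.19\le\rho\le 1+p$ at $3$, so at least $9$ lie in $1\le\rho\le 1.19$. Because the three inner annuli are nested, one can now pick out $11$ weight-$1/2$ points of which $9$ sit in $1\le\rho\le 1.19$ and $10$ sit in $1\le\rho\le 1.24$ --- exactly the configuration forbidden by Fact \ref{f912f}. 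That contradiction completes the proof.

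The one genuine obstacle is the issue flagged at the start of Section 7: the crude containment $p\le\rho\le 1+p$ for the weight-$1$ points is no longer enough to force a contradiction directly, so the argument must first leverage the clustering of the weight-$1/2$ points to pin the weight-$1$ points into $1.85\le\rho\le 1+p$ before the angular budget becomes tight. After that, everything is routine: the thresholds $1.19$, $1.22$, $1.24$, $1.45$, $1.85$ are tuned so that each intermediate fact's estimate via Lemma \ref{BEsoss} just clears $360^\circ$, and those estimates are the elementary (if tedious) computations already carried out in Facts \ref{f912a}--\ref{f912f}.
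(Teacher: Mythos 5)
Your proposal is correct and follows exactly the paper's own argument: chaining Facts \ref{f912a}--\ref{f912f} in the same order, first confining the weight-$1/2$ points to $1\le\rho\le 1.22$, then pushing all weight-$1$ points into $1.85\le\rho\le 1+p$, and finally producing the nested configuration forbidden by Fact \ref{f912f}. The counting at each step (at most $3$, then $1$, $2$, $3$ weight-$1/2$ points in the respective outer annuli) matches the paper's deductions, so nothing further is needed.
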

\begin{proof}
Assume it is possible. By fact \ref{f912a}, at least $9$ points of weight $1/2$ are in $1\le\rho\le 1.22$. Fact \ref{f912b} implies that all points of weight $1$ are in $1.85\le \rho \le 1+p$. Fact \ref{f912c} then tells us that at least $11$ points of weight $1/2$ lie in $1\le \rho \le 1.45$, and facts \ref{f912d} and \ref{f912e} guarantee that at least $10$ of these lie in $1\le \rho \le 1.24$, and at least $9$ lie in $1\le \rho \le 1.19$. However, fact \ref{f912f} shows that this latest situation is impossible.
\end{proof}


\section{\bf 10 points of weight 1 and 10 points of weight 1/2}

This is by far the most delicate case. The estimates must be much more precise in order to produce the desired contradiction.
This is where we will finally be able to explain why the choice $p=1.409$. Fortunately, the approach is almost identical to the one in the previous section.

\begin{fact}\label{f1010a}
It is impossible to have $10$ points of weight $1$ in $p\le \rho \le 1+p$ and $2$ points of weight $1/2$ in $1.2931\le \rho\le 1+p$.
\end{fact}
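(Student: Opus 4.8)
The plan is to run the same template as in Facts \ref{f814a} and \ref{f912a}, but to push the angle estimates to higher precision, since this configuration sits essentially at the boundary of what the method can rule out.

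Assume, for contradiction, that there are $10$ points of weight $1$ in the annulus $p\le\rho\le 1+p$ and $2$ points of weight $1/2$ in the annulus $1.2931\le\rho\le 1+p$, twelve points in all. Sort them by amplitude into a circular ordering (legitimate, since no two of the points can share an amplitude) and consider the twelve angles at the origin cut off by consecutive pairs. Let $n_{11}$, $n_{hh}$, $n_{1h}$ be the numbers of these angles joining, respectively, two points of weight $1$, two points of weight $1/2$, and two points of different weight, so that $n_{11}+n_{hh}+n_{1h}=12$. By Remark \ref{one}, each $n_{11}$-angle is at least $\Phi_p(p,1+p)$ and each $n_{1h}$-angle is at least $\Phi_p(\min(p,1.2931),1+p)=\Phi_p(1.2931,1+p)$; by Lemma \ref{two} (applicable because the inner radius $1.2931$ of the weight-$1/2$ annulus is at most $1+q$), each $n_{hh}$-angle is at least $\Phi_q(1.2931,\min(1+q,1+p))=\Phi_q(1.2931,1+q)$. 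The feasibility condition $0\le R-d\le r\le R$ of Lemma \ref{BEsoss} holds in all three instances, since $(1+p)-p=(1+q)-q=1$ and $1\le 1.2931<p<1+q<1+p$.

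Since there are exactly two points of weight $1/2$, in the circular ordering they are either adjacent or not. If they are adjacent then $[n_{11},n_{hh},n_{1h}]=[9,1,2]$ and the sum $\mathcal{A}$ of the twelve angles is at least $9\,\Phi_p(p,1+p)+\Phi_q(1.2931,1+q)+2\,\Phi_p(1.2931,1+p)$; otherwise $[n_{11},n_{hh},n_{1h}]=[8,0,4]$ and $\mathcal{A}\ge 8\,\Phi_p(p,1+p)+4\,\Phi_p(1.2931,1+p)$. The second bound is a comfortable $362^{\circ}.8\ldots$, contradicting $\mathcal{A}=360^{\circ}$; the first comes out to only about $360^{\circ}.003$, still strictly greater than $360^{\circ}$, hence still a contradiction.

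The crux, and the reason this is by far the most delicate case, is precisely the subcase $[9,1,2]$: the bound there beats $360^{\circ}$ by just a few thousandths of a degree, so the three constants $\Phi_p(p,1+p)=31^{\circ}.25\ldots$, $\Phi_q(1.2931,1+q)=22^{\circ}.28\ldots$ and $\Phi_p(1.2931,1+p)=28^{\circ}.21\ldots$ must be evaluated well past their two-decimal approximations (four or five places) before one can be certain the strict inequality survives. This razor-thin margin is exactly the balance that forces the specific value $p=1.409$ fixed in (\ref{pq}); the split case $[8,0,4]$ and the feasibility checks are routine.
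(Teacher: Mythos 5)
Your proposal is correct and follows essentially the same route as the paper: the same circular ordering with the split $[n_{11},n_{hh},n_{1h}]\in\{[8,0,4],[9,1,2]\}$, the same three bounds $\Phi_p(p,1+p)$, $\Phi_q(1.2931,1+q)$, $\Phi_p(1.2931,1+p)$, and the same razor-thin winning margin of about $0.0015^{\circ}$ in the tight case $[9,1,2]$. Your added checks of the applicability of Lemma \ref{two} and of the feasibility condition in Lemma \ref{BEsoss} are fine and consistent with the paper's implicit use of them.
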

\begin{proof}
We need to estimate the sum
\begin{equation*}
\mathcal{A}=n_{11}\cdot \Phi_p\left(p,1+p\right)+n_{hh}\cdot \Phi_q\left(1.2931,1+q\right)+n_{1h}\cdot \Phi_p\left(1.2931,1+p\right),\quad \text{where}
\end{equation*}
$\Phi_p\left(p,1+p\right)=31^{\circ}.2555\ldots, \Phi_q\left(1.2931,1+q\right)= 22^{\circ}.2806\ldots, \Phi_p\left(1.2931,1+p\right)=28^{\circ}.2107\ldots $, and $[n_{11},n_{hh},n_{1h}]$ equals either $[8, 0, 4]$ or $[9,1,2]$.
The minimum of $\mathcal{A}=360^{\circ}.002\ldots$ is reached when $[n_{11},n_{hh},n_{1h}]=[9, 1, 2]$,
$\mathcal{A}> 9\cdot 31^{\circ}.2555+22^{\circ}.2806+2\cdot 28^{\circ}.2107=360^{\circ}.0015$, a contradiction.
\end{proof}

\begin{fact}\label{f1010b}
It is impossible to have $1$ point of weight $1$ in the annulus $p \le \rho \le 1.59$ and $9$ points of weight $1/2$ in $1\le \rho\le 1.2931$.
\end{fact}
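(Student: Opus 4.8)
The plan is to reproduce, almost verbatim, the argument used for facts~\ref{f814b} and~\ref{f912b}. Suppose for contradiction that such a configuration exists; it has exactly ten points — one point $P$ of weight $1$ lying in $p\le\rho\le 1.59$, together with nine points of weight $1/2$ all lying in $1\le\rho\le 1.2931$. I would sort these ten points in circular order and look at the ten angles that consecutive points subtend at the origin $O$. Exactly two of those angles are incident to $P$ (the angles to its two circular neighbors); the remaining eight are each spanned by a pair of consecutive points of weight $1/2$.

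Next I would lower-bound each type of angle. For the eight homogeneous angles, both endpoints lie in $1\le\rho\le 1.2931$; since $1.2931<1+q$, Lemma~\ref{two} applies with $\min(1+q,\max(b,d))=1.2931$, so each such angle is at least $\Phi_q(1,1.2931)$. For the two angles at $P$, one endpoint has weight $1$, so Remark~\ref{one} applies with the merged annulus $\min(p,1)\le\rho\le\max(1.59,1.2931)$, i.e. $1\le\rho\le1.59$; each such angle is at least $\Phi_p(1,1.59)$. Both $\Phi$-values are well defined because the triples $(r,R,d)=(1,1.2931,q)$ and $(1,1.59,p)$ satisfy $0\le R-d\le r\le R$, the hypothesis of Lemma~\ref{BEsoss}. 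Summing, the full angle around $O$ is at least $8\,\Phi_q(1,1.2931)+2\,\Phi_p(1,1.59)$, and the claim follows once one checks that this exceeds $360^{\circ}$.

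Numerically $\Phi_q(1,1.2931)=2\arcsin\frac{q}{2\cdot 1.2931}=31^{\circ}.85\ldots$ and $\Phi_p(1,1.59)=2\arcsin\frac{p}{2\cdot 1.59}=52^{\circ}.60\ldots$, so the sum comes out just above $360^{\circ}$. The one delicate point — and, I expect, the real crux of this whole section — is that the margin is minuscule, only a few hundredths of a degree, so the radii $1.59$ and $1.2931$ (and hence the specific value $p=1.409$) have to be chosen with genuine precision; this is presumably the promised explanation for the choice of $p$. Beyond keeping enough decimal places in the computation, there is no substantive obstacle: the final estimate is the same one-line calculation as in facts~\ref{f814b} and~\ref{f912b}.
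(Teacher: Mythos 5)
Your proposal matches the paper's own proof of this fact: the same circular ordering with two mixed angles at the weight-$1$ point bounded by $\Phi_p(1,1.59)$ and eight homogeneous angles bounded by $\Phi_q(1,1.2931)$, giving $8\cdot 31^{\circ}.85\ldots+2\cdot 52^{\circ}.60\ldots\approx 360^{\circ}.05>360^{\circ}$. Your numerical values and the observation that the margin is only a few hundredths of a degree agree with the paper's computation, so the argument is correct and essentially identical.
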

\begin{proof}
Same reasoning as for fact \ref{f912b}. The angle between two consecutive points of weight $1/2$ is at least $\Phi_q(1,1.2931)=31^\circ.8557\ldots$, while the angle determined by the point of weight $1$ and a point of weight $1/2$ is at least $\Phi_p(1,1.59)=52^\circ.6013\ldots$. It follows that the sum of the $10$ angles is at least $8\cdot 31^\circ.8557+2\cdot52^\circ.6013=360^{\circ}.0482$, a contradiction.
\end{proof}

The above two results imply that if we are to have $10$ points of weight $1$ and $10$ of weight $1/2$, then all points of weight $1$ are in the annulus $1.59\le \rho \le 1+p$. The following statements are just analogs of facts \ref{f912c}, \ref{f912d}, and \ref{f912e} from the previous section. In order to save space, we group them together.

\begin{fact}\label{bigfact}
It is impossible to have $10$ points of weight $1$ in the annulus $1.59 \le \rho \le 1+p$ and any of the following
\begin{itemize}
\item[a.]{$1$ of weight $1/2$ in $1.2571\le \rho \le 1+p$.}
\item[b.]{$2$ of weight $1/2$ in $1.1513\le \rho\le 1+p$.}
\item[c.]{$3$ of weight $1/2$ in $1.1254\le \rho \le 1+p$.}
\item[d.]{$4$ of weight $1/2$ in $1.1138\le \rho\le 1+p$.}
\item[e.]{$5$ of weight $1/2$ in $1.1072\le \rho \le 1+p$.}
\end{itemize}
\end{fact}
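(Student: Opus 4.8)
The plan is to handle all five parts (a)--(e) in one uniform stroke, following exactly the mechanism of facts \ref{f912c}, \ref{f912d} and \ref{f912e}. Fix one part and write $(j,\rho_0)$ for the corresponding pair, so $(j,\rho_0)$ is one of $(1,1.2571)$, $(2,1.1513)$, $(3,1.1254)$, $(4,1.1138)$, $(5,1.1072)$; suppose for contradiction that we have the ten points of weight $1$ in $1.59\le\rho\le1+p$ together with $j$ points of weight $1/2$ in $\rho_0\le\rho\le1+p$. I would sort these $10+j$ points in circular order and split the $10+j$ angles they subtend at the origin into three types: $n_{11}$ angles between two consecutive weight-$1$ points, $n_{hh}$ between two consecutive weight-$1/2$ points, and $n_{1h}$ between two points of different weight. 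Each angle of the first type is at least $\Phi_p(1.59,1+p)$ (its endpoints lie in $[1.59,1+p]$ at distance $\ge p$ --- here the hypothesis that \emph{all} weight-$1$ points are confined to $1.59\le\rho\le1+p$, which is the content of facts \ref{f1010a} and \ref{f1010b}, is essential); each angle of the second type is at least $\Phi_q(\rho_0,1+q)$ by Lemma \ref{two}, using $\rho_0<1+q<1+p$; and each angle of the third type is at least $\Phi_p(\rho_0,1+p)$ by Remark \ref{one}, using $\rho_0<1.59$. Hence
\[
360^{\circ}\ \ge\ \mathcal{A}\ :=\ n_{11}\,\Phi_p(1.59,1+p)+n_{hh}\,\Phi_q(\rho_0,1+q)+n_{1h}\,\Phi_p(\rho_0,1+p),
\]
and I would aim to show $\mathcal{A}>360^{\circ}$.

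The next step is to enumerate the possible profiles $[n_{11},n_{hh},n_{1h}]$. Since there are ten weight-$1$ points but only $j\le5$ weight-$1/2$ points, in the circular order the weight-$1/2$ points form $b$ maximal runs for some $1\le b\le j$, separated by $b$ nonempty runs of weight-$1$ points; counting adjacencies gives $[n_{11},n_{hh},n_{1h}]=[\,10-b,\ j-b,\ 2b\,]$, and indeed $(10-b)+(j-b)+2b=10+j$. Thus $\mathcal{A}$ is affine in the single parameter $b$, so its minimum over $1\le b\le j$ is attained at $b=1$ or $b=j$; the coefficient of $b$ is $2\Phi_p(\rho_0,1+p)-\Phi_p(1.59,1+p)-\Phi_q(\rho_0,1+q)$, and a quick numerical check shows this is negative for every one of the five thresholds, so the minimum is always at $b=j$ --- the case in which the weight-$1/2$ points are pairwise separated by weight-$1$ points. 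For that profile $[10-j,0,2j]$ one has $\mathcal{A}\ge(10-j)\,\Phi_p(1.59,1+p)+2j\,\Phi_p(\rho_0,1+p)$, and it remains only to plug in numbers.

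I expect the single genuine obstacle to be numerical: these are the tightest inequalities anywhere in the paper. One has $\Phi_p(1.59,1+p)=2\arcsin\frac{p}{2(1+p)}\approx34.009^{\circ}$, while $\Phi_p(\rho_0,1+p)$ (the $\arccos$ branch for these small $\rho_0$, namely $\arccos\frac{(1+p)^2+\rho_0^2-p^2}{2(1+p)\rho_0}$) is about $26.96^{\circ}$, $21.99^{\circ}$, $20.33^{\circ}$, $19.50^{\circ}$, $19.00^{\circ}$ for the five thresholds; the resulting sums $(10-j)\,\Phi_p(1.59,1+p)+2j\,\Phi_p(\rho_0,1+p)$ come out near $360.01^{\circ}$, $360.02^{\circ}$, $360.02^{\circ}$, $360.04^{\circ}$, $360.04^{\circ}$ --- all above $360^{\circ}$, but only by hundredths (and in part (a) only by thousandths) of a degree. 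So the $\Phi$-values have to be evaluated to four or five significant figures for the argument to be rigorous; a coarser evaluation would in places wrongly suggest $\mathcal{A}<360^{\circ}$, which is precisely why the thresholds $\rho_0$ were picked with four decimals and why this ``$10$ and $10$'' case is the delicate one that forces $p=1.409$. Apart from this careful arithmetic --- together with verifying the sign of the $b$-coefficient so that $b=j$ really is the worst case --- I foresee no conceptual hurdle; the contradiction $\mathcal{A}>360^{\circ}$ then closes each of (a)--(e).
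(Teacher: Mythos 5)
Your proposal is correct and follows essentially the same route as the paper: the same decomposition into $n_{11}$, $n_{hh}$, $n_{1h}$ angles bounded by $\Phi_p(1.59,1+p)$, $\Phi_q(\rho_0,1+q)$, $\Phi_p(\rho_0,1+p)$, the same profiles $[10-b,\,j-b,\,2b]$, and the same worst case $n_{hh}=0$ with sums just over $360^{\circ}$ (the paper gets $360^{\circ}.007$, $360^{\circ}.02$, $360^{\circ}.02$, $360^{\circ}.04$, $360^{\circ}.03$, matching your figures). The only cosmetic difference is that you justify the worst-case profile by the affine-in-$b$ sign check, whereas the paper simply enumerates the finitely many profiles and reports the minimum.
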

\begin{proof}
Part a. follows from $9\cdot \Phi_{p}(1.59, 1+p)+2\cdot \Phi_p(1.2571,1+p)=360^{\circ}.0067\ldots.$
For part b. the sum of the angles is at least $n_{11}\Phi_p(1.59, 1+p)+n_{hh}\Phi_q(1.153,1+q)+n_{1h}\Phi_p(1.1513,1+p)$ where
$[n_{11},n_{hh},n_{1h}]\in\{[8,0,4], [9,1,2]\}$. The minimum is $360^{\circ}.022\ldots$ , when $[n_{11},n_{hh},n_{1h}]= [8,0,4]$.
Similarly, in case c. the sum of the angles is at least $n_{11}\Phi_p(1.59, 1+p)+n_{hh}\Phi_q(1.1254,1+q)+n_{1h}\Phi_p(1.1254,1+p)$ where
$[n_{11},n_{hh},n_{1h}]\in\{[7,0,6], [8,1,4], [9,2,2]\}$. The minimum is $360^{\circ}.021\ldots$ , when $[n_{11},n_{hh},n_{1h}]= [7,0,6]$.
Likewise, in case d. the sum of the angles is at least $n_{11}\Phi_p(1.59, 1+p)+n_{hh}\Phi_q(1.1138,1+q)+n_{1h}\Phi_p(1.1138,1+p)$ where
$[n_{11},n_{hh},n_{1h}]\in\{[6,0,8],[7,1,6], [8,2,4], [9,3,2]\}$. The minimum is $360^{\circ}.036\ldots$ , when $[n_{11},n_{hh},n_{1h}]= [6,0,8]$. Finally, in part e. the sum is at least $n_{11}\Phi_p(1.59, 1+p)+n_{hh}\Phi_q(1.1072,1+q)+n_{1h}\Phi_p(1.1072,1+p)$ where $[n_{11},n_{hh},n_{1h}]\in\{[5,0,10], [6,1,8],[7,2,6], [8,3,4], [9,4,2]\}$. The minimum is $360^{\circ}.033\ldots$ , when $[n_{11},n_{hh},n_{1h}]= [5,0,10]$.
\end{proof}

Finally, we can prove the hardest case
\begin{lemma}\label{1010}
It is impossible to have $10$ points of weight $1$ in $p\le \rho \le 1+p$ and $10$ points of weight $1/2$ in $1\le \rho \le 1+p$.
\end{lemma}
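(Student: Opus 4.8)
The plan is to run exactly the cascade used for Lemma~\ref{912}, but with the intermediate radii pushed to the values at which the concluding angle count is sharpest. Assume for contradiction that one has $10$ points of weight $1$ and $10$ points of weight $1/2$ as in the statement. By Fact~\ref{f1010a}, at most one point of weight $1/2$ lies in $1.2931\le\rho\le 1+p$, so at least $9$ points of weight $1/2$ lie in $1\le\rho\le 1.2931$. Feeding this into Fact~\ref{f1010b} forces every point of weight $1$ out of the range $p\le\rho\le 1.59$, so all $10$ points of weight $1$ lie in $1.59\le\rho\le 1+p$ --- precisely the hypothesis of Fact~\ref{bigfact}.

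Next I would apply the five parts of Fact~\ref{bigfact} one after another. Part~(a) forbids even a single point of weight $1/2$ in $1.2571\le\rho\le 1+p$, so all $10$ of them lie in $1\le\rho\le 1.2571$; part~(b) then puts at least $9$ of them in $1\le\rho\le 1.1513$, part~(c) at least $8$ in $1\le\rho\le 1.1254$, part~(d) at least $7$ in $1\le\rho\le 1.1138$, and part~(e) at least $6$ in $1\le\rho\le 1.1072$. The problem has now been reduced to a one-dimensional one: show that $10$ points of weight $1/2$, pairwise at distance at least $q$, cannot be fitted into $1\le\rho\le 1.2571$ with this nested distribution.

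For that last step, sort the $10$ points of weight $1/2$ in circular order and bound each of the $10$ consecutive angles from below with Lemma~\ref{two}: a gap whose endpoints have radial upper bounds $B$ and $B'$ subtends at least $\Phi_q(1,\max(B,B'))$, the truncation in Lemma~\ref{two} being inactive because every bound in play is well under $1+q$. Call the (at least) $6$ points in $1\le\rho\le 1.1072$ \emph{inner}, and tag the remaining (at most) $4$ by their finer bounds $1.1138,1.1254,1.1513,1.2571$. Since $\Phi_q(1,r)$ decreases in $r$, every inner--inner gap contributes the largest possible term $\Phi_q(1,1.1072)$, so the cyclic pattern least favorable to us is the one with no two outer points adjacent: it creates only two inner--inner gaps and charges each outer point to exactly two of the smaller angles, giving the sum $2\Phi_q(1,1.1072)+2\bigl(\Phi_q(1,1.1138)+\Phi_q(1,1.1254)+\Phi_q(1,1.1513)+\Phi_q(1,1.2571)\bigr)$; any other arrangement (two outer points consecutive, or a larger outer cluster) only increases the total. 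It remains to check that for $p=1.409$ this minimum already exceeds $360^{\circ}$, and the contradiction follows.

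The delicate point is the calibration, not the bookkeeping. The minimum produced in the last step sits only a fraction of a degree above $360^{\circ}$, so the thresholds $1.59$, $1.2571$, $1.1513$, $1.1254$, $1.1138$, $1.1072$ occurring in Facts~\ref{f1010a}, \ref{f1010b} and~\ref{bigfact} cannot be chosen independently: each is picked so that its own instance is only marginally true, and $p$ must be tuned simultaneously so that all those margins, plus the margin in the final circular count, stay strictly positive. Finding one value of $p$ that threads all of these near-equalities at once --- this is why the paper commits to $p=1.409$ --- is the real content of the proof; once the constants are fixed, every individual inequality is a routine numerical evaluation of $\Phi$ via~(\ref{phi}).
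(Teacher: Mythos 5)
Your proposal is correct and follows essentially the same route as the paper: Facts~\ref{f1010a} and~\ref{f1010b} push all weight-$1$ points into $1.59\le\rho\le 1+p$, Fact~\ref{bigfact} yields the nested radial bounds $1.2571,1.1513,1.1254,1.1138,1.1072$ on the weight-$1/2$ points, and the final circular count gives the same lower bound $2\Phi_q(1,1.1072)+2\bigl(\Phi_q(1,1.1138)+\Phi_q(1,1.1254)+\Phi_q(1,1.1513)+\Phi_q(1,1.2571)\bigr)>360^{\circ}$. The only cosmetic difference is that you justify the worst-case arrangement (no two outer points adjacent) by a monotonicity argument, whereas the paper checks the $5040$ circular permutations directly; both give the same minimum.
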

\begin{proof}
Assume it is possible. Then from fact \ref{bigfact} it follows that all $10$ points of weight $1/2$ are in $1\le \rho \le 1.12571$, at least $9$ of which are in $1\le \rho \le 1.1513$, at least $8$ of which are in $1\le \rho \le 1.1254$, at least $7$ of which must be in $1\le \rho \le 1.1138$, and at least $6$ of which must lie in $1\le \rho \le 1.1072$. Color $6$ of the points in $1\le \rho \le 1.1072$ red, and the remaining $4$ points blue. Analyzing the $10!/6!=5040$ different permutations shows that the smallest possible sum is obtained when no two blue points are consecutive. In this case we have that the sum of the angles is at least
\begin{equation*}
2\Phi_q(1,1.072)+2\Phi_q(1,1.1138)+2\Phi_q(1,1.1254)+2\Phi_q(1,1153)+2\Phi_q(1,1.2571)=360^{\circ}.0047\ldots.
\end{equation*}
This produces the contradiction.
\end{proof}

\begin{obs} It should now be clear why this case was so difficult. In every single step, the lower bounds for the sum of the angles are barely over $360^{\circ}$. This is also the reason that the choice $p=1.409$ is crucial. If we change $p$ to $1.408$ or to $1.410$, the above argument fails.
\end{obs}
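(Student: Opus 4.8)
The plan is to reuse the cascading scheme already deployed for Lemmas~\ref{814} and~\ref{912}: assume, for contradiction, a configuration with $10$ points of weight $1$ and $10$ points of weight $1/2$ meeting the hypotheses of Theorem~\ref{weight}, then invoke Facts~\ref{f1010a}, \ref{f1010b} and~\ref{bigfact} in turn to squeeze the weight-$1/2$ points into ever thinner annuli until the angles around $O$ are forced to total more than $360^{\circ}$.

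Concretely, first I would apply Fact~\ref{f1010a}: since one cannot have $2$ weight-$1/2$ points in $1.2931\le\rho\le 1+p$, at least $9$ of the weight-$1/2$ points lie in $1\le\rho\le 1.2931$. Plugging this into Fact~\ref{f1010b} — which rules out a weight-$1$ point in $p\le\rho\le 1.59$ coexisting with $9$ weight-$1/2$ points in $1\le\rho\le 1.2931$ — forces \emph{every} weight-$1$ point into $1.59\le\rho\le 1+p$. With all ten weight-$1$ points now trapped in $1.59\le\rho\le 1+p$, Fact~\ref{bigfact} becomes applicable: part~(a) gives that all ten weight-$1/2$ points lie in $1\le\rho\le 1.2571$, and parts~(b)--(e) give in succession that at least $9$ of them lie in $1\le\rho\le 1.1513$, at least $8$ in $1\le\rho\le 1.1254$, at least $7$ in $1\le\rho\le 1.1138$, and at least $6$ in $1\le\rho\le 1.1072$.

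The endgame is a counting argument on the weight-$1/2$ points alone. Put them in circular order; by Lemma~\ref{two} the angle between consecutive points lying in $1\le\rho\le b$ and $1\le\rho\le d$ is at least $\Phi_q(1,\max(b,d))$, the cap $\min(1+q,\cdot)$ being inactive since every radius is below $1.2571<1+q$. Color red the six points known to lie in $1\le\rho\le 1.1072$ and blue the remaining four, whose bounds are $1.1138$, $1.1254$, $1.1513$, and $1.2571$. Because $\Phi_q(1,\cdot)$ is decreasing, spreading the blue points apart trades red--red edges (large guaranteed angle) for blue--red edges (smaller guaranteed angle), which lowers the bound; running through all $10!/6!=5040$ orderings confirms that the guaranteed angle sum is minimized precisely when no two blue points are consecutive. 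In that configuration each blue point is flanked by red points and so contributes two edges carrying its own bound, while the two remaining edges are red--red, yielding
\begin{equation*}
2\bigl[\Phi_q(1,1.1072)+\Phi_q(1,1.1138)+\Phi_q(1,1.1254)+\Phi_q(1,1.1513)+\Phi_q(1,1.2571)\bigr]=360^{\circ}.0047\ldots>360^{\circ},
\end{equation*}
which is the contradiction.

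The part I expect to be genuinely delicate is the razor-thin margin: this lemma is the tight case of the whole paper, and every inequality in the chain — including the ones hidden inside Facts~\ref{f1010a}, \ref{f1010b}, and~\ref{bigfact} — clears $360^{\circ}$ by only a few hundredths of a degree, so the argument survives only for $p=1.409$ and collapses at $p=1.408$ or $p=1.410$. A second point needing care is making the minimization over circular orderings rigorous (via the exchange/monotonicity observation above, or an explicit enumeration of the $5040$ cases), rather than merely asserting that ``no two blue points consecutive'' is the worst arrangement.
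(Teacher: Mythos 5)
Your proposal is correct and follows the paper's own route essentially verbatim: Facts \ref{f1010a} and \ref{f1010b} force every weight-$1$ point into $1.59\le\rho\le 1+p$, Fact \ref{bigfact} then nests the weight-$1/2$ points into the annuli with outer radii $1.2571$, $1.1513$, $1.1254$, $1.1138$, $1.1072$, and your red/blue endgame reproduces exactly the sum $2\bigl[\Phi_q(1,1.1072)+\Phi_q(1,1.1138)+\Phi_q(1,1.1254)+\Phi_q(1,1.1513)+\Phi_q(1,1.2571)\bigr]=360^{\circ}.0047\ldots$ of Lemma \ref{1010}, whose razor-thin margins are precisely what the observation is pointing at. The only (minor, and welcome) difference is that you justify the worst-case circular ordering by a monotonicity/exchange argument rather than the paper's enumeration of the $10!/6!=5040$ permutations.
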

This leaves us with the last case, which fortunately is not that difficult.

\section{\bf 11 points of weight 1 and 8 points of weight 1/2}
\begin{fact}\label{f118a}
It is impossible to have $11$ points of weight $1$ in $p\le \rho \le 1+p$ and any of the following
\begin{itemize}
\item[a.]{$1$ of weight $1/2$ in $1.2\le \rho \le 1+p$.}
\item[b.]{$2$ of weight $1/2$ in $1.12\le \rho\le 1+p$.}
\end{itemize}
\end{fact}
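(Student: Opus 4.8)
The plan is to handle both parts by the circular-ordering angle count used repeatedly above: sort all the points by amplitude, bound from below each angle subtended at $O$ by a consecutive pair, and derive a contradiction by showing the total exceeds $360^{\circ}$. Since this fact opens the last case, no refined information about the positions of the points of weight $1$ is yet available, so each such point is known only to lie in $p\le\rho\le 1+p$. For part (a) there are $12$ points in all, hence $12$ consecutive-pair angles; with a single point of weight $1/2$, flanked on both sides by points of weight $1$, exactly two angles are mixed and the remaining ten join consecutive points of weight $1$, so $[n_{11},n_{hh},n_{1h}]=[10,0,2]$ with no freedom. By Remark \ref{one} a weight-$1$/weight-$1$ angle is at least $\Phi_p(p,1+p)=31^{\circ}.25\ldots$, and a mixed angle is at least $\Phi_p(1.2,1+p)=24^{\circ}.57\ldots$ (the value already computed in Fact \ref{f716a}); hence the sum of the angles is at least $10\cdot 31^{\circ}.25+2\cdot 24^{\circ}.57>361^{\circ}$, a contradiction.

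For part (b) there are $13$ points, hence $13$ angles, and $[n_{11},n_{hh},n_{1h}]$ equals $[10,1,2]$ if the two points of weight $1/2$ are consecutive in the circular ordering and $[9,0,4]$ otherwise. A weight-$1$/weight-$1$ angle is again at least $\Phi_p(p,1+p)=31^{\circ}.25\ldots$; a mixed angle is at least $\Phi_p(1.12,1+p)=19^{\circ}.96\ldots$; and, since both points of weight $1/2$ have amplitude at least $1.12\le 1+q$, Lemma \ref{two} shows that a weight-$1/2$/weight-$1/2$ angle is at least $\Phi_q(1.12,1+q)=16^{\circ}.40\ldots$. In the consecutive case the total is at least $10\cdot 31^{\circ}.25+16^{\circ}.40+2\cdot 19^{\circ}.96>368^{\circ}$, and in the non-consecutive case it is at least $9\cdot 31^{\circ}.25+4\cdot 19^{\circ}.96>361^{\circ}$; both exceed $360^{\circ}$, which is the desired contradiction.

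The only place where the margin is narrow is the non-consecutive sub-case of part (b): it needs $\Phi_p(1.12,1+p)>\bigl(360^{\circ}-9\,\Phi_p(p,1+p)\bigr)/4\approx 19^{\circ}.69$, and this holds because $\Phi_p(1.12,1+p)=19^{\circ}.96\ldots$. As in the preceding section, it is exactly this tight inequality that pins down the choice $p=1.409$; the individual evaluations of $\Phi_p$ and $\Phi_q$ from (\ref{phi}) and (\ref{pq}) are routine.
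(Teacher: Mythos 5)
Your argument is correct and is essentially identical to the paper's proof: the same circular-ordering angle count with the same case split $[n_{11},n_{hh},n_{1h}]=[10,0,2]$ for part (a) and $[10,1,2]$ or $[9,0,4]$ for part (b), using the same bounds $\Phi_p(p,1+p)$, $\Phi_p(1.2,1+p)$, $\Phi_q(1.12,1+q)$ and $\Phi_p(1.12,1+p)$. Two trivial remarks: the last value is $19^{\circ}.94\ldots$ rather than $19^{\circ}.96\ldots$ (the contradiction survives either way), and the choice $p=1.409$ is actually forced by the far tighter estimates in the ten-plus-ten case, not by the roughly one-degree margin here.
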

\begin{proof}
Part a. follows easily from $10\cdot \Phi_p\left(p,1+p\right)+2\cdot \Phi_p\left(1.2,1+p\right)>10\cdot 31^{\circ}.25+2\cdot 24^{\circ}.57=361^{\circ}.64$, a contradiction.

For part b. we need to estimate the sum
\begin{equation*}
\mathcal{A}=n_{11}\cdot \Phi_p\left(p,1+p\right)+n_{hh}\cdot \Phi_q\left(1.12,1+q\right)+n_{1h}\cdot \Phi_p\left(1.12,1+p\right),
\end{equation*}
where $\Phi_p\left(p,1+p\right)=31^{\circ}.25\ldots, \Phi_q\left(1.12,1+q\right)= 16^{\circ}.40\ldots, \Phi_p\left(1.12,1+p\right)=19^{\circ}.94\ldots $, and $[n_{11},n_{hh},n_{1h}]$ equals either $[9, 0, 4]$, or $[10,1,2]$.
The minimum of $\mathcal{A}=361^{\circ}.09\ldots$ is reached when $[n_{11},n_{hh},n_{1h}]=[9, 0, 4]$, a contradiction.
\end{proof}

\begin{fact}\label{f118b}
It is impossible to have $11$ points of weight $1$ all the annulus $1.5 \le \rho \le 1+p$.
\end{fact}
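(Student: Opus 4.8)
The plan is to imitate verbatim the annulus-packing argument used throughout Sections~4--10. Suppose, for contradiction, that $11$ points of weight $1$ all lie in the annulus $1.5 \le \rho \le 1+p$. Any two of them are at mutual distance at least $p = 1.409$, one of the standing hypotheses on points of weight $1$, so I may invoke Lemma~\ref{BEsoss} (equivalently Remark~\ref{one}, since both points have weight $1$) with $r = 1.5$, $R = 1+p$, $d = p$; note the hypothesis $0 \le R - d \le r \le R$ holds, because $R - d = (1+p) - p = 1 \le 1.5 = r \le 2.409 = R$. Hence, after sorting the $11$ points in circular order about the origin $O$, each of the $11$ angles subtended by a pair of consecutive points is at least $\Phi_p(1.5, 1+p)$.

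The next step is to evaluate this angle. With $p = 1.409$ and $1+p = 2.409$, the first argument of the minimum in~(\ref{phi}) is
\[
\arccos\frac{(1+p)^2 + 1.5^2 - p^2}{2(1+p)(1.5)} = \arccos\frac{(1+2p) + 2.25}{3(1+p)} = \arccos\frac{6.068}{7.227} = \arccos(0.83963\ldots) = 32^{\circ}.89\ldots,
\]
while the second argument is $2\arcsin\frac{p}{2(1+p)} = 2\arcsin(0.29244\ldots) = 34^{\circ}.01\ldots$. Therefore $\Phi_p(1.5, 1+p) = 32^{\circ}.89\ldots$, the $\arccos$ branch being the active one. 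Summing the $11$ consecutive angles then gives $11\cdot\Phi_p(1.5,1+p) > 11\cdot 32^{\circ}.89 = 361^{\circ}.8 > 360^{\circ}$, which is impossible since these angles partition a full turn about $O$. This contradiction finishes the proof.

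There is essentially no obstacle here: this is one of the ``easy'' facts, on the same footing as Lemmas~\ref{120} and~\ref{027}. The only things worth double-checking are the numerical value of $\Phi_p(1.5,1+p)$ — in particular that the $\arccos$ branch, not the $\arcsin$ branch, is the smaller one — and that multiplying by $11$ clears $360^{\circ}$ with a comfortable margin (here roughly $1^{\circ}.8$, so the bound is robust to small rounding and does not depend delicately on the exact value $p = 1.409$, unlike the $10$-and-$10$ case).
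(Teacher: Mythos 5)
Your proof is correct and is exactly the paper's argument: the paper's entire proof of Fact~\ref{f118b} is the one-line observation that $11\cdot \Phi_p(1.5,1+p) =361^{\circ}.88\ldots>360^{\circ}$, and your computation of $\Phi_p(1.5,1+p)\approx 32^{\circ}.89\ldots$ (with the $\arccos$ branch active) matches this value. Nothing further is needed.
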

\begin{proof}
This is immediate since $11\cdot \Phi_p(1.5,1+p) =361^{\circ}.88\ldots$, a contradiction.
\end{proof}

\begin{fact}\label{f118c}
It is impossible to have $1$ point of weight $1$ in the annulus $p \le \rho \le 1.5$ and $8$ of weight $1/2$ in $1\le \rho \le 1.2$ such that
$7$ of them lie in the annulus $1\le \rho\le 1.12$.
\end{fact}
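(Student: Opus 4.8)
The plan is the standard one used throughout this paper: assume such a placement of the $9$ points exists, list them in circular order around the origin $O$, and obtain a contradiction by showing that the $9$ angles subtended at $O$ by consecutive pairs add up to more than $360^{\circ}$.

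First I would fix notation. Let $W$ be the point of weight $1$ (lying in $p\le\rho\le 1.5$), let $R_1,\dots,R_7$ be the seven points of weight $1/2$ lying in the inner annulus $1\le\rho\le 1.12$, and let $B$ be the eighth point of weight $1/2$, about which we assume only $1\le\rho_B\le 1.2$. Next I would record three angle estimates, each a direct application of Lemma~\ref{BEsoss} together with Remark~\ref{one} or Lemma~\ref{two} and the values in (\ref{pq}): an angle between $W$ and any point of weight $1/2$ is at least $\Phi_p(1,1.5)=56^{\circ}.02\ldots$, since both points then lie in the annulus $1\le\rho\le 1.5$, are at least $p$ apart, and $W$ has weight $1$ (Remark~\ref{one}); an angle between two of the $R_i$ is at least $\Phi_q(1,1.12)=36^{\circ}.94\ldots$ by Lemma~\ref{two} (using $1\le 1+q$); and an angle between $B$ and some $R_i$ is at least $\Phi_q(1,1.2)=34^{\circ}.40\ldots$, again by Lemma~\ref{two}. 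Note that in the circular ordering exactly two of the nine consecutive-pair angles are incident to $W$ and exactly two are incident to $B$.

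Then I would split into two cases according to whether $W$ and $B$ are consecutive in the circular ordering. If they are not, the two edges at $W$ and the two edges at $B$ are four distinct edges, each of the first or third type, and the remaining five edges join two of the $R_i$; the angle sum is therefore at least $2\cdot 56^{\circ}.02+2\cdot 34^{\circ}.40+5\cdot 36^{\circ}.94>365^{\circ}$. If $W$ and $B$ are consecutive, then one edge is $WB$ (angle at least $\Phi_p(1,1.5)$, as $W$ has weight $1$), one further edge at $W$ goes to some $R_i$, one further edge at $B$ goes to some $R_i$, and the remaining six edges are of $R_i R_j$ type; the sum is at least $2\cdot 56^{\circ}.02+34^{\circ}.40+6\cdot 36^{\circ}.94>368^{\circ}$. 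In either case the total exceeds $360^{\circ}$, which is the desired contradiction.

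The only point needing a little care is checking that these two cases genuinely exhaust all circular orderings — they do, since there is a unique point of each of the types $W$ and $B$ — and that the worst arrangement has been identified correctly; the numerical values of $\Phi_p(1,1.5)$, $\Phi_q(1,1.12)$, $\Phi_q(1,1.2)$ follow routinely from (\ref{phi}). Unlike the $10+10$ case, the margin here is several degrees, so no fine tuning of $p$ and $q$ is required.
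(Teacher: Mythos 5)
Your proposal is correct and is essentially the paper's own argument: the paper colors the seven inner half-weight points red and the remaining two points (your $W$ and $B$) blue, splits on whether the two blue points are consecutive, and uses exactly your bounds $\Phi_p(1,1.5)$, $\Phi_q(1,1.2)$, $\Phi_q(1,1.12)$ with the same counts $(2,2,5)$ and $(2,1,6)$, obtaining sums of about $365^{\circ}$ and $368^{\circ}$. Your numerical values and case exhaustion check out, so nothing further is needed.
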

\begin{proof}
Color $7$ points of weight $1/2$ that lie in $1\le \rho \le 1.12$ red, and the remaining two points blue.
If the blue points are not consecutive, then the angle sum is at least $5\Phi_q(1,1.12)+2\Phi_p(1,1.5)+2\Phi_q(1,1.2)=365^{\circ}.72\ldots$.
Otherwise, the angle sum is at least $6\Phi_q(1,1.12)+2\Phi_p(1,1.5)+\Phi_q(1,1.2)=368^{\circ}.11\ldots$. In either case, we obtain a contradiction.
\end{proof}

\begin{lemma}\label{118}
It is impossible to have $11$ points of weight $1$ in $p\le \rho \le 1+p$ and $8$ points of weight $1/2$ in $1\le \rho \le 1+p$.
\end{lemma}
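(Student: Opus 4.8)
The plan is to argue by contradiction, assuming such a configuration exists, and then to pin down the locations of the points by applying Facts \ref{f118a}--\ref{f118c} in sequence until we land in a situation that has already been ruled out. The structure mirrors the proofs of Lemmas \ref{814} and \ref{912}: each fact shrinks the annulus available to the points of weight $1/2$, or forces a point of weight $1$ close to the origin, until all the facts together become incompatible.

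First I would invoke Fact \ref{f118a}(a). Since every point of weight $1$ lies in $p\le\rho\le 1+p$ and $p=1.409>1.2$, all $11$ points of weight $1$ in fact lie in the annulus $1.2\le\rho\le 1+p$; so if even a single point of weight $1/2$ also lay in that annulus, Fact \ref{f118a}(a) would be violated. Hence all $8$ points of weight $1/2$ must lie in $1\le\rho\le 1.2$. Next I would apply Fact \ref{f118a}(b): two points of weight $1/2$ cannot both lie in $1.12\le\rho\le 1+p$, and since all $8$ of them lie in $1\le\rho\le 1.2\subseteq [1,1+p]$, at most one of them can lie in $1.12\le\rho\le 1.2$. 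Therefore at least $7$ of the $8$ points of weight $1/2$ lie in $1\le\rho\le 1.12$.

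Then I would bring in Fact \ref{f118b}: the $11$ points of weight $1$ cannot all lie in $1.5\le\rho\le 1+p$, so at least one point of weight $1$ lies in the annulus $p\le\rho\le 1.5$. At this stage we have produced exactly the configuration that Fact \ref{f118c} forbids: one point of weight $1$ in $p\le\rho\le 1.5$, all $8$ points of weight $1/2$ in $1\le\rho\le 1.2$, and at least $7$ of them in $1\le\rho\le 1.12$. This is the desired contradiction, and it completes the proof of Lemma \ref{118}, and with it (together with Lemmas \ref{120}--\ref{1010}) the proof of Theorem \ref{weight}.

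I do not expect a genuine obstacle in the lemma itself: once Facts \ref{f118a}--\ref{f118c} are available, it is a short chain of localizations with no arithmetic. The only place where care is needed is inside the supporting facts, and in particular inside Fact \ref{f118c}, whose bound requires splitting into the two circular orderings according to whether the two ``blue'' points are consecutive; but the relevant base angles $\Phi_q(1,1.12)$, $\Phi_p(1,1.5)$, and $\Phi_q(1,1.2)$ are all large enough that both resulting angle sums exceed $360^\circ$ with a comfortable margin, so even that step is routine to verify.
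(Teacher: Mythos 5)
Your proposal is correct and follows essentially the same route as the paper's own proof: Fact \ref{f118a} confines all eight weight-$1/2$ points to $1\le\rho\le 1.2$ with at least seven in $1\le\rho\le 1.12$, Fact \ref{f118b} forces a weight-$1$ point into $p\le\rho\le 1.5$, and Fact \ref{f118c} rules out the resulting configuration. The extra observation that the weight-$1$ points lie in $1.2\le\rho\le 1+p$ is harmless but unnecessary, since Fact \ref{f118a} applies directly under the lemma's hypothesis.
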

\begin{proof}
Assume it is possible. Fact \ref{f118a} implies that all $8$ points of weight $1/2$ are in $1\le \rho \le 1.2$, and at least $7$ of them are in $1\le \rho \le 1.12$. On the other hand, fact \ref{f118b} guarantees the existence of a point of weight $1$ in the annulus $p\le \rho \le 1.5$.  However, fact \ref{f118c} shows that such combination is impossible, so the proof is complete.
\end{proof}

We have finally reached the end of the road. The only thing left to do is to combine the Lemmata \ref{120}, \ref{027}, \ref{224}, \ref{421}, \ref{520}, \ref{618kelvin}, \ref{716}, \ref{814}, \ref{912}, \ref{1010} and \ref{118} to complete the proof of Theorem \ref{weight}.

\section{\bf Conclusions and directions for future research}
In this paper we presented a small improvement on the upper bound of the size of the Euclidean sphere of influence graph.
We feel that we reached the limits of the present method. For further progress, new ideas seem to be needed.
One may consider defining a more refined weight function, with points of several weights rather than just two.
This may very well work, but there is a price to be paid: the resulting case analysis is going to be even more tedious.

Many interesting questions remain unanswered. The {\it thickness} of a graph $G$ is the minimum number of planar graphs whose union is $G$. Allgeier and Kubicki \cite{AK} conjectured that the thickness of SIGs is bounded. The complete graph $K_8$ is an OSIG and has thickness $2$, but $K_9$ has thickness $3$. At the present time, no SIG is known with thickness more than $2$.

According to Toussaint \cite{toussaint14}, sphere of influence graphs have applications in low-level computer vision, cluster analysis, pattern recognition, geographic information systems, and even marketing. He provides an extensive list of references.

This is why we believe that the study of sphere of influence graphs is both interesting and potentially useful.

\thispagestyle{empty}
{\footnotesize{
}}

\begin{thebibliography}{References}

\bibitem{AH} D. Avis, J. Horton, Remarks on the sphere of influence graph. {\it Discrete geometry and convexity (New York, 1982)}, 323--327, Ann. New York Acad. Sci., {\bf 440}, {\it New York Acad. Sci., New York}, 1985.

\bibitem{AK} B. M. Allgeier, G. K. Kubicki, Thickness of sphere of influence graphs, {\it Research Experiences for Graduate Students in Combinatorics}, 2007.

\bibitem{BE} P. Bateman, P. Erd\H{o}s, Geometrical extrema suggested by a lemma of Besicovitch. {\it American Mathematical Monthly} {\bf 58} (1951), 306--314.

\bibitem{GPS}L. Guibas, J. Pach, M. Sharir, Sphere of influence graphs in higher dimensions. {\it Intuitive geometry (Szeged, 1991)}, 131--137, Colloq. Math. Soc. J\'{a}nos Bolyai, {\bf 63}, {\it North-Holland, Amsterdam}, 1994.

\bibitem{HJLM} F. Harary, M. S. Jacobson, M. J. Lipman, F. R. McMorris, Abstract sphere-of-influence graphs. Graph-theoretic models in computer science, II (Las Cruces, NM, 1988--1990). {\it Mathematical and Computer Modelling} {\bf 17} (1993), no. 11, 77–-83.

\bibitem{JLM} M. S. Jacobson, M. J. Lipman, F. R. McMorris, Trees that are sphere-of-influence graphs. {\it Applied Mathematics Letters} {\bf 8} (1995), no. 6, 89-–93.

\bibitem{KK} A. E. K\'{e}zdy, G. K. Kubicki, $K_{12}$ is not a closed sphere-of-influence graph. {\it Intuitive geometry (Budapest, 1995)}, 383–-397, Bolyai Soc. Math. Stud., {\bf 6}, J\'{a}nos Bolyai Math. Soc., Budapest, 1997.

\bibitem{MQ} T. S. Michael, T. Quint, Sphere of influence graphs: Edge density and clique size, {\it Mathematical and Computer Modelling}, {\bf 20} (1994), no. 7, 19--24.

\bibitem{R} E. R. Reifenberg, A problem on circles. {\it Mathematical Gazette} {\bf 32} (1948), 290--292.

\bibitem{soss} M. Soss, On the size of the sphere of influence graph, School of Computer Science, McGill University, Montr\'{e}al, Cananda, July 1998, Masters Thesis.

\bibitem{toussaint80} G. T. Toussaint, Pattern recognition and geometric complexity, {\it Proceedings at the $5^{th}$ International Conference on Pattern recognition}, 1--24, Miami Beach, 1980.

\bibitem{toussaint14} G. T. Toussaint, The sphere of influence graph : theory and applications. {\it International Journal of Information Technology \& Computer Science}, {\bf 14} (2014), no. 2.

\end{thebibliography}
\end{document}